\documentclass[11pt]{article}
\usepackage{graphicx}
\usepackage{latexsym}
\usepackage{amsmath}
\usepackage{amssymb}
\usepackage{amsthm}
\usepackage{enumitem}
\usepackage{caption}
\usepackage{float}
\usepackage{mathpazo}
\usepackage{hyperref}
\usepackage{tikz}
\usepackage{multicol}
\usepackage{mathtools}
\usepackage{authblk} 
\def \N {{\mathbb N}}
\def \Z {{\mathbb Z}}
\def \G {{\mathcal G}}

\def \I {{\mathcal I}}

\def \E {{\mathcal E}}

\def \V {{\mathcal V}}

\def \P {{\mathcal P}}
\def \v {{\mathbf v}}

\def \Ga {{\Gamma}}
\def \la {{\lambda}}

\def \u {{\mathfrak u }}

\newtheorem{theorem}{Theorem}[section]
\newtheorem{cor}[theorem]{Corollary}
\newtheorem{lemma}[theorem]{Lemma}
\newtheorem{pro}[theorem]{Proposition}
\newtheorem{rem}[theorem]{Remark}

\newtheorem{definition}[theorem]{Definition}

\newtheorem{ex}[theorem]{Example}

\usepackage[colorinlistoftodos,prependcaption,textsize=tiny]{todonotes}
\usepackage{xargs} % Use more than one optional parameter in a new commands
\newcommandx{\unsure}[2][1=]{\todo[linecolor=red,backgroundcolor=red!25,bordercolor=red,#1]{#2}}
\newcommandx{\change}[2][1=]{\todo[linecolor=blue,backgroundcolor=blue!25,bordercolor=blue,#1]{#2}}
\newcommandx{\newidea}[2][1=]{\todo[linecolor=green,backgroundcolor=green!25,bordercolor=green,#1]{#2}}
\newcommandx{\improvement}[2][1=]{\todo[linecolor=pink,backgroundcolor=pink!25,bordercolor=pink,#1]{#2}}
\usepackage{setspace}
\usepackage{natbib}
\usepackage[english]{babel}
\usepackage[euler]{textgreek}
\usepackage{stmaryrd}
\usepackage{mathtools}
\usepackage{pgf,tikz}
\usepackage{tikz}
\usepackage{tkz-graph}
\usetikzlibrary{positioning}
\usepackage{array}
\date{}

\title{On the Generating Graph of Finite Abelian Groups}
\author[1]{Kavita Samant}
\author[2]{A. Satyanarayana Reddy}
\affil[1,2]{{\normalsize Department of Mathematics\\Shiv Nadar Institution of Eminence, Delhi-NCR, India}\\ 
E-mail id(s): ks299@snu.edu.in; satya.a@snu.edu.in}
\date{}
\begin{document}
\maketitle
\begin{abstract}
    The generating graph $\Gamma(G)$ of a group $G$ is the graph whose vertex 
    set is $G$, where two distinct vertices are adjacent if and 
    only if they generate $G$. In this paper, we systematically study the 
    structure of generating graphs of finite abelian groups (non-cyclic)  and 
    determine the set of all generating pairs. Moreover, we give some structural characterizations, in particular, we determine conditions under which $\Gamma(G)$ is 
    regular, characterize when the isolated vertices form a subgroup, and 
    establish necessary and sufficient conditions for two non-isomorphic 
    finite abelian groups $G$ and $H$ to satisfy $\Gamma(G)\cong 
    \Gamma(H)$. Furthermore, 
    we compute the spectra of the adjacency and Laplacian matrices of these 
    graphs.
\end{abstract}
\textbf{Keywords.} Generating graphs; Finite abelian groups; Kronecker product; Adjacency spectrum; Laplacian spectrum.\\
\textbf{Mathematics Subject Classification.} 05C25, 05C50, 20D60.
  
\section{Introduction}

Graphs associated with groups provide a way to study algebraic structures using tools from graph theory. In these constructions, the elements of a group, the set of subgroups, the set of conjugacy classes, etc., form the vertices, and adjacency is defined through some property of the group. Examples include Cayley graphs, commuting graphs, generating graphs, power graphs, and prime graphs. Each graph highlights different aspects of the underlying group’s structure. We refer to the paper~\cite{cameron} by P.~J.~Cameron for further background on graphs defined on groups. In this paper, we study generating graphs of groups. For a given group $G,$ the generating graph is denoted by $\Ga(G),$ whose vertex set is $G$, and any two distinct vertices are adjacent if they generate $G$. These graphs are interesting mainly for \emph{2-generated groups} (groups that can be generated by two elements), as otherwise the graph is empty. 

In the study of 2-generated groups, the generating graphs are introduced (explicitly) and investigated in the papers~\cite{some,clique}. For certain classes of groups, such as finite simple groups \cite{Lieback,Guralnick}, solvable groups~\cite{generating,diameter}, and nilpotent groups~\cite{Connectivity}, their graph properties are studied extensively in the literature. For instance, for finite solvable groups, it is known that the subgraph $\Delta(G),$ obtained by removing isolated vertices from $\Gamma(G)$, is connected. Moreover, its diameter satisfies $\mathrm{diam}(\Delta(G)) \leq 3$, and this bound is sharp, although in some important cases, including all finite abelian groups, the bound improves to $\mathrm{diam}(\Delta(G)) \leq 2$. More such graph properties for abelian groups can be seen in the paper~\cite{Connectivity}. The generating graph of an infinite abelian group and its connectivity has been explored in the paper~\cite{infinite}.

In recent years, spectral theory has gained interest in the study of graphs associated with algebraic structures. Spectral properties often link the algebraic nature of the group with combinatorial and geometric features of the graph. However, like other group associated graphs, the spectral study of generating graphs is not extensively studied.  In this work, we study the generating graph of finite abelian groups from both a structural and spectral viewpoint. Previously, the spectral properties of the generating graphs have been thoroughly explored in the papers~\cite{Me1,Me2,Me} for dihedral groups, dicyclic groups, and finite cyclic groups, respectively.  The study is significant because it bridges group theory with spectral graph theory, with potential applications in network design, algebraic combinatorics, and in understanding the relationship between a group's algebraic structure and its graph-theoretic representations.

\paragraph*{Our contribution:} In this paper, we study the generating graph $\Gamma(G)$ of finite abelian 
groups $G$ (non-cyclic). We explicitly determine the set of generating pairs, describe the graph 
structure.

A central theme is that abelian $p$-groups serve as the fundamental building 
blocks: the generating graph of any finite abelian group is completely 
determined by the generating graphs of its Sylow subgroups. This mirrors the 
classical primary decomposition of abelian groups, and reduces most questions 
about $\Gamma(G)$ to the $p$-group case.

A natural question is which elements fail to generate, that is, which 
vertices are isolated in $\Gamma(G)$. It is straightforward to see 
that every non-identity element of the Frattini subgroup $\Phi(G)$ is 
isolated. However, the converse fails in general. For finite abelian groups that are not $p$-groups, we show that the isolated vertices occur outside 
$\Phi(G)$. We explicitly identify all isolated vertices and characterize when they form a subgroup of $G$. Moreover, we determine necessary and sufficient conditions for $\Gamma(G)$ to be regular, and show that regularity is completely controlled by the cyclic or non-cyclic nature 
of the Sylow subgroups of $G$.
Moreover, we investigate the graph isomorphism for abelian groups: we basically determine necessary and 
sufficient conditions for two non-isomorphic finite abelian groups $G$ and $H$ 
to satisfy $\Gamma(G) \cong \Gamma(H)$. 
Lastly, we compute the spectra of the adjacency and Laplacian matrices.
\\
\vspace{0.1cm}\\
This paper is organized into six sections. In Section~\ref{sec:2}, we review some fundamental results on abelian groups and some important notions of graphs. In Section~\ref{sec:3}, we determine the set of all generating pairs of the groups. In Section~\ref{sec:4}, we review and generalize a few definitions and results on the generating graph of cyclic groups. Furthermore, in Section~\ref{sec:5} , we investigate the structure of the generating graph of abelian groups and present some structural characterizations. Lastly, in Section~\ref{sec:6}, we determine the adjacency spectrum and Laplacian spectrum of the associated graph. 

\section{Preliminaries}\label{sec:2}
Unless otherwise specified, all groups under consideration are finite. 

A \emph{minimal generating set} of a group $G$ is a generating set $X$ 
such that no proper subset of $X$ generates $G$. In general, a group 
can have minimal generating sets of different sizes. However, for 
$p$-groups this size is an invariant, determined by the Frattini 
subgroup. The \emph{Frattini subgroup} $\Phi(G)$ of a group $G$ is the intersection of all maximal subgroups of $G$. 
\begin{theorem}[Theorem 5.48,~\cite{rotman}]
    Let $G$ be a finite $p$-group. Then $G/\Phi(G)$ is isomorphic to a finite dimensional vector space over the finite field $\mathbb{F}_p.$
\end{theorem}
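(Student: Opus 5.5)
The plan is to show that $\Phi(G)$ is a normal subgroup such that $G/\Phi(G)$ is abelian and every non-identity element has order $p$. An abelian group of exponent $p$ is automatically an $\mathbb{F}_p$-vector space, and finiteness gives finite dimension. So everything reduces to one claim: every maximal subgroup $M$ of $G$ is normal of index $p$.

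First, $\Phi(G)$ is normal (indeed characteristic) in $G$. Conjugation permutes the maximal subgroups, so it fixes their intersection.

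The key step is the claim about maximal subgroups. Finite $p$-groups are nilpotent, and in a nilpotent group every proper subgroup is properly contained in its normalizer. I would prove this via the upper central series: pick the least $i$ with $Z_i(G)\not\le M$, and note that $[Z_i(G),M]\le Z_{i-1}(G)\le M$, so $Z_i(G)$ normalizes $M$. Applying this with $M$ maximal gives $N_G(M)=G$, so $M\trianglelefteq G$.

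Next, $G/M$ is a $p$-group with no nontrivial proper subgroups, by maximality and the correspondence theorem. A nontrivial $p$-group has subgroups of every order dividing its order, by Cauchy or Sylow. Hence $G/M$ has no subgroups other than itself and the trivial one, which forces $|G/M|=p$, so $G/M\cong \mathbb{Z}_p$.

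Now consider the homomorphism
\[
G\longrightarrow \prod_{M} G/M ,
\]
where $M$ runs over the maximal subgroups of $G$ and each coordinate is the natural projection. Its kernel is exactly $\Phi(G)$. Therefore $G/\Phi(G)$ embeds in a product of copies of $\mathbb{Z}_p$. In particular it is abelian and $x^p=1$ for every $x\in G/\Phi(G)$. Equivalently, $G'\le\Phi(G)$ and $G^p\le\Phi(G)$.

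Finally, on the abelian group $V=G/\Phi(G)$ define scalar multiplication by $\bar k\cdot v=v^k$ for $k\in\mathbb{Z}$. This is well defined modulo $p$ because $v^p=1$. The vector space axioms follow from the rules for exponents in an abelian group, written additively. Since $V$ is finite, it has finite dimension $d$ with $|V|=p^d$.

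The only nontrivial ingredient is normalizers growing in nilpotent groups. If one prefers to avoid the upper central series, one can instead argue by induction on $|G|$ using a central element of order $p$, which exists because $Z(G)\neq 1$ by the class equation.
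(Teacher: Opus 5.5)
Your proof is correct. The paper gives no argument of its own and only cites Rotman (Theorem 5.48), whose standard proof is essentially yours: maximal subgroups of a finite $p$-group are normal of index $p$ because normalizers grow in nilpotent groups, so $G'$ and all $p$th powers lie in $\Phi(G)$, making $G/\Phi(G)$ elementary abelian; your embedding $G/\Phi(G)\hookrightarrow\prod_M G/M$ simply repackages the inclusion $G'G^p\le\Phi(G)$.
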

\begin{cor}[\cite{rotman}]\label{cor:imp}
    Let $G$ be a finite $p$-group. Then $\Phi(G)$ is contained in every 
    maximal subgroup of $G$, and there is a one-to-one correspondence 
    between maximal subgroups of $G$ and maximal subgroups of $G/\Phi(G)$. 
    In particular, the number of maximal subgroups of $G$ equals the number 
    of subgroups of index $p$ in the elementary abelian group $G/\Phi(G)$.
    \end{cor}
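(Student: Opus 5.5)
The plan is to argue directly from the definition of $\Phi(G)$ as the intersection of all maximal subgroups, and then pass to the quotient using the correspondence theorem together with the preceding theorem that $V=G/\Phi(G)$ is a finite dimensional $\mathbb{F}_p$-vector space.

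\emph{Step 1: $\Phi(G)$ lies in every maximal subgroup.} This is immediate from the definition, since $\Phi(G)=\bigcap M$ over all maximal subgroups $M$ of $G$. For a nontrivial finite group maximal subgroups exist; if $G$ is trivial, then $\Phi(G)=G$ and everything is vacuous.

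\emph{Step 2: the one-to-one correspondence.} Note first that $\Phi(G)$ is normal (indeed characteristic), because any automorphism permutes the maximal subgroups. By the correspondence theorem, $H\mapsto H/\Phi(G)$ is an inclusion-preserving bijection between subgroups of $G$ containing $\Phi(G)$ and subgroups of $G/\Phi(G)$. Inclusion-preserving bijections between lattices preserve maximality. By Step 1, every maximal subgroup of $G$ contains $\Phi(G)$, so the maximal subgroups of $G$ are exactly the maximal elements of the interval $[\Phi(G),G]$ below $G$. Hence they correspond bijectively to the maximal subgroups of $G/\Phi(G)$.

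\emph{Step 3: maximal subgroups of the quotient are exactly the index-$p$ subgroups.} Write $V\cong \mathbb{F}_p^d$. Subgroups of $V$ are precisely its $\mathbb{F}_p$-subspaces, because closure under addition gives closure under scalar multiplication by $\{0,1,\dots,p-1\}$. A proper subspace is maximal if and only if it has codimension $1$, that is, index $p$. Conversely, any subgroup of index $p$ is maximal, since by Lagrange there is no room for a subgroup strictly between it and $V$. Combining this with Step 2 gives the count: the number of maximal subgroups of $G$ equals the number of index-$p$ subgroups of $G/\Phi(G)$, namely $(p^d-1)/(p-1)$, though the statement does not require the explicit number.

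I expect no serious obstacle. The only point needing care is Step 2, specifically checking that maximality transfers under the correspondence. This relies on every maximal subgroup containing $\Phi(G)$, which is what makes the bijection land on all maximal subgroups rather than only some of them.
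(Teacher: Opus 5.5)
Your proof is correct, and it is the standard argument behind the paper's citation to Rotman; the paper itself gives no proof. You use that $\Phi(G)$ lies in every maximal subgroup by definition, that the correspondence theorem transfers maximality, and that $G/\Phi(G)$ is elementary abelian (the preceding theorem), so its maximal subgroups are exactly the hyperplanes, i.e.\ the index-$p$ subgroups. The only genuinely $p$-group-specific input, namely that maximal subgroups of a finite $p$-group are normal of index $p$, sits inside that preceding theorem, which you are entitled to invoke here.
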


\begin{theorem}[Burnside's Basis Theorem, Theorem~5.50,~\cite{rotman}]
\label{thm:burnside}
Let $G$ be a finite $p$-group. Then any minimal generating set of $G$ has 
cardinality $\dim(G/\Phi(G))$, and $x \in G$ belongs to some minimal 
generating set if and only if $x \notin \Phi(G)$.
\end{theorem}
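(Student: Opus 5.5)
The plan is to reduce everything to linear algebra over $\mathbb{F}_p$ via the quotient map $\pi\colon G\to V:=G/\Phi(G)$. By the preceding theorem, $V$ is a finite-dimensional $\mathbb{F}_p$-vector space; write $d=\dim V$.

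\textbf{Step 1: the basic fact.} I will first show that for $X\subseteq G$,
\[
\langle X\rangle=G \iff \pi(X)\ \text{spans}\ V .
\]
If $\langle X\rangle\neq G$, then $\langle X\rangle$ lies in some maximal subgroup $M$, since $G$ is finite. By Corollary~\ref{cor:imp}, $\Phi(G)\le M$ and $M/\Phi(G)$ is a maximal subgroup of $V$, i.e.\ a hyperplane. Hence $\pi(X)$ lies in a proper subspace.

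Conversely, suppose $\langle X\rangle=G$ but $\pi(X)$ spans a proper subspace $W$. Extend $W$ to a hyperplane $H\supseteq W$. By the correspondence in Corollary~\ref{cor:imp}, $\pi^{-1}(H)$ is a maximal subgroup of $G$ containing $X$, a contradiction. This fact is the main engine and essentially restates the non-generator property of $\Phi(G)$, but I will write it out explicitly, being careful to invoke the correspondence so as to avoid circularity.

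\textbf{Step 2: minimal generating sets are bases.} Let $X$ be a minimal generating set. By Step 1, $\pi(X)$ spans $V$. Suppose $\pi$ were not injective on $X$, or $\pi(X)$ were linearly dependent. Then some $x\in X$ would satisfy $\pi(x)\in\operatorname{span}\pi(X\setminus\{x\})$. Step 1 would then give $\langle X\setminus\{x\}\rangle=G$, contradicting minimality. So $\pi|_X$ is injective and $\pi(X)$ is a basis, which gives $|X|=d$.

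\textbf{Step 3: which elements lie in a minimal generating set.} If $x\in\Phi(G)$, then $\pi(x)=0$, which cannot belong to a basis; so by Step 2, $x$ lies in no minimal generating set. If $x\notin\Phi(G)$, then $\pi(x)\neq0$, so extend it to a basis $\pi(x),v_2,\dots,v_d$ of $V$ and choose lifts $x_i\in G$ with $\pi(x_i)=v_i$. By Step 1, $\{x,x_2,\dots,x_d\}$ generates $G$. It is minimal because removing any element leaves a set whose image spans only a proper subspace, so by Step 1 it no longer generates.

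The only delicate point is Step 1, specifically making sure that the correspondence between maximal subgroups of $G$ and hyperplanes of $V$ is used correctly. After that, everything is standard linear algebra.
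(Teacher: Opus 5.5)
Your proof is correct, and it is essentially the standard proof of this textbook result; the paper gives no proof of its own and only cites Rotman. Your Step~1 is the non-generator property of $\Phi(G)$, and Steps~2 and~3 then reduce everything to bases of the $\mathbb{F}_p$-space $G/\Phi(G)$. As a minor simplification, the implication ``$\langle X\rangle=G\Rightarrow\pi(X)$ spans $V$'' needs no hyperplane argument, since $\pi(\langle X\rangle)=\langle\pi(X)\rangle$ and the subgroups of $V$ are exactly its subspaces.
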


Now we state some fundamental results for finite abelian groups. 

 \begin{theorem}[Theorem 6.9,~\cite{rotman}]
Every finite abelian group $G$ is a direct sum of cyclic groups.
\end{theorem}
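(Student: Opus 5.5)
We prove that every finite abelian group $G$ is a direct sum of cyclic groups by strong induction on $|G|$. The argument has two stages: first reduce to $p$-groups, then split off a cyclic summand of maximal order.

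\textbf{Reduction to $p$-groups.} Write $|G| = p_1^{a_1}\cdots p_k^{a_k}$ and set $G_{p_i} = \{x \in G : p_i^{a_i}x = 0\}$. Each $G_{p_i}$ is a subgroup because $G$ is abelian. Put $m_i = |G|/p_i^{a_i}$. Since $\gcd(m_1,\dots,m_k)=1$, there are integers $u_i$ with $\sum u_i m_i = 1$. Then every $x \in G$ satisfies $x = \sum u_i m_i x$, and $u_i m_i x \in G_{p_i}$, so $G = \sum_i G_{p_i}$. The sum is direct: if $\sum x_i = 0$ with $x_i \in G_{p_i}$, multiplying by $m_j$ kills every $x_i$ with $i \neq j$. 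This leaves $m_j x_j = 0$, and because $\gcd(m_j,p_j^{a_j})=1$ we get $x_j = 0$. So it suffices to treat a finite abelian $p$-group.

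\textbf{Splitting a cyclic summand.} Let $G$ be a finite abelian $p$-group and pick $a \in G$ of maximal order $p^n$. The key claim is that $\langle a\rangle$ has a complement, i.e.\ $G = \langle a\rangle \oplus H$ for some subgroup $H$. Granting this, $|H| < |G|$ whenever $a \neq 0$, so by induction $H$ is a direct sum of cyclic groups, and hence so is $G$.

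To prove the claim, take $H$ maximal among subgroups with $H \cap \langle a\rangle = 0$, and suppose $\langle a\rangle + H \neq G$. Choose $x \notin \langle a\rangle + H$. Since $p^k x = 0$ for large $k$, some multiple $p^j x$ lies in $\langle a\rangle + H$. Replacing $x$ by a suitable $p$-power multiple, we may assume $x \notin \langle a\rangle + H$ but $px \in \langle a\rangle + H$. Write $px = ra + h$.

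The crucial step is to show $p \mid r$, and this is where maximality of the order of $a$ is used. We have $0 = p^n x = p^{n-1}ra + p^{n-1}h$. Both $p^{n-1}ra \in \langle a\rangle$ and $p^{n-1}h \in H$, and $H \cap \langle a\rangle = 0$, so $p^{n-1}ra = 0$. Since $a$ has order $p^n$, this forces $p \mid r$. Write $r = ps$ and set $y = x - sa$. Then $py = h \in H$, and $y \notin \langle a\rangle + H$.

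Now $H' = H + \langle y\rangle$ strictly contains $H$. By maximality of $H$, there is a nonzero element $ta = h_1 + cy$ in $H' \cap \langle a\rangle$. If $p \mid c$, then $cy \in H$, so $ta \in H \cap \langle a\rangle = 0$, contradicting $ta \neq 0$. If $p \nmid c$, then $c$ is invertible modulo $p$, and since $py \in H$ this lets us express $y$ as an element of $\langle a\rangle + H$, a contradiction. Therefore $G = \langle a\rangle \oplus H$.

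The main obstacle is this complement step, in particular the divisibility $p \mid r$. The reduction to $p$-groups and the final induction are routine.
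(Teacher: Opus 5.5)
Your proof is correct and complete. The paper gives no proof of this statement: it quotes it from Rotman (Theorem~6.9) as background, so your argument is a self-contained replacement for the citation rather than a rival to an in-paper proof. You follow one of the classical routes. First you obtain the primary decomposition $G=\bigoplus_i G_{p_i}$ from a B\'ezout identity for the cofactors $m_i=|G|/p_i^{a_i}$. Then you split off $\langle a\rangle$, for $a$ of maximal order, by taking $H$ maximal with $H\cap\langle a\rangle=0$. Two facts you use silently both hold. In a $p$-group, maximality of the order $p^n$ gives $p^n x=0$ for every $x\in G$, and this is what yields $p\mid r$. The splitting step needs only that every element of $G_{p_i}$ has $p_i$-power order, and the subgroup $H$ inherits this, so the induction closes. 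Another standard argument decomposes a quotient such as $G/\langle a\rangle$ by induction and lifts the cyclic generators back to elements of the same order. Your construction of the complement by maximality avoids that lifting lemma and uses nothing beyond finiteness and B\'ezout. It also proves slightly more than the quoted statement, namely a decomposition into cyclic groups of prime-power order grouped by primes. That is the form the paper actually works with in Section~3, where it writes $G\cong\prod_i S_{p_i}$ with $S_{p_i}\cong\mathbb{Z}_{p_i^{e_i}}\times\mathbb{Z}_{p_i^{t_i}}$. The citation buys the paper brevity; your argument buys self-containment.
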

\begin{definition}
    If $G$ has a decomposition as a direct sum $G = \mathbb{Z}_{n_1} \times \mathbb{Z}_{n_2} \times \cdots \times 
    \mathbb{Z}_{n_k},$ where $n_k\mid n_{k-1}\mid\dots \mid n_1,$ then one says that $G$ has \emph{invariant factors} $(n_1,n_2,\dots,n_k)$. 
\end{definition}
 Let $d(G)$ denote the minimum size of a generating set of $G.$ The following theorem determines $d(G)$ for a finite abelian group.
%    \todo{check how to argue this}
\begin{theorem}[\cite{minabelian}]
    Let $G$ be a product of finite cyclic groups $$G= \Z_{n_1} \times \Z_{n_2}\times \dots \times \Z_{n_k},$$ where
    for all i, $n_i>1$. For every prime $p$, let $d_p=|\{i\leq  k : p\mid n_i\}|$ and
    $d(G) = \max\{d_p : p \text{ prime} \}.$
    Then the smallest size of a generating set of $G$ is $d(G).$
\end{theorem}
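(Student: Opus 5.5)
We prove the two inequalities $d(G)\ge \max_p d_p$ and $d(G)\le \max_p d_p$ separately, reducing to primary components throughout. Write $G=\Z_{n_1}\times\cdots\times\Z_{n_k}$ and, for each prime $p$, let $G_p$ be the Sylow $p$-subgroup. Since $G$ is finite abelian, $G=\prod_p G_p$, and each $\Z_{n_i}$ splits as $\prod_p \Z_{p^{a_{i,p}}}$ with $p^{a_{i,p}}\,\|\, n_i$. Hence $G_p\cong \prod_{i:\,p\mid n_i}\Z_{p^{a_{i,p}}}$, a product of exactly $d_p$ nontrivial cyclic $p$-groups.

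\emph{Lower bound.} Any generating set of $G$ projects onto a generating set of $G_p$, because $G_p$ is a quotient of $G$ via the projection $G\to G_p$. So it suffices to show that $G_p$ needs at least $d_p$ generators. We will compute $\Phi(G_p)=pG_p$, using that $G_p/pG_p$ is elementary abelian and that every maximal subgroup of an abelian $p$-group has index $p$ and so contains $pG_p$. Then $G_p/\Phi(G_p)\cong \prod_{i:\,p\mid n_i}\Z_{p^{a_{i,p}}}/p\Z_{p^{a_{i,p}}}\cong \mathbb{F}_p^{d_p}$. By Burnside's Basis Theorem (Theorem~\ref{thm:burnside}), every minimal generating set of $G_p$ has exactly $d_p$ elements. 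Since any generating set contains a minimal one, we get $d(G_p)=d_p$, and therefore $d(G)\ge d_p$ for every $p$.

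\emph{Upper bound.} Let $m=\max_p d_p$. For each $p$, choose generators $g_{1,p},\dots,g_{m,p}$ of $G_p$, padding with identities when $d_p<m$; the standard basis of the cyclic factors will do. Set $x_j=\sum_p g_{j,p}\in G$ for $j=1,\dots,m$. We will show that $H=\langle x_1,\dots,x_m\rangle$ equals $G$. Let $q=|G_p|$ and $r=|G|/q$, which are coprime. Then $r x_j$ lies in $G_p$ and equals $r g_{j,p}$. Because $\gcd(r,q)=1$, multiplication by $r$ is an automorphism of $G_p$, so the elements $r g_{j,p}$ still generate $G_p$. Thus $G_p\subseteq H$ for every $p$, and hence $H=G$.

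The only step needing real care is the identification $\Phi(G_p)=pG_p$ together with the dimension count of $G_p/pG_p$. Its inclusion $\supseteq$ follows from the index-$p$ argument above. For $\subseteq$, we use that the quotient $G_p/pG_p$ is an $\mathbb{F}_p$-vector space, in which the hyperplanes intersect to zero; their preimages are maximal subgroups of $G_p$ whose intersection is $pG_p$. Everything else, namely the CRT splitting of $\Z_{n_i}$ and the coprime-multiplication trick, is routine.
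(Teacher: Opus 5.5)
The paper gives no proof of this statement; it quotes it from \cite{minabelian}. So there is no in-paper argument to compare yours with, and your proof has to stand on its own. It does: it is correct and complete.

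\begin{itemize}
\item The CRT splitting of each $\Z_{n_i}$ correctly identifies $G_p$ as a product of exactly $d_p$ nontrivial cyclic $p$-groups.
\item The lower bound is sound. You pass through the surjection $G\to G_p$, prove $\Phi(G_p)=pG_p$ with both inclusions argued properly, and then apply Burnside's Basis Theorem (Theorem~\ref{thm:burnside}) to get $d(G_p)=d_p$.
\item The upper bound is also right. For $p'\neq p$ the order of $G_{p'}$ divides $r$, so $r x_j=r g_{j,p}$. Since $\gcd(r,|G_p|)=1$, multiplication by $r$ is an automorphism of $G_p$, hence the $r g_{j,p}$ still generate $G_p$.
\end{itemize}

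\textbf{A shorter lower bound.} You do not need Sylow subgroups, Frattini subgroups, or Burnside. Directly, $G/pG\cong\prod_i \Z_{n_i}/p\Z_{n_i}\cong \mathbb{F}_p^{d_p}$, because $\Z_{n_i}/p\Z_{n_i}$ has order $\gcd(p,n_i)$. Any generating set of $G$ maps to a spanning set of this $d_p$-dimensional space, which forces at least $d_p$ elements. Your Frattini route is longer, but it uses only the preliminaries the paper already sets up.

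\textbf{Relation to the paper's own gap.} Your coprime-multiplication step ($r x_j=r g_{j,p}$) is exactly the detail the paper skips in the sufficiency direction of Theorem~\ref{thm:agen}. There it only says that, because the Sylow factors have coprime orders and commute, ``we can easily show'' that $g\in\langle x,y\rangle$. Your argument fills that in rigorously.
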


\begin{rem}
        By the above theorem, an abelian group $G$ has $d(G) = 2$ if and only if $G \cong 
        \mathbb{Z}_n \times \mathbb{Z}_m$ with $m \mid n$ and $m, n > 1$. 
        Throughout this paper, we work with groups of this form.
        \end{rem}

    \begin{pro}[\cite{rotman}]\label{pro:imp}
        Let $\pi(n)$ denote the set of distinct prime divisors of $n$.
        \begin{enumerate}
            \item For $G = \mathbb{Z}_n$,
            $$\Phi(\mathbb{Z}_n) = \bigcap_{p \in \pi(n)} p\mathbb{Z}_n,$$
            where $p\mathbb{Z}_n = \{pg : g \in \mathbb{Z}_n\}$ is the unique 
            subgroup of index $p$ in $\mathbb{Z}_n$.
        
            \item For a finite abelian group $G = \mathbb{Z}_{p_1^{e_1}} \times 
            \cdots \times \mathbb{Z}_{p_k^{e_k}}$ with distinct primes $p_i$,
            $$\Phi(G) = \Phi(\mathbb{Z}_{p_1^{e_1}}) \times \cdots \times 
            \Phi(\mathbb{Z}_{p_k^{e_k}}).$$
        \end{enumerate}
        \end{pro}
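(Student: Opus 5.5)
We prove the two parts separately. Throughout we use the standard fact that in a finite group the Frattini subgroup is exactly the set of \emph{non-generators}, i.e.\ elements $x$ such that $\langle X, x\rangle = G$ always implies $\langle X\rangle = G$. Equivalently, $\Phi$ is the intersection of the maximal subgroups, which is the definition recalled above.

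For part (1), let $G=\mathbb{Z}_n$. Every subgroup of $\mathbb{Z}_n$ is $d\mathbb{Z}_n$ for a unique divisor $d$ of $n$, and $d\mathbb{Z}_n$ has index $d$. Containment of subgroups reverses divisibility: $d\mathbb{Z}_n\subseteq d'\mathbb{Z}_n$ if and only if $d'\mid d$. Hence a proper subgroup $d\mathbb{Z}_n$ (with $d>1$) is maximal exactly when no $d'$ with $1<d'<d$ divides $d$, that is, when $d=p$ is a prime divisor of $n$. So the maximal subgroups are precisely the $p\mathbb{Z}_n$ for $p\in\pi(n)$. Each of these has index $p$, and it is the unique subgroup of that index because subgroups of $\mathbb{Z}_n$ are determined by their order. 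Intersecting over $p\in\pi(n)$ gives the formula. As a check, the intersection equals $\mathrm{rad}(n)\mathbb{Z}_n$, where $\mathrm{rad}(n)$ is the product of the distinct primes dividing $n$.

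For part (2), the main step is a description of the maximal subgroups of $G=\prod_i \mathbb{Z}_{p_i^{e_i}}$ with the $p_i$ distinct. Write $G=\prod_i P_i$ with $P_i=\mathbb{Z}_{p_i^{e_i}}$. The orders $|P_i|$ are pairwise coprime, so every subgroup $H\le G$ decomposes as $H=\prod_i (H\cap P_i)$. To see this, take $h\in H$. Its $P_i$-component equals $m_i h$ for a suitable integer $m_i$, chosen by CRT so that $m_i\equiv 1 \pmod{p_i^{e_i}}$ and $m_i\equiv 0\pmod{p_j^{e_j}}$ for $j\ne i$. Therefore each component lies in $H$.

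It follows that $H$ is maximal in $G$ if and only if there is exactly one index $i$ with $H\cap P_i$ maximal in $P_i$ and $H\cap P_j=P_j$ for all $j\ne i$. Indeed, if two factors were proper, or one factor were non-maximal, we could enlarge a single factor and obtain a strictly larger proper subgroup. Consequently
$$\Phi(G)=\bigcap_i\Big(\Phi(P_i)\times\prod_{j\ne i}P_j\Big)=\prod_i \Phi(P_i).$$
Here $\Phi(P_i)=p_iP_i$ by part (1).

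The only step needing care is the coprime-splitting of subgroups. Notice that it fails if primes repeat; for example, the diagonal subgroup of $\mathbb{Z}_p\times\mathbb{Z}_p$ is not a product of subgroups of the factors. This is exactly why the statement assumes the primes are distinct, and the CRT idempotent argument handles it cleanly.
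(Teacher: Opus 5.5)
The paper gives no proof of this proposition; it is quoted from \cite{rotman}, so there is no argument in the text to compare with. Your proof is correct and self-contained. Identifying the maximal subgroups of $\mathbb{Z}_n$ as the $p\mathbb{Z}_n$ with $p\in\pi(n)$ settles part (1), and the CRT-idempotent splitting $H=\prod_i(H\cap P_i)$ correctly reduces part (2) to the cyclic factors.

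There is one small omission. You only justify that a maximal subgroup must have the form $M_i\times\prod_{j\neq i}P_j$. Equality of the two intersections also needs the converse, but that is immediate: such a subgroup has prime index $p_i$ in $G$. You could also shortcut part (2) entirely. With distinct primes, $G\cong\mathbb{Z}_n$ where $n=\prod_i p_i^{e_i}$. Transporting part (1) through the CRT isomorphism then gives the product formula directly.

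Your closing remark is misleading, though. Distinctness of the primes is needed for your splitting lemma, not for the conclusion: $\Phi(A\times B)=\Phi(A)\times\Phi(B)$ holds for all finite groups $A,B$. This matters here because the paper applies part (2) with a repeated prime, to $\mathbb{Z}_{p^e}\times\mathbb{Z}_{p^t}$ in the proof of Proposition~\ref{pro:pgen}, and your argument does not cover that use.

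For abelian $p$-groups the fix is short:
\begin{enumerate}
\item Every maximal subgroup $M$ of a finite abelian $p$-group $G$ has index $p$, so $pG\le M$.
\item The maximal subgroups therefore correspond to the hyperplanes of the $\mathbb{F}_p$-space $G/pG$, and these hyperplanes intersect in $0$. Hence $\Phi(G)=pG$.
\item Since $p(A\times B)=pA\times pB$, the product formula follows for repeated primes as well.
\end{enumerate}
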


\subsection{Kronecker product of graphs and equitable partition}
Recall that the \emph{Kronecker product} $A\otimes B$ of two matrices $A = [a_{ij}]$ and $B, $ is the partitioned matrix $[a_{ij}B]$. The graph product, known as the Kronecker product of graphs, is derived from the
Kronecker product of matrices (for details see~\cite{kronecker}). 

 Let $\G_1 = (V_1, E_1)$ and $\G_2 = (V_2, E_2)$ be two graphs with the vertex set $V_i$ and the edge set $E_i$ ($i=1,2$). 
The \emph{Kronecker product} $\G_1 \otimes \G_2$, is the graph with the vertex set $V_1 \times V_2$, and $E(\G_1\otimes \G_2)=\left\{(u_1, v_1) (u_2, v_2):\, u_1u_2\in E_1 \text{ and } v_1v_2\in E_2  \right\}.$
This definition assumes that both $\G_1$ and $\G_2$ are simple graphs (that is, without loops or multiple edges).
The graphs $\G_1$ and $\G_2$ are called \emph{factors} of the product
$\G_1\otimes \G_2.$ Other popular names for the Kronecker product that have appeared in the literature are tensor
product, direct product, and cross product.

For a graph \(\G\) and a vertex \(v \in V(\G)\), the degree of \(v\) in \(G\) is denoted by \(\deg_\G(v)\). When the underlying graph is clear from the context, we simply write \(\deg(v)\). 
Now we recall some properties and results on the Kronecker product of graphs~\cite{kronecker}.

\begin{pro}\label{pro:per}
    The following properties hold for the Kronecker product of graphs $\G_1$ and $\G_2.$
    \begin{enumerate}
        \item $|E(\G_1\otimes \G_2)|=2|E(\G_1)||E(\G_2)|;$
        \item For any $(u,v)\in V(\G_1\otimes \G_2),$ 
        $$deg_{\G_1\otimes \G_2}(\left(u,v\right))=deg_{\G_1}(u)deg_{\G_2}(v).$$
    \end{enumerate}
\end{pro}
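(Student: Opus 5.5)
Both properties follow directly from the definition of $E(\G_1\otimes \G_2)$: an edge of the product is determined by an edge of $\G_1$ together with an edge of $\G_2$. I will prove the degree formula (2) first and then obtain the edge count (1) from it by the handshake lemma.

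For (2), fix a vertex $(u,v)\in V_1\times V_2$. By definition, $(u',v')$ is adjacent to $(u,v)$ in $\G_1\otimes\G_2$ exactly when $uu'\in E_1$ and $vv'\in E_2$. So the neighbourhood of $(u,v)$ is
$$N_{\G_1\otimes\G_2}\bigl((u,v)\bigr)=N_{\G_1}(u)\times N_{\G_2}(v).$$
Since $\G_1$ and $\G_2$ are simple, $u\notin N_{\G_1}(u)$. Hence this neighbourhood never contains $(u,v)$ itself, and the product graph is loopless. Distinct pairs $(u',v')$ are distinct vertices, so no multiple edges arise either. Taking cardinalities gives $\deg_{\G_1\otimes\G_2}((u,v))=\deg_{\G_1}(u)\deg_{\G_2}(v)$.

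For (1), apply the handshake lemma to the product graph and use (2):
$$2|E(\G_1\otimes\G_2)|=\sum_{(u,v)}\deg_{\G_1}(u)\deg_{\G_2}(v)=\Bigl(\sum_{u\in V_1}\deg_{\G_1}(u)\Bigr)\Bigl(\sum_{v\in V_2}\deg_{\G_2}(v)\Bigr)=(2|E_1|)(2|E_2|).$$
Dividing by $2$ gives $|E(\G_1\otimes\G_2)|=2|E_1||E_2|$.

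As a cross-check on (1), I will also count directly. Each unordered edge $\{u_1,u_2\}\in E_1$ and each unordered edge $\{v_1,v_2\}\in E_2$ produce exactly the two distinct product edges $\{(u_1,v_1),(u_2,v_2)\}$ and $\{(u_1,v_2),(u_2,v_1)\}$. Conversely, every product edge arises from exactly one such pair of edges. This again gives $2|E_1||E_2|$.

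The only point needing care is this factor of $2$, that is, ensuring edges are counted as unordered pairs throughout. The handshake argument handles it cleanly, which is why I derive (1) from (2) rather than counting edges directly.
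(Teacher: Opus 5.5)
Your proof is correct. The identity $N_{\G_1\otimes\G_2}((u,v))=N_{\G_1}(u)\times N_{\G_2}(v)$ gives (2) immediately, and the handshake lemma turns (2) into (1). The paper itself gives no proof here; it only recalls both facts from the cited reference on Kronecker products. So there is no competing argument to compare with.

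Your choice of deriving (1) from (2) does pay off later in the paper. In the remark that allows loops in $\G_2$ (a loop contributes $1$ to the degree and counts as one edge), your neighbourhood identity still holds. The product is still simple because $u\notin N_{\G_1}(u)$. The handshake computation then becomes $2|E(\G_1\otimes\G_2)|=2|E(\G_1)|\bigl(2|E^o(\G_2)|+|\mathcal{L}(\G_2)|\bigr)$. This is exactly the modified edge formula the paper states there without proof.

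Your direct cross-check does not transfer unchanged to that setting. When $\{v_1,v_2\}$ is a loop ($v_1=v_2=v$), the two product edges $\{(u_1,v_1),(u_2,v_2)\}$ and $\{(u_1,v_2),(u_2,v_1)\}$ coincide. That is precisely why the loop term there has coefficient $|E(\G_1)|$ rather than $2|E(\G_1)|$.
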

\begin{theorem}[Theorem 1,~\cite{kronecker}]\label{thm:kcon}
Let $\G_1$ and $\G_2$ be connected graphs. The product $\G_1\otimes \G_2$ is connected if and only if either $\G_1$ or $\G_2$ contains an odd cycle. 
\end{theorem}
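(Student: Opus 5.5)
The plan is to prove the two implications separately, working throughout with the definition $E(\G_1\otimes\G_2)=\{(u_1,v_1)(u_2,v_2): u_1u_2\in E_1,\ v_1v_2\in E_2\}$. A walk in $\G_1\otimes\G_2$ is exactly a pair of walks of the \emph{same length} $\ell$: one in $\G_1$ and one in $\G_2$, taken in lockstep. So connectivity of the product reduces to a parity question about walk lengths in the factors. We may assume each factor has at least one edge; the degenerate single-vertex case must be set aside, since then the product is edgeless.

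\textbf{Sufficiency.} Suppose, say, $\G_1$ contains an odd cycle $C$. Fix $(u,v)$ and $(u',v')$, and choose a $v$--$v'$ walk $W_2$ in $\G_2$ of some length $\ell$. The goal is a $u$--$u'$ walk in $\G_1$ of length exactly $\ell$, possibly after enlarging $\ell$.

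\begin{itemize}
\item Walk lengths can be padded by $2$ in either factor by going back and forth along an edge.
\item In $\G_1$ there are $u$--$u'$ walks of both parities: walk from $u$ to $C$, around $C$ once or not at all, and then on to $u'$.
\item Hence for every sufficiently large $L$ there is a $u$--$u'$ walk of length $L$ in $\G_1$.
\item Pad $W_2$ by even amounts so its length is large, then pick the $\G_1$-walk of matching length.
\end{itemize}

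Pairing the two walks gives a walk in the product, so the product is connected.

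\textbf{Necessity.} Suppose neither factor has an odd cycle, so both are bipartite. Write the colour classes as $A_1\cup B_1$ and $A_2\cup B_2$. Every edge of the product changes both colours simultaneously. Therefore the parity of $[\,u\in A_1\,]+[\,v\in A_2\,]$ is invariant along walks. Both parities occur, because $A_1,B_1,A_2,B_2$ are all nonempty when each factor has an edge. Taking $a\in A_1$ and $a'\in A_2$, the vertex $(a,a')$ is not connected to $(a,b)$ for any $b\in B_2$, so the product is disconnected.

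The main obstacle is the sufficiency step, specifically the claim that all large lengths are realized in $\G_1$. It follows because the realizable lengths include an even value and an odd value, and each of these is closed under adding $2$. The only other care needed is handling the trivial graph $K_1$ consistently with the statement.
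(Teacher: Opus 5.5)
The paper gives no proof of this statement; it quotes Weichsel's theorem from \cite{kronecker}, so there is no in-paper argument to compare against. Your proof is correct and is the standard argument for this result. It rests on three points:
\begin{itemize}
\item walks in the product are exactly pairs of equal-length walks in the factors;
\item an odd cycle in one factor makes every sufficiently large length realizable between any two vertices;
\item in the bipartite case, the parity of $[u\in A_1]+[v\in A_2]$ is invariant along walks and separates the product.
\end{itemize}

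One point of framing needs correcting. Nontriviality of the factors is not something you ``may assume''. It is a hypothesis the statement genuinely needs, and without it the statement is false in both directions:
\begin{itemize}
\item $K_1\otimes K_3$ is three isolated vertices, although $K_3$ is an odd cycle;
\item $K_1\otimes K_1=K_1$ is connected, although neither factor has an odd cycle.
\end{itemize}
So the theorem must be read as being about connected graphs with at least one edge. That is exactly the case your argument covers, and it uses the hypothesis in both directions: to pad walks by $2$, and to make all four colour classes nonempty.
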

\begin{theorem}[\cite{kronecker}] \label{thm:kadd}
    Let $A_1$ and $A_2$ be the adjacency matrices of the graphs $\G_1$ and $\G_2,$ respectively. The adjacency matrix of the graph $\G_1\otimes \G_2$ is $A_1\otimes A_2,$ the Kronecker product of matrices $A_1$ and $A_2.$
\end{theorem}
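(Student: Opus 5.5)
The statement to prove is Theorem~\ref{thm:kadd}: the adjacency matrix of $\G_1\otimes\G_2$ is $A_1\otimes A_2$. The plan is to fix an ordering of $V_1\times V_2$ that matches the block structure of the Kronecker product of matrices, and then compare the two matrices entry by entry.

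First I fix the orderings. Let $V_1=\{u_1,\dots,u_m\}$ and $V_2=\{v_1,\dots,v_n\}$, and let $A_1=[a_{ij}]$ and $A_2=[b_{kl}]$ be the adjacency matrices with respect to these orderings. I order the vertices of $\G_1\otimes\G_2$ lexicographically, so $(u_i,v_k)$ is placed at position $(i-1)n+k$. With this choice the $\big((i-1)n+k,\,(j-1)n+l\big)$ entry of $A_1\otimes A_2=[a_{ij}A_2]$ is exactly $a_{ij}b_{kl}$, since the $(i,j)$ block is $a_{ij}A_2$ and $(k,l)$ is the position inside that block.

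Next I compare entries. By the definition of $E(\G_1\otimes\G_2)$, the vertices $(u_i,v_k)$ and $(u_j,v_l)$ are adjacent if and only if $u_iu_j\in E_1$ and $v_kv_l\in E_2$. This holds if and only if $a_{ij}=1$ and $b_{kl}=1$. Because all entries are $0$ or $1$, that is in turn equivalent to $a_{ij}b_{kl}=1$. So the $0/1$ adjacency indicator of the product equals $a_{ij}b_{kl}$ in every position, and the two matrices agree.

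The only point that needs care is the diagonal, where one must check that the product graph has no loops. If $i=j$, then $a_{ii}=0$ because $\G_1$ is simple, so the entry is $0$. This is consistent with $(u_i,v_k)$ not being adjacent to $(u_i,v_l)$, since $u_iu_i\notin E_1$. Hence $\G_1\otimes\G_2$ is simple and its adjacency matrix is $A_1\otimes A_2$. A different ordering of $V_1\times V_2$ changes the matrix only by a permutation similarity, so the statement holds up to this relabeling.
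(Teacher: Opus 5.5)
Your proof is correct. With the lexicographic ordering of $V_1\times V_2$, the entry of $A_1\otimes A_2$ indexed by $(u_i,v_k),(u_j,v_l)$ is $a_{ij}b_{kl}$, and this equals $1$ exactly when $u_iu_j\in E_1$ and $v_kv_l\in E_2$, which is the definition of adjacency in $\G_1\otimes\G_2$. The paper gives no proof of its own and simply cites the result from \cite{kronecker}, so your entrywise check is the standard argument behind that citation. The same computation also covers the paper's later remark that the adjacency matrix is still $A_1\otimes A_2$ when $\G_2$ has loops: a loop at $v_k$ is encoded as $b_{kk}=1$, and the diagonal of the product stays zero because $a_{ii}=0$.
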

\begin{cor}\label{cor:kspec}
   If $\lambda_i,$ where $ i = 1, \ldots, m$ and $\mu_j,$ where $ j = 1, \ldots, n$ are adjacency eigenvalues of $\G_1$ and $\G_2$, respectively, then $\lambda_i \mu_j,$ where $ i = 1, \ldots, m$ and $ j = 1, \ldots, n$ are the adjacency eigenvalues of $\G_1 \otimes \G_2.$
\end{cor}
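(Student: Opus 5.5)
The statement is Corollary~\ref{cor:kspec}: the adjacency eigenvalues of $\G_1\otimes\G_2$ are exactly the products $\lambda_i\mu_j$, counted with multiplicity. I would deduce it directly from Theorem~\ref{thm:kadd}. That theorem says the adjacency matrix of $\G_1\otimes \G_2$ is $A_1\otimes A_2$, so it suffices to prove the matrix fact that the eigenvalues of $A_1\otimes A_2$ are $\lambda_i\mu_j$ for $1\le i\le m$, $1\le j\le n$.

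\textbf{Step 1: diagonalize the factors.} Since $A_1$ and $A_2$ are real symmetric, there are orthogonal matrices $P$ and $Q$ with $P^{T}A_1P=D_1=\mathrm{diag}(\lambda_1,\dots,\lambda_m)$ and $Q^{T}A_2Q=D_2=\mathrm{diag}(\mu_1,\dots,\mu_n)$.

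\textbf{Step 2: the mixed-product rule.} I would use $(A\otimes B)(C\otimes D)=AC\otimes BD$. This follows by a direct block computation from the definition $A\otimes B=[a_{ij}B]$: the $(i,k)$ block of the product is $\sum_j a_{ij}c_{jk}\,BD$. From the rule and $(P\otimes Q)^{T}=P^{T}\otimes Q^{T}$, one gets that $P\otimes Q$ is orthogonal.

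\textbf{Step 3: conclude.} Applying the rule twice gives
\[
(P\otimes Q)^{T}(A_1\otimes A_2)(P\otimes Q)=P^{T}A_1P\otimes Q^{T}A_2Q=D_1\otimes D_2 .
\]
By the block definition, $D_1\otimes D_2$ is diagonal with entries $\lambda_i\mu_j$, one for each pair $(i,j)$. Similarity preserves the spectrum with multiplicities, so the spectrum of $A_1\otimes A_2$ is $\{\lambda_i\mu_j\}$.

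As an alternative that avoids diagonalization, one can check that if $A_1x=\lambda_i x$ and $A_2y=\mu_j y$, then $(A_1\otimes A_2)(x\otimes y)=\lambda_i\mu_j(x\otimes y)$. Taking orthonormal eigenbases $\{x_i\}$ and $\{y_j\}$, the $mn$ vectors $x_i\otimes y_j$ are orthonormal by the mixed-product rule, hence form a basis. This accounts for all $mn$ eigenvalues with the correct multiplicities.

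\textbf{Main obstacle.} The only substantive point is verifying the mixed-product identity from the block definition, and that verification is routine. The other care point is multiplicity bookkeeping: the statement should be read as a multiset of $mn$ values. Both diagonalization and the orthonormal-basis argument handle this automatically, even when some products $\lambda_i\mu_j$ coincide.
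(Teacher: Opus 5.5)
Your proof is correct and follows essentially the paper's route: the paper gives no separate argument, treating the statement as an immediate consequence of Theorem~\ref{thm:kadd} (the adjacency matrix is $A_1\otimes A_2$) together with the standard, cited fact that the eigenvalues of a Kronecker product of matrices are the pairwise products. Your orthogonal diagonalization via $P\otimes Q$ and the mixed-product rule is exactly that standard argument, and is the same device the paper uses with $M=P\otimes I$ in the proof of Theorem~\ref{thm:lap}; it also covers the looped factors the paper later uses, since their adjacency matrices remain symmetric.
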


From~(2) of Proposition~\ref{pro:per}, the Kronecker product of regular graphs is regular, and one can easily compute the Laplacian spectrum in that case. Otherwise, the following result gives some partial eigenvalues of the Laplacian matrix. We adopt a notation $S(M)$ for the multiset of the eigenvalues of the matrix $M.$
 \begin{theorem}[Theorem 13,~\cite{barik}]\label{thm:klap}
Let $\G_1$ be a regular graph on $m$ vertices with regularity $r$ and $\G_2$ be any graph on $n$ vertices. Suppose that $S(L(\G_2)) = \{\mu_1, \dots, \mu_n\}$. Then
$$
     r\mu_j \in S(L(\G_1 \otimes \G_2))
$$
 for each $j = 1, 2, \dots, n$.
 \end{theorem}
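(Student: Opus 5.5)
Theorem~\ref{thm:klap} asserts that $r\mu_j$ is a Laplacian eigenvalue of $\G_1\otimes\G_2$ whenever $\mu_j$ is a Laplacian eigenvalue of $\G_2$ and $\G_1$ is $r$-regular. My plan is to construct eigenvectors explicitly, using the Kronecker structure of Theorem~\ref{thm:kadd}. Let $A_1,A_2$ be the adjacency matrices and $D_2$ the degree matrix of $\G_2$, so that $L(\G_2)=D_2-A_2$. Since $\G_1$ is $r$-regular, its degree matrix is $rI_m$. By Proposition~\ref{pro:per}(2), the vertex $(u,v)$ has degree $r\deg_{\G_2}(v)$, so the degree matrix of $\G_1\otimes\G_2$ is $rI_m\otimes D_2$. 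Together with Theorem~\ref{thm:kadd}, this gives
$$L(\G_1\otimes\G_2)=rI_m\otimes D_2-A_1\otimes A_2 .$$

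The key step is to choose a test vector of the form $x\otimes y$, where $x=\mathbf{1}_m$ is the all-ones vector and $y$ is a Laplacian eigenvector of $\G_2$ with $L(\G_2)y=\mu_j y$. Regularity gives $A_1\mathbf{1}_m=r\mathbf{1}_m$. Using the mixed-product rule $(P\otimes Q)(x\otimes y)=Px\otimes Qy$, we get
$$L(\G_1\otimes\G_2)(\mathbf{1}_m\otimes y)=r\mathbf{1}_m\otimes D_2y-r\mathbf{1}_m\otimes A_2y=r\,\mathbf{1}_m\otimes L(\G_2)y=r\mu_j(\mathbf{1}_m\otimes y).$$
So $\mathbf{1}_m\otimes y$ is an eigenvector with eigenvalue $r\mu_j$.

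The main point needing care is multiplicity, since the statement concerns the multiset $S(L(\G_1\otimes\G_2))$. If $\mu_j$ is repeated, I would take an orthonormal eigenbasis $y_1,\dots,y_n$ of the symmetric matrix $L(\G_2)$. The vectors $\mathbf{1}_m\otimes y_j$ are then mutually orthogonal, because $\langle \mathbf{1}\otimes y_i,\mathbf{1}\otimes y_k\rangle=m\langle y_i,y_k\rangle$, and hence linearly independent. It follows that the eigenspace of $r\mu$ contains an $n_\mu$-dimensional subspace, where $n_\mu$ is the multiplicity of $\mu$ in $S(L(\G_2))$. Therefore each $r\mu_j$ occurs in $S(L(\G_1\otimes\G_2))$ with at least the multiplicity of $\mu_j$, which is the claim.
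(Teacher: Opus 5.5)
Your proof is correct and complete. Note that the paper gives no proof of this statement; it quotes it from \cite{barik}. The identity $L(\G_1\otimes\G_2)=rI_m\otimes D_2-A_1\otimes A_2$ and the eigenvector $\mathbf{1}_m\otimes y$ are exactly right. The multiplicity step is cleanest stated as follows: $W=\mathbf{1}_m\otimes\mathbb{R}^n$ is an $n$-dimensional invariant subspace of $L(\G_1\otimes\G_2)$, and via the isometry $y\mapsto m^{-1/2}\,\mathbf{1}_m\otimes y$ the Laplacian acts on $W$ as $rL(\G_2)$. This gives the multiset inclusion at once, including the degenerate case $r=0$, where different $\mu_j$ all collapse to the same value $0$.

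The closest argument in the paper is the one in Theorem~\ref{thm:lap}. Conjugating by $P\otimes I$, with $P$ orthogonal diagonalizing $A_1$, turns $L(\G_1\otimes\G_2)$ into a block-diagonal matrix with blocks $rD_2-\lambda_iA_2$, one for each adjacency eigenvalue $\lambda_i$ of $\G_1$. The block for $\lambda_i=r$ (eigenvector $\mathbf{1}_m$) is exactly $rL(\G_2)$, so the statement follows with multiplicities.

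Your construction is the restriction of that decomposition to this one block. It therefore needs only the single eigenvector $\mathbf{1}_m$ of $A_1$, not a full orthogonal diagonalization. The block route gives more in return: it describes the whole Laplacian spectrum. For instance, it shows that if $\G_1$ has $c$ components (equivalently, $r$ has multiplicity $c$ in $A_1$), then $r\mu$ has multiplicity at least $c$ times that of $\mu$ in $L(\G_2)$.

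One point in your favour is that your computation uses only two facts: $A(\G_1\otimes\G_2)=A_1\otimes A_2$, and $D(\G_1\otimes\G_2)=rI_m\otimes D_2$ with $D_2=\mathrm{diag}(A_2\mathbf{1}_n)$. Both hold under the paper's loop convention, where a loop contributes $1$ to the degree and a $1$ on the diagonal of $A_2$. So your argument also justifies the paper's use of this result with $\G_2=\widehat\Gamma(\Z_s)$ in the remark preceding Theorem~\ref{thm:lap}. That use goes beyond the simple-graph setting in which the Kronecker product and the cited theorem are originally stated.
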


\begin{rem}
    In this paper, we also consider the Kronecker product $\G_1 \otimes 
\G_2$ when one factor contains loops. We adopt the convention that a 
loop at a vertex $v$ in $\G_2$ contributes to adjacency in the product: 
an edge exists between $(u_1,v_1)$ and $(u_2,v_2)$ if $u_1u_2 \in 
E(\G_1)$ and $v_1v_2 \in E(\G_2)$, including the case $v_1 = 
v_2$ when $\Gamma_2$ has a loop at $v_1$. Further, we assume that a loop contributes as a single edge in the graph.
Under this convention, if $\G_1$ is simple then $\G_1 \otimes \G_2$ is 
simple (a loop at $(u,v)$ in the product would require a loop at $u$ in 
$\G_1$). Further, the degree formula $\deg_{\G_1 \otimes \G_2}((u,v)) = \deg_{\G_1}(u)\deg_{\G_2}(v)$ continues to hold. However, under the assumption that a loop contributes to a single edge in the graph, it implies that the edge formula $|E(\G_1\otimes \G_2)|=2|E(\G_1)||E(\G_2)|$ will be modified in the case when $\G_2$ has loops and $\G_1$ is simple as follows: 
\begin{equation}
    |E(\G_1\otimes \G_2)|=2|E(\G_1)||E^o(\G_2)|+|E(\G_1)||\mathcal{L}(\G_2)|,
\end{equation}
where $E^o(\G_2)$ denote non-loop edges and $\mathcal{L}(\G_2)$ denote loops of $\G_2.$ Moreover, the adjacency matrix of $\G_1 \otimes \G_2$ remains $A_1 \otimes A_2$. 
All spectral results stated above therefore, carry over without change.
\end{rem}

\subsection*{Equitable partition}
The concept of an equitable partition is a key tool in the computation of the eigenvalues of large matrices. Now we briefly recall the definitions and some results; for further details see~\cite{book}.

Let $M$ be an $n \times n$ matrix a square matrix whose rows and columns are indexed by elements of $C=\{1,2,\dots n\}.$ Let $\P=\{C_1,\ldots, C_k\}$ be a partition of $C$. Now we partitioned the matrix $M$ into blocks, according to $\P$ as follows:
$$\begin{pmatrix}
    C_{11}&C_{12}&\cdots&C_{1k}\\
    C_{21}&C_{22}&\cdots&C_{2k}\\
    \vdots&\vdots&\ddots&\vdots\\
    C_{k1}&C_{k2}&\cdots&C_{kk}
\end{pmatrix},$$
where $C_{ij}=M[C_i,C_j]$ is a submatrix of $M$ whose rows are indexed by $C_i$ and columns are indexed by $C_j$ for $i,j=1,2,\dots,k.$ The partition $\P$ is called \emph{equitable} if the row sum of each block $C_{ij}$  is a constant $b_{ij}$ and the associated matrix $B=(b_{ij})_{k\times k}$ is called a \emph{quotient matrix} of $M.$ The following is a well known result on an equitable partition of a matrix.

\begin{theorem}[\cite{book}]\label{thm:specA}
Let $B$ be the quotient matrix of a square matrix $M$ with respect to an equitable 
partition. Then every eigenvalue of $B$ is an eigenvalue of $M$.
\end{theorem}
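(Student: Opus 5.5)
We prove Theorem~\ref{thm:specA} by building eigenvectors of $M$ out of eigenvectors of $B$ through the characteristic matrix of the partition. Write $\P=\{C_1,\dots,C_k\}$ and let $P$ be the $n\times k$ matrix with $P_{u,j}=1$ if $u\in C_j$ and $P_{u,j}=0$ otherwise. The columns of $P$ have disjoint nonempty supports, so they are linearly independent and $P$ has full column rank $k$.

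The key identity is $MP=PB$. To verify it, fix $u\in C_i$ and a column index $j$. Then
$$(MP)_{u,j}=\sum_{v\in C_j} M_{u,v},$$
which is the row sum of the block $C_{ij}$ in the row indexed by $u$. By the definition of an equitable partition this equals $b_{ij}$. On the other side, $(PB)_{u,j}=\sum_{l} P_{u,l}b_{lj}=b_{ij}$, since the only nonzero entry of row $u$ of $P$ is $P_{u,i}=1$. Hence the two matrices agree entrywise.

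Now let $\lambda$ be an eigenvalue of $B$, possibly complex, with eigenvector $x\neq 0$, so $Bx=\lambda x$. Set $y=Px$. Because $P$ has full column rank, $x\neq 0$ forces $y\neq 0$. Using the identity,
$$My=MPx=PBx=\lambda Px=\lambda y.$$
So $y$ is an eigenvector of $M$ with eigenvalue $\lambda$, and every eigenvalue of $B$ is an eigenvalue of $M$.

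Two points need care. First, the definition of equitable is stated via row sums of the blocks, and the identity $MP=PB$ uses exactly these row sums, so no symmetry of $M$ is needed. Second, the injectivity of $P$ is essential: without it, $Px$ could vanish and no eigenvector of $M$ would be produced.

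Stated as multisets, the argument gives $S(B)\subseteq S(M)$, including multiplicities. Choose a basis of $\mathbb{C}^k$ that triangularizes $B$ and extend the image of this basis under $P$ to a basis of $\mathbb{C}^n$. Since $P\mathbb{C}^k$ is $M$-invariant, $M$ becomes block upper triangular in the new basis, with $B$ as a diagonal block. Therefore the characteristic polynomial of $B$ divides that of $M$.
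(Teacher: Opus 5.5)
Your proof is correct. It is the standard argument behind the result the paper cites from \cite{book}; the paper itself gives no proof. The argument is: the characteristic matrix $P$ satisfies $MP=PB$ entrywise by the row-sum condition, and since $P$ is injective, every eigenvector $x$ of $B$ yields the eigenvector $Px$ of $M$. Your last paragraph uses the $M$-invariant subspace $P\mathbb{C}^k$ to show that the characteristic polynomial of $B$ divides that of $M$. That strengthening is also correct, and it is the multiplicity-preserving form the paper actually needs later, for instance when it assigns multiplicity $p$ to $-\alpha$ in Theorem~\ref{thm:padjc}.
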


\section{{Generating pairs}}\label{sec:3}
Let $d(G)$ denote the minimum size of a generating set of a group $G.$ In this section, we determine and formulate the set of generating pairs of finite abelian groups with $d(G)=2.$
Let $G$ be an abelian group of order $n.$ Let $n=p_1^{e_1}p_2^{e_2}\dots p_k^{e_k},$ with distinct primes $p_i$'s. The Euler totient 
function is denoted by $\varphi.$ Throughout, we consider abelian groups under componentwise addition.

\subsection{On abelian \texorpdfstring{$p$}{p}-groups}

Let us first determine the set of all generating pairs for abelian $p$-groups.
For a group $G$, the set of generating pairs is defined by
\[
  \mathrm{Gen}(G) = \{(x,y)\in G\times G \,:\, \langle x,y\rangle = G\}.
\]
Moreover, for each $x\in G$, we associate the set
\[
  N_x = \{y\in G \,:\,  \langle x,y\rangle = G\},
\]
which coincides with the neighbourhood of $x$ in the generating graph $\Gamma(G)$.  The following result is a consequence of the fundamental properties of finite abelian $p$-groups and is useful in our study. For the sake of completeness, we are proving it here.
\begin{pro}\label{pro:pgen}
    Let \( G \) be a finite abelian \( p \)-group with $d(G)=2.$ Let \( \{K_1, K_2, \dots, K_s\} \) be the set of all maximal subgroups of \( G \). Then, $s=p+1$ and  $$
        \mathrm{Gen}(G)=\{(x,y): x\in K_i,\; y\in K_j,\; i\neq j,\; \text{and } x,y\notin \Phi(G)\}.
        $$ Moreover, for any $x\in G\setminus \Phi(G),$  $|N_x|=\varphi(|G|).$
\end{pro}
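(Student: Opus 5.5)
We reduce everything to the Frattini quotient. Since $G$ is an abelian $p$-group with $d(G)=2$, Burnside's Basis Theorem (Theorem~\ref{thm:burnside}) gives $\dim_{\mathbb{F}_p}(G/\Phi(G))=2$, so $V:=G/\Phi(G)\cong \mathbb{F}_p^2$. By Corollary~\ref{cor:imp}, the maximal subgroups of $G$ correspond bijectively to the index-$p$ subgroups of $V$, i.e.\ to the one-dimensional subspaces (lines through the origin) of $\mathbb{F}_p^2$. There are $(p^2-1)/(p-1)=p+1$ such lines, so $s=p+1$. Under this correspondence $K_i=\pi^{-1}(L_i)$, where $\pi:G\to V$ is the quotient map and $L_i$ is a line. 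In particular $\Phi(G)=\bigcap K_i=\pi^{-1}(0)$, and each $x\notin\Phi(G)$ lies in exactly one $K_i$, because distinct lines meet only in $0$.

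For the description of $\mathrm{Gen}(G)$, we use the standard fact that $\langle x,y\rangle=G$ if and only if $\langle x,y\rangle\Phi(G)=G$ (the Frattini subgroup consists of non-generators). Equivalently, $\pi(x),\pi(y)$ span $V$, i.e.\ they are linearly independent in $\mathbb{F}_p^2$. This holds if and only if both are nonzero, so $x,y\notin\Phi(G)$, and they lie on different lines, so $x\in K_i$ and $y\in K_j$ with $i\neq j$. Both directions follow at once. Alternatively, one can argue directly that a proper subgroup lies in some maximal $K_i$, which forces $\pi(x),\pi(y)$ onto a common line. That gives the displayed set.

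For the count, fix $x\notin\Phi(G)$, with $x\in K_i$. Then $N_x=G\setminus K_i$, since every $y\notin K_i$ is automatically outside $\Phi(G)\subseteq K_i$ and lies in some other $K_j$. Hence $|N_x|=|G|-|G|/p=|G|(1-1/p)=\varphi(|G|)$, because $|G|$ is a power of $p$.

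The only point needing care is the non-generator property of $\Phi(G)$: that $\langle x,y,\Phi(G)\rangle=G$ implies $\langle x,y\rangle=G$. This follows since $\langle x,y\rangle$, if proper, is contained in a maximal subgroup, which contains $\Phi(G)$. Everything else is linear algebra over $\mathbb{F}_p$.
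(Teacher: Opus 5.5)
Your proof is correct and follows essentially the same route as the paper. Both arguments pass to $G/\Phi(G)\cong\mathbb{Z}_p\times\mathbb{Z}_p$, identify the $p+1$ maximal subgroups with its $p+1$ lines (so the $K_i$ meet pairwise in $\Phi(G)$), and use that a non-generating pair lies in a common maximal subgroup. The differences are cosmetic: you get $\dim_{\mathbb{F}_p} G/\Phi(G)=2$ directly from Burnside's Basis Theorem rather than computing $\Phi(\mathbb{Z}_{p^e}\times\mathbb{Z}_{p^t})$ explicitly, you prove both inclusions of the $\mathrm{Gen}(G)$ description (the paper leaves one implicit), and you count $N_x=G\setminus K_i$ as a complement instead of as the disjoint union of the $p$ sets $K_j\setminus\Phi(G)$, $j\neq i$.
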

\begin{proof}
    Let $G \cong \mathbb{Z}_{p^e} \times \mathbb{Z}_{p^t}$, where $p$ is a prime and $e, t\geq 1$. Since $G$ is a $p$-group and $d(G)=2$, apply Burnside’s Basis Theorem~\ref{thm:burnside}, we get for every $x \in G \setminus \Phi(G)$, there exists $y \in G$ such that $\langle x, y \rangle = G$. Thus, $\mathrm{Gen}(G)\neq \emptyset.$ Moreover, any $x\in \Phi(G)$ is a non generator, therefore any pair $(x,y)\in \mathrm{Gen}(G)$ implies that neither $x$ nor $y\in \Phi(G).$ 
    
    Since $G$ is a direct product of cyclic $p$-groups, $\Phi(G) = \Phi(\mathbb{Z}_{p^e}) \times \Phi(\mathbb{Z}_{p^t})$ (see Proposition~\ref{pro:imp}), and can be explicitly written as
    \begin{equation}
    \Phi(G) = \{(a, b) \in \mathbb{Z}_{p^e} \times \mathbb{Z}_{p^t} \,:\,  a = ip,\ b = jp,\ 0\leq i\leq p^{e-1}-1, 0\leq j\leq p^{t-1}-1 \}.
    \end{equation}
    Thus, $|\Phi(G)| = p^{e-1} \cdot p^{t-1} = p^{e + t - 2}$, and the quotient $G / \Phi(G)$ has order $p^2$, and isomorphic to $\mathbb{Z}_p \times \mathbb{Z}_p$. Also note that $G/\Phi(G)$ has exactly $p + 1$ subgroups of order $p$, which correspond to the maximal subgroups of $G$. Therefore, $G$ has exactly $p + 1$ maximal subgroups, each of index $p$ and order $p^{e + t - 1}$, and any two have intersection exactly $\Phi(G)$ (see Corollary~\ref{cor:imp}).

    Let $\mathcal {M}(G)=\{K_1,K_2,\dots,K_{p+1}\}$ be the set of all maximal subgroups of $G.$ We suppose $x\in K_i\setminus \Phi(G)$ and $y\in K_j\setminus \Phi(G).$ Then $\langle x,y\rangle\leq G.$ Notice that $\langle x,y\rangle= G,$ otherwise  $\langle x,y\rangle \subseteq K_r$ for some $1\leq r\leq (p+1),$ which contradicts that $x,y\notin \Phi(G).$ Therefore, we have
    $$
    \mathrm{Gen}(G)=\{(x,y): x\in K_i,\; y\in K_j,\; i\neq j,\; \text{and } x,y\notin \Phi(G)\}.
    $$ Moreover, $|K_i \setminus \Phi(G)|=\varphi(p^{e + t - 1})$ for any $1\leq i\leq (p+1).$ Thus for any $x\in G,$
    $
    |N_x|=p \varphi(p^{e + t - 1}) = \varphi(p^{e + t})=\varphi(|G|).
    $
    This completes the proof.
\end{proof}
\begin{rem}\label{rem:edge}
 For $G=\Z_{p^e}\times \Z_{p^t},$ the cardinality of $\mathrm{Gen}(G)$ is  $$|\mathrm{Gen}(G)|=p(p+1)(\varphi(p^{e+t-1}))^2.$$
\end{rem}
To determine explicitly all maximal subgroups of finite abelian $p$-groups, we first recall Goursat's lemma for groups.
\begin{pro}[\cite{Goursat}]\label{pro:goursat}
    Let \(G\) and \(H\) be arbitrary groups. Then the set \(\mathcal{S}\) of all subgroups of the direct product \(G \times H\) is in bijective correspondence with the set of all 5-tuples \((A,B,C,D,\Psi)\), where \(B \trianglelefteq A \leq G\), \(D \trianglelefteq C \leq H\), and
\(
\Psi:A/B \longrightarrow C/D
\)
is an isomorphism. Under this correspondence, the subgroup associated with the tuple \((A,B,C,D,\Psi)\) is
\begin{equation}\label{eq:iso}
K=\{(a,c)\in A\times C:\Psi(aB)=cD\}.
\end{equation} Moreover, $|A||D|=|K|=|B||C|.$
\end{pro}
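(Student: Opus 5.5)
We prove Goursat's lemma by building maps in both directions between subgroups $K\le G\times H$ and 5-tuples $(A,B,C,D,\Psi)$, and then checking that the two maps are inverse to each other.

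\emph{From a subgroup to a tuple.} Given $K\le G\times H$, let $\pi_1:K\to G$ and $\pi_2:K\to H$ be the coordinate projections. Put
\[
A=\pi_1(K),\quad C=\pi_2(K),\quad B=\{a\in G:(a,1)\in K\},\quad D=\{c\in H:(1,c)\in K\}.
\]
Identifying $B$ with $\ker\pi_2$ via $a\mapsto(a,1)$ shows that $B$ is a subgroup of $A$. To see that $B\trianglelefteq A$, take $a\in A$ with $(a,c)\in K$ and $b\in B$. Then $(a,c)(b,1)(a,c)^{-1}=(aba^{-1},1)\in K$, so $aba^{-1}\in B$. The same argument gives $D\trianglelefteq C$. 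Now define $\Psi(aB)=cD$ whenever $(a,c)\in K$.
\begin{itemize}
\item $\Psi$ is well defined: if $(a,c),(a',c')\in K$ with $aB=a'B$, then $(a,c)^{-1}(a',c')=(a^{-1}a',c^{-1}c')\in K$ and $a^{-1}a'\in B$. Multiplying by $(a'^{-1}a,1)\in K$ gives $(1,c^{-1}c')\in K$, so $c^{-1}c'\in D$.
\item $\Psi$ is a homomorphism, because $K$ is closed under products.
\item $\Psi$ is surjective, because $C=\pi_2(K)$.
\item $\Psi$ is injective, by the symmetric argument: $cD=D$ forces $(a,d)\in K$ for some $d\in D$, hence $(a,1)\in K$ and $a\in B$.
\end{itemize}

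\emph{From a tuple to a subgroup.} Given $(A,B,C,D,\Psi)$, define $K$ by \eqref{eq:iso}. Since $\Psi$ and the quotient maps are homomorphisms, $K$ is a subgroup. Applying the previous construction to this $K$ recovers the tuple:
\begin{itemize}
\item $\pi_1(K)=A$ because $\Psi$ is surjective onto $C/D$, and $\pi_2(K)=C$ because $\Psi$ is injective, hence bijective.
\item $(a,1)\in K$ if and only if $\Psi(aB)=D$, if and only if $a\in B$; similarly for $D$.
\item The induced isomorphism is $\Psi$ itself.
\end{itemize}
Conversely, starting from $K$ and forming its tuple, the set in \eqref{eq:iso} contains $K$ by the definition of $\Psi$. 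For the reverse inclusion, suppose $(a,c)$ satisfies $\Psi(aB)=cD$ and choose $(a,c')\in K$. Then $cD=c'D$, so $(1,c'^{-1}c)\in K$, and therefore $(a,c)=(a,c')(1,c'^{-1}c)\in K$. Hence the two constructions are mutually inverse. The main obstacle is this bookkeeping; the well-definedness of $\Psi$ is the key step.

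\emph{Order formula.} The projection $\pi_1:K\to A$ is surjective. Its kernel is $\{(1,c)\in K\}$, which is isomorphic to $D$. Hence $|K|=|A||D|$. Symmetrically, $\pi_2:K\to C$ is surjective with kernel isomorphic to $B$, so $|K|=|B||C|$.
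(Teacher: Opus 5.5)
Your proof is correct and is the standard proof of Goursat's lemma. The paper does not prove this proposition; it only cites it, so there is no route of the paper's to compare against. You build the tuple from $K$ via the projections and the two coordinate kernels, check that $\Psi$ is well defined, and show that the two constructions are mutually inverse. The order formula then follows from the surjections $K\to A$ and $K\to C$, whose kernels are isomorphic to $D$ and $B$.

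There is one small slip in the ``tuple to subgroup'' direction: your two justifications are swapped.
\begin{itemize}
\item $\pi_1(K)=A$ needs only that $\Psi$ is defined on all of $A/B$. Each $\Psi(aB)$ is some coset $cD$ with $c\in C$, so $(a,c)\in K$.
\item $\pi_2(K)=C$ is exactly where surjectivity of $\Psi$ is used.
\end{itemize}
Both conclusions still hold, since $\Psi$ is a bijection by hypothesis.
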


Now, we obtain the following description of the maximal subgroups of $\mathbb{Z}_{p^e} \times \mathbb{Z}_{p^t}$.

\begin{pro}
        Let $p$ be a prime and let $e \geq t \geq 1$ be integers. The maximal subgroups of $\mathbb{Z}_{p^e} \times \mathbb{Z}_{p^t}$ are precisely the following $p+1$ subgroups
        \[
        \mathbb{Z}_{p^e} \times \langle p \rangle, \quad \langle p \rangle \times \mathbb{Z}_{p^t}, \quad K_\ell := \{(x,y) \in \mathbb{Z}_{p^e} \times \mathbb{Z}_{p^t} : y \equiv \ell x \pmod{p}\} \ \ (\ell = 1, \dots, p-1).
        \]
        \end{pro}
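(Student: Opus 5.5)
The plan is to work through the quotient $G/\Phi(G)$, where $G=\mathbb{Z}_{p^e}\times\mathbb{Z}_{p^t}$. By the proof of Proposition~\ref{pro:pgen}, $\Phi(G)=\langle p\rangle\times\langle p\rangle$ and $G/\Phi(G)\cong\mathbb{Z}_p\times\mathbb{Z}_p$. The isomorphism is the reduction map
\[
\pi:(x,y)\mapsto (x \bmod p,\; y\bmod p).
\]
By Corollary~\ref{cor:imp}, the maximal subgroups of $G$ are exactly the preimages $\pi^{-1}(L)$, where $L$ runs over the subgroups of order $p$ in $\mathbb{F}_p^2$. There are exactly $p+1$ such $L$.

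\textbf{Listing the lines.} The subgroups of order $p$ in $\mathbb{F}_p^2$ are the lines through the origin:
\[
\{(0,\bar y)\},\qquad \{(\bar x,0)\},\qquad \{(\bar x,\ell\bar x)\}\ \ (\ell=1,\dots,p-1).
\]
This gives $p+1$ lines. Each nonzero vector spans exactly one line, so the lines are pairwise distinct.

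\textbf{Taking preimages.}
\begin{itemize}
\item The preimage of $\{(0,\bar y)\}$ is $\{(x,y): x\equiv 0 \pmod p\}=\langle p\rangle\times\mathbb{Z}_{p^t}$.
\item The preimage of $\{(\bar x,0)\}$ is $\mathbb{Z}_{p^e}\times\langle p\rangle$.
\item The preimage of $\{(\bar x,\ell\bar x)\}$ is $\{(x,y): y\equiv \ell x \pmod p\}=K_\ell$.
\end{itemize}

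\textbf{Well-definedness.} One small point must be checked: the congruence $y\equiv \ell x \pmod p$ has to make sense for $x\in\mathbb{Z}_{p^e}$ and $y\in\mathbb{Z}_{p^t}$. It does, because $p$ divides both $p^e$ and $p^t$ (using $t\ge1$), so reduction mod $p$ is well defined on each factor.

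\textbf{Optional direct check.} The argument above needs nothing more, but the subgroups can also be verified by hand. Each $K_\ell$ is the kernel of the homomorphism $(x,y)\mapsto (y-\ell x)\bmod p$ onto $\mathbb{Z}_p$. So $K_\ell$ has index $p$ and is therefore maximal. Alternatively, Goursat's lemma (Proposition~\ref{pro:goursat}) gives $|K|=|A||D|$ for a suitable tuple.

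\textbf{Main obstacle.} The only real care needed is justifying that every maximal subgroup contains $\Phi(G)$, so that none is missed. Corollary~\ref{cor:imp} supplies exactly this, together with the count $p+1$. Distinctness of the listed subgroups follows because distinct lines have distinct preimages. Finally, the hypothesis $e\ge t$ is not used beyond fixing notation.
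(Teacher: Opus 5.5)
Your proof is correct, but it takes a different route from the paper's.

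\textbf{The paper's route.} It proves the statement with Goursat's lemma (Proposition~\ref{pro:goursat}). Every subgroup $K$ is parametrized by the orders $p^\alpha,p^\beta,p^\gamma,p^\delta$ of $A,B,C,D$, with $\alpha-\beta=\gamma-\delta=k$. The index-$p$ condition $\alpha+\delta=e+t-1$ then forces $k\in\{0,1\}$. The case $k=0$ gives the two product subgroups. The case $k=1$ gives the $p-1$ subgroups $K_\ell$ coming from the nontrivial isomorphisms $\Psi:\mathbb{Z}_p\to\mathbb{Z}_p$.

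\textbf{Your route.} You pass to the Frattini quotient. The reduction map $\pi$ is onto $\mathbb{F}_p^2$ with kernel $\Phi(G)=p\mathbb{Z}_{p^e}\times p\mathbb{Z}_{p^t}$. Every maximal subgroup contains $\Phi(G)$; this is immediate from the definition of $\Phi$, so your ``main obstacle'' is not really one. The correspondence theorem then reduces the problem to listing the $p+1$ lines of $\mathbb{F}_p^2$. Exhaustiveness and distinctness come for free, with no case analysis on subgroup orders.

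\textbf{What each buys.} Your route is shorter and fits naturally with Proposition~\ref{pro:pgen}, which already gets the count $p+1$ from $G/\Phi(G)\cong\mathbb{Z}_p\times\mathbb{Z}_p$. It also shows directly that any two of these subgroups meet exactly in $\Phi(G)$, since distinct lines meet only in $0$. The paper's Goursat computation avoids the Frattini machinery entirely and is the more general template. The same parametrization $(\alpha,\beta,\gamma,\delta,\ell)$ enumerates subgroups of $\mathbb{Z}_{p^e}\times\mathbb{Z}_{p^t}$ of any index, not only the maximal ones.
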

        
        \begin{proof}
        By Proposition~\ref{pro:goursat}, subgroups $K \leq \mathbb{Z}_{p^e} \times \mathbb{Z}_{p^t}$ correspond bijectively to the tuples $(A,B,C,D,\Psi)$. Since $\mathbb{Z}_{p^e}$ and $\mathbb{Z}_{p^t}$ are cyclic, each subgroup is determined uniquely by its order. 
        Suppose $|A| = p^\alpha$, $|B| = p^\beta$, $|C| = p^\gamma$, $|D| = p^\delta$ with $0 \leq \beta \leq \alpha \leq e$ and $0 \leq \delta \leq \gamma \leq t$. Every subgroup is normal in its original group, and $A/B \cong \mathbb{Z}_{p^{\alpha-\beta}}$, $C/D \cong \mathbb{Z}_{p^{\gamma-\delta}}$. An isomorphism $\Psi : A/B \to C/D$ exists if and only if $\alpha - \beta = \gamma - \delta=k$ (say), in which case the isomorphisms are indexed by units $\ell$ modulo $p^k$.
        
   Further, $|K| = |A||D| = p^{\alpha+\delta}$. Since $\Z_{p^e}\times \Z_{p^t}$ is a $p$-group, $K$ is maximal if and only if $[\Z_{p^e}\times \Z_{p^t}: K] = p$, that is,
        \begin{equation}\label{eq:alpha}
        \alpha + \delta = e + t - 1. 
        \end{equation}
        Since $\delta = \gamma - k$, Equation~\ref{eq:alpha} becomes $\alpha + \gamma = e + t - 1 + k$. As $\alpha \leq e$ and $\gamma \leq t$, we have $\alpha + \gamma \leq e + t$, so $k \in \{0, 1\}$.
        \begin{description}
        \item[Case \bf{$k = 0$}:] In this case, $A = B$, $C = D$, and $\Psi$ is trivial, then $\alpha + \gamma = e + t - 1$ with $\alpha \leq e$, $\gamma \leq t$ forces $(\alpha, \gamma) \in \{(e, t-1), (e-1, t)\}$, giving sugroups $\Z_{p^e} \times \langle p \rangle$ and $\langle p \rangle \times \Z_{p^t}$, respectively.
        
        \item[Case \bf{$k = 1$}:] In this case, $\alpha + \gamma = e + t$ with $\alpha \leq e$, $\gamma \leq t$ forces $\alpha = e$, $\gamma = t$, hence $\beta = e-1$, $\delta = t-1$. Here $\Psi : \mathbb{Z}_p \to \mathbb{Z}_p$ ranges over the $p-1$ nontrivial choices $\ell \in \mathbb{Z}_p^*$, and by expression~\ref{eq:iso} the corresponding subgroup is given by
        \[
        K_\ell =\{(g,h) \in \Z_{p^e} \times \Z_{p^t} : \Psi(g \bmod p) = h \bmod p\} = \{(x,y) : y \equiv \ell x \pmod{p}\}.
        \]
        \end{description}
        Altogether this gives $2 + (p-1) = p+1$ maximal subgroups.
        \end{proof}

\subsection{The general case}  
We now determine the generating pairs of $G = \mathbb{Z}_n \times \mathbb{Z}_m$, 
where $m \mid n$. Write
\[
  m = p_1^{t_1} p_2^{t_2} \cdots p_j^{t_j}, \qquad 
  n = p_1^{e_1} p_2^{e_2} \cdots p_k^{e_k},
\]
where $p_1, p_2, \ldots, p_k$ are distinct primes, $t_i \leq e_i$ for 
$1 \leq i \leq j$, and $j \leq k$. By using the fact that if $\gcd(a,b)=1,$ then $\Z_a\times \Z_b\cong \Z_{ab},$ so $G$ 
decomposes according to the following two cases.

\begin{enumerate}[label=(\alph*)]
    \item $k = j$: every prime dividing $n$ also divides $m$, and
    \[
      G \cong \prod_{i=1}^{k} 
      \bigl(\mathbb{Z}_{p_i^{e_i}} \times \mathbb{Z}_{p_i^{t_i}}\bigr)= \prod_{i=1}^{k} 
     S_{p_i},
    \]
   where each factor $\mathbb{Z}_{p_i^{e_i}} \times \mathbb{Z}_{p_i^{t_i}}$ 
    is the Sylow $p_i$-subgroup of $G$, denoted by $S_{p_i}$.
    
    \item $j < k$: the primes $p_{j+1}, \ldots, p_k$ divide $n$ but not $m$, 
    and
    \[
      G \cong \left(\prod_{i=1}^{j} 
      S_{p_i}\right) 
      \times \,\mathbb{Z}_s,
    \]
    where $s = p_{j+1}^{e_{j+1}} \cdots p_k^{e_k}$.
\end{enumerate}
Throughout, we work with these decompositions unless stated otherwise. 

\begin{theorem}\label{thm:agen}
Let $k = j$ and let $x = (a_1, a_2, \ldots, a_k)$ and 
$y = (b_1, b_2 ,\ldots, b_k) \in G$.
Then $\langle x, y \rangle = G$ if and only if 
$\langle a_i, b_i \rangle = S_{p_i}$ 
for all $1 \leq i \leq k$.
\end{theorem}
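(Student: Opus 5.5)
The plan is to prove the two directions separately, using the identification $G=\prod_{i=1}^k S_{p_i}$ and the coordinate projections $\pi_i\colon G\to S_{p_i}$.

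\emph{Necessity.} Each $\pi_i$ is a surjective homomorphism, and the image of a subgroup generated by $x,y$ is the subgroup generated by the images. Hence $\pi_i(\langle x,y\rangle)=\langle a_i,b_i\rangle$. If $\langle x,y\rangle=G$, then $\langle a_i,b_i\rangle=\pi_i(G)=S_{p_i}$ for every $i$.

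\emph{Sufficiency.} Assume $\langle a_i,b_i\rangle=S_{p_i}$ for all $i$. It suffices to show that $H:=\langle x,y\rangle$ contains the embedded copy $\iota_i(S_{p_i})$ of each factor (the elements that are zero outside coordinate $i$), since these copies generate $G$.

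\begin{enumerate}
\item Fix $i$ and set $N_i=\prod_{r\neq i}|S_{p_r}|$. Since $\gcd(N_i,p_i)=1$, choose an integer $u_i$ with $u_iN_i\equiv 1 \pmod{|S_{p_i}|}$.
\item For any $g=(g_1,\dots,g_k)\in G$, the element $u_iN_i g$ has coordinate $r\neq i$ equal to $0$, because $N_i$ annihilates $S_{p_r}$. Its $i$-th coordinate equals $g_i$, because $u_iN_i\equiv 1$ modulo the exponent of $S_{p_i}$. Thus $u_iN_ig=\iota_i(g_i)$.
\item Applying this to $x$ and $y$, we get $\iota_i(a_i)=u_iN_ix\in H$ and $\iota_i(b_i)=u_iN_iy\in H$.
\item Since $\iota_i$ is a homomorphism, $\iota_i(\langle a_i,b_i\rangle)=\iota_i(S_{p_i})\subseteq H$.
\end{enumerate}

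Summing over $i$ gives $H=G$.

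An alternative route to sufficiency is through maximal subgroups. Any proper subgroup $M<G$ has the form $\prod_i M_i$ because the orders of the factors are coprime, and it lies in a maximal subgroup of prime index $p_i$. That index-$p_i$ subgroup must be the preimage under $\pi_i$ of a maximal subgroup of $S_{p_i}$, which would contradict $\langle a_i,b_i\rangle=S_{p_i}$. I prefer the explicit idempotent computation above because it relies only on the Chinese remainder theorem.

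The main point requiring care is step 2: checking that $u_iN_i$ acts as the identity on $S_{p_i}$ and kills the other factors. This holds because every element of $S_{p_i}$ has order dividing $|S_{p_i}|$, a power of $p_i$. The argument never uses that each $S_{p_i}$ has rank $2$, so the same proof also covers case (b), taking the cyclic factor $\mathbb{Z}_s$ as an extra coprime component.
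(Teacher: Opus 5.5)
Your proof is correct and follows essentially the same route as the paper. For necessity, your projection argument is the paper's maximal-subgroup argument in different form: the subgroup $S_{p_1}\times\cdots\times M\times\cdots\times S_{p_k}$ used there is just $\pi_i^{-1}(M)$. For sufficiency, both rely on the coprimality of the Sylow factors, but your idempotents $u_iN_i$ actually carry out the step the paper only asserts (``we can easily show that $g\in\langle x,y\rangle$''), so your version is the more complete one.
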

    \begin{proof}
        Let $G = S_{p_1} \times S_{p_2} \times \cdots \times S_{p_k},$ where each $S_{p_i}$ is non-cyclic.
         Suppose $\langle x, y \rangle = G$ but 
        $\langle a_i,b_i\rangle \neq S_{p_i}$ for some $i$. 
        Then $\langle a_i,b_i\rangle \neq S_{p_i}$ is a proper subgroup of 
        $S_{p_i}$, hence contained in some maximal subgroup $M \subsetneq S_{p_i}$. 
        But then $\langle x, y \rangle \subseteq S_{p_1} \times \cdots \times M 
        \times \cdots \times S_{p_k} \subsetneq G$, a contradiction.
        
    Suppose $\langle a_i,b_i \rangle = S_{p_i}$ 
        for all $1 \leq i \leq k$. Let $g = (c_1, c_2, \ldots, c_k) \in G$ 
        be arbitrary. Since $S_{p_i} = \langle a_i, b_i \rangle$, 
        each entry $c_i$ can be expressed in the product of
        $a_i,b_i$ and their inverses. Since $S_{p_i}$'s are 
        coprime-order factors and they commute, we can easily show that
        $g \in \langle x, y \rangle$. Hence $\langle x, y \rangle = G$.
        \end{proof}

The following result follows directly from the above theorem.
\begin{cor}
    For $k=j,$ the set of generating pairs of $G$ is given by $$\mathrm{Gen}(G)=\prod_{i=1}^{k}\mathrm{Gen}(S_{p_i}).$$ 
\end{cor}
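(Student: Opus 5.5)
The corollary is a direct restatement of Theorem~\ref{thm:agen} in terms of generating pairs. The only work is to fix the identification of $G\times G$ with $\prod_{i=1}^k (S_{p_i}\times S_{p_i})$ under which the product on the right-hand side is read.

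First, use the decomposition in case (a), $G=S_{p_1}\times\cdots\times S_{p_k}$, and the coordinate-shuffling bijection
\[
\tau: G\times G\to \prod_{i=1}^k (S_{p_i}\times S_{p_i}),\qquad \bigl((a_1,\dots,a_k),(b_1,\dots,b_k)\bigr)\mapsto \bigl((a_1,b_1),\dots,(a_k,b_k)\bigr).
\]
Each $\mathrm{Gen}(S_{p_i})\subseteq S_{p_i}\times S_{p_i}$, so $\prod_i \mathrm{Gen}(S_{p_i})$ is a subset of the codomain. The equality in the statement means $\tau(\mathrm{Gen}(G))=\prod_i\mathrm{Gen}(S_{p_i})$. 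I will state this convention explicitly.

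Second, prove the two inclusions using Theorem~\ref{thm:agen}.
\begin{itemize}
\item If $(x,y)\in\mathrm{Gen}(G)$ with $x=(a_i)$ and $y=(b_i)$, then the theorem gives $\langle a_i,b_i\rangle=S_{p_i}$ for every $i$. Hence each $(a_i,b_i)\in\mathrm{Gen}(S_{p_i})$, so $\tau(x,y)$ lies in the product.
\item Conversely, a tuple $((a_i,b_i))_i$ in the product has $\langle a_i,b_i\rangle=S_{p_i}$ for all $i$. The reverse direction of the theorem then shows that $x=(a_i)$ and $y=(b_i)$ generate $G$.
\end{itemize}
Since $\tau$ is a bijection, these two inclusions give the equality.

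There is no real obstacle here. The only point needing care is the bookkeeping of the identification $\tau$, and, if desired, the immediate consequence $|\mathrm{Gen}(G)|=\prod_i|\mathrm{Gen}(S_{p_i})|$, which can be combined with Remark~\ref{rem:edge}.
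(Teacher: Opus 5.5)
Your proposal is correct and takes the same approach as the paper, which simply notes that the corollary follows directly from Theorem~\ref{thm:agen}. Your two inclusions via the coordinate-shuffling bijection $\tau$ spell out that deduction and make explicit the identification of $G\times G$ with $\prod_{i}(S_{p_i}\times S_{p_i})$ that the paper leaves implicit.
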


\begin{theorem}\label{thm:bgen}
    Given $j<k.$ Let $x=(a_1,a_2,\ldots ,a_j,a_s)$ and $y=(b_1,b_2,\ldots, b_j,b_s)$ be two elements in $G.$
    Then  $\langle x,y\rangle=G$ if and only if $\langle a_i,b_i\rangle=S_{p_i}$ for all $1\leq i\leq j$ and $\langle a_s,b_s \rangle=\Z_s,$ where $s=p_{j+1}^{e_{j+1}}\dots p_k^{e_k}.$ 
\end{theorem}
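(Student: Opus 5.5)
The plan is to follow the same two-direction argument as Theorem~\ref{thm:agen}, treating $\Z_s$ as one more coprime-order factor. Write $G = S_{p_1}\times\cdots\times S_{p_j}\times\Z_s$. The orders $|S_{p_1}|,\dots,|S_{p_j}|,s$ are pairwise coprime, since $s$ involves only the primes $p_{j+1},\dots,p_k$.

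For the forward direction, suppose $\langle x,y\rangle = G$. Consider the projection homomorphisms $\pi_i : G\to S_{p_i}$ and $\pi_s : G\to \Z_s$. A surjective homomorphism carries a generating set to a generating set. Hence $\langle a_i,b_i\rangle = \pi_i(\langle x,y\rangle)=S_{p_i}$ for each $i\le j$, and likewise $\langle a_s,b_s\rangle=\Z_s$. Equivalently, one can argue as in Theorem~\ref{thm:agen}: if some coordinate subgroup were proper, it would lie in a maximal subgroup $M$ of that factor. Then $\langle x,y\rangle$ would lie in the proper subgroup obtained by replacing that factor with $M$, a contradiction.

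For the converse, assume all coordinate pairs generate. Set $N = |G|$, and for each factor $F\in\{S_{p_1},\dots,S_{p_j},\Z_s\}$ let $n_F = N/|F|$. Multiplying by $n_F$ kills every factor other than $F$, because their orders divide $n_F$. It acts as an automorphism on $F$, because $\gcd(n_F,|F|)=1$. Thus $n_F x$ and $n_F y$ have nonzero entries only in the $F$-coordinate, and those entries are $n_F a_F$ and $n_F b_F$. These two elements generate $n_F F = F$, since $\langle n_F a_F, n_F b_F\rangle = n_F\langle a_F,b_F\rangle$. So $\langle x,y\rangle$ contains the embedded copy of each factor $F$, and therefore contains their sum, which is $G$.

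The main (mild) obstacle is making the coprimality step rigorous rather than saying ``easily.'' The key point is that multiplication by $n_F$ is an isomorphism on $F$ and zero elsewhere. Once this is stated cleanly, the cyclic factor $\Z_s$ needs no special treatment: the condition $\langle a_s,b_s\rangle=\Z_s$ simply means $\gcd(a_s,b_s,s)=1$, but we never need that explicit form.
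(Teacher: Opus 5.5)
Your proof is correct and is essentially the paper's argument. The paper only says the proof is analogous to Theorem~\ref{thm:agen}: a proper coordinate subgroup lies in a maximal subgroup of that factor (forward direction), and coprimality of the factor orders gives the converse. Your multiplication-by-$n_F$ step, which is zero on the other factors and an automorphism on $F$, is a rigorous version of the coprimality step the paper leaves as ``easily shown.''
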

\begin{proof}
    The argument follows analogously to the proof of the previous case.
\end{proof}
\begin{cor}
    Let $G=S_{p_1}\times  S_{p_2}\times  \dots\times  S_{p_j}\times  \Z_{s}.$  Then  the set of generating pairs of $G$ is given  $$\mathrm{Gen}(G)=\prod\limits_{i=1}^{j}\mathrm{Gen}(S_{p_i})\times \mathrm{Gen}(\Z_s).$$
\end{cor}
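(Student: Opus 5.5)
The plan is to deduce the corollary directly from Theorem~\ref{thm:bgen}, in the same way the preceding corollary follows from Theorem~\ref{thm:agen}. The only extra work is to make precise the identification behind the symbol $\prod$. Under the decomposition $G = S_{p_1}\times\cdots\times S_{p_j}\times \Z_s$, the set $G\times G$ is in bijection with
\[
(S_{p_1}\times S_{p_1})\times\cdots\times(S_{p_j}\times S_{p_j})\times(\Z_s\times\Z_s)
\]
via the coordinate shuffle
\[
\bigl((a_1,\dots,a_j,a_s),(b_1,\dots,b_j,b_s)\bigr)\mapsto\bigl((a_1,b_1),\dots,(a_j,b_j),(a_s,b_s)\bigr).
\]
I would state this bijection explicitly first and read the equality $\mathrm{Gen}(G)=\prod_{i=1}^{j}\mathrm{Gen}(S_{p_i})\times\mathrm{Gen}(\Z_s)$ as equality of sets after applying it.

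With that convention, the two inclusions are immediate. Take $(x,y)\in\mathrm{Gen}(G)$ with $x=(a_1,\dots,a_j,a_s)$ and $y=(b_1,\dots,b_j,b_s)$. The ``only if'' direction of Theorem~\ref{thm:bgen} gives $\langle a_i,b_i\rangle=S_{p_i}$ for all $1\le i\le j$ and $\langle a_s,b_s\rangle=\Z_s$. So the image of $(x,y)$ lies in the product. Conversely, given pairs $(a_i,b_i)\in\mathrm{Gen}(S_{p_i})$ and $(a_s,b_s)\in\mathrm{Gen}(\Z_s)$, assemble $x$ and $y$ as above; the ``if'' direction of Theorem~\ref{thm:bgen} gives $\langle x,y\rangle=G$.

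Since Theorem~\ref{thm:bgen} was only asserted ``analogously'', I would also record why it holds, because the corollary rests on it. The point is that the orders $|S_{p_1}|,\dots,|S_{p_j}|,s$ are pairwise coprime.

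For necessity, suppose one coordinate pair fails to generate its factor. Then that pair lies in a maximal subgroup $M$ of the factor, so $\langle x,y\rangle$ lies in the proper subgroup obtained by replacing that factor with $M$. For $\Z_s$, a maximal subgroup is $p\Z_s$ for some prime $p\mid s$.

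For sufficiency, let $N_i=|S_{p_i}|$ and $N_s=s$. By the Chinese remainder theorem, choose an integer $c_i$ with $c_i\equiv 1 \pmod{N_i}$ and $c_i\equiv 0$ modulo every other $N_\ell$. Then $c_i x$ and $c_i y$ are $a_i$ and $b_i$ placed in the $i$-th coordinate with zeros elsewhere. Hence $\langle x,y\rangle$ contains each factor $S_{p_i}$ and $\Z_s$, embedded coordinatewise, and therefore equals $G$.

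The main obstacle is not mathematical. It is being careful that ``$\prod$'' means a Cartesian product of sets of pairs, matched to $\mathrm{Gen}(G)$ through the coordinate reordering, rather than a literal equality inside $G\times G$. Once that identification is fixed, the rest is bookkeeping. As a consistency check, it gives $|\mathrm{Gen}(G)|=\prod_{i}|\mathrm{Gen}(S_{p_i})|\cdot|\mathrm{Gen}(\Z_s)|$, which could be combined with Remark~\ref{rem:edge}.
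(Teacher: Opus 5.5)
Your proposal is correct and follows the paper's route: the paper likewise deduces the corollary directly from Theorem~\ref{thm:bgen}, leaving implicit the coordinate-reordering identification that you make explicit. Your Chinese-remainder argument for the sufficiency half of Theorem~\ref{thm:bgen} is a more rigorous version of the paper's brief ``coprime-order factors commute'' justification, but it does not change the overall approach.
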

\begin{proof}
    This follows from the above theorem.
\end{proof}
Here $\mathrm{Gen}(\mathbb{Z}_s)$ denotes the set of generating pairs 
of $\mathbb{Z}_s$ for any $s\in \N$, defined precisely in the next section.

\section{Generating graphs of cyclic groups}\label{sec:4}
In this section, we review the structure of the generating graphs of 
finite cyclic groups, investigated thoroughly in~\cite{Me}, and 
simultaneously generalise the results to looped generating graphs. We 
adopt the notations from~\cite{Me}. We consider $\Z_n = \{0, 1, 2, 
\dots, (n-1)\}$, the group of integers modulo $n$. Let $n = p_1^{e_1} 
p_2^{e_2} \cdots p_r^{e_r}$ and $n_0 = p_1 p_2 \cdots p_r$, where 
$p_1 < p_2 < \cdots < p_r$ are distinct primes. 

\subsection{Looped generating graph of $\mathbb{Z}_n$}

We modify the definition of generating graph for $\Gamma(\Z_n)$ by allowing loops at 
vertices which generate $\Z_n$, and call it the \emph{looped generating 
graph}, denoted $\widehat{\Gamma}(\Z_n)$. Note that loops arise only 
for cyclic groups, since a loop at $x$ means $\langle x \rangle = G$, 
which forces $G$ to be cyclic. We adopt the convention that a loop 
contributes $1$ to the degree of its vertex.

We recall a few fundamental results from the paper~\cite{Me}, which will be needed later. A pair of elements $a \neq b \in V$ are adjacent in $\Gamma(\Z_n)$ if and only if 
$\langle a, b \rangle = \Z_n$, or equivalently, $\gcd(a, b, n) = 1$. 
Thus the set of generating pairs is given by
\begin{equation}\label{eq:class}
\mathrm{Gen}(\Z_n) = \{(a,b) \in \Z_n \times \Z_n \,:\,  \gcd(a,b) \in 
U(n)\},
\end{equation}
where $U(n)$ denotes the group of units of $\Z_n$. The equivalence 
relation on the vertex set $V$ of $\Gamma(\Z_n)$ is defined by 
$$a \sim b \text{ whenever } \gcd(a, n_0) = \gcd(b, n_0).$$ 
Corresponding to each divisor $d$ of $n_0$, the equivalence class is given by 
$$[d] = \{a \in \Z_n \,:\,  \gcd(a, n_0) = d\}.$$ 
Moreover, there are exactly $2^r$ such classes partitioning the vertex set $V$ of $\Gamma(\Z_n)$. The class $[1]$ 
induces a complete graph, and $[d_i]$ induces an empty graph for all 
$d_i \neq 1$. In addition, any two members of the same class have 
identical neighbourhoods in $\Gamma(\Z_n)$.

\begin{theorem}[Theorem~2.1,~\cite{Me}]
For a given $n$, the size of each equivalence class is 
$$|[d_i]| = \frac{n}{n_0}\varphi\!\left(\frac{n_0}{d_i}\right),$$
where $d_i \mid n_0$ and $1 \leq i \leq 2^r.$
\end{theorem}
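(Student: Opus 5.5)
The plan is to count the elements $a\in\Z_n$ with $\gcd(a,n_0)=d_i$ directly, using the Chinese Remainder Theorem. The condition depends only on $a \bmod n_0$. Indeed, for each prime $p_j$ we have $p_j\mid a$ if and only if $p_j \mid (a \bmod n_0)$, and $\gcd(a,n_0)$ is the product of those $p_j$ dividing $a$.

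First, I use the reduction map $\Z_n\to\Z_{n_0}$, $a\mapsto a \bmod n_0$. It is a surjective homomorphism because $n_0\mid n$. Each fibre is a coset of the kernel, which has order $n/n_0$. Hence
\[
|[d_i]| = \frac{n}{n_0}\,\bigl|\{c\in\Z_{n_0} : \gcd(c,n_0)=d_i\}\bigr|.
\]

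Second, I count $c\in\Z_{n_0}$ with $\gcd(c,n_0)=d_i$. Write $c=d_i c'$. Since $n_0$ is squarefree, $d_i$ and $n_0/d_i$ are coprime. The condition $\gcd(c,n_0)=d_i$ is equivalent to $d_i\mid c$ together with $\gcd(c',n_0/d_i)=1$. Here the squarefreeness ensures that no prime of $d_i$ can divide $n_0/d_i$, so no extra constraint arises on $c'$ from the primes of $d_i$.

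The multiples of $d_i$ in $\Z_{n_0}$ are $d_i c'$ with $c'$ ranging over $\Z_{n_0/d_i}$, and these are all distinct. Among them, the admissible ones are exactly those with $c'\in U(n_0/d_i)$. This gives $\varphi(n_0/d_i)$ elements.

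Multiplying the two counts gives the formula. The number of classes is $2^r$, since there is one class for each divisor of the squarefree $n_0$. The only point needing care is the middle step, namely checking that $\gcd(d_ic',n_0)=d_i$ is equivalent to $\gcd(c',n_0/d_i)=1$. This uses the squarefreeness of $n_0$. A prime $p\mid d_i$ divides $n_0$ exactly once, so the exponent of $p$ in $\gcd(d_i c', n_0)$ is always $1$, whatever $c'$ is.
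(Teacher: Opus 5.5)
Your proof is correct and complete. Reducing modulo $n_0$ is a surjection whose fibres have size $n/n_0$, and $\gcd(\cdot,n_0)$ is constant on each fibre. Counting $c=d_ic'$ with $\gcd(c',n_0/d_i)=1$ then gives exactly $\frac{n}{n_0}\varphi(n_0/d_i)$, and the edge cases $d_i=1$ (giving $\varphi(n)$) and $d_i=n_0$ (giving the $n/n_0$ multiples of $n_0$) come out correctly.

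The paper itself gives no proof of this statement; it simply imports it as Theorem~2.1 of \cite{Me}. So there is no in-paper argument to compare against, and your proposal supplies a self-contained justification.

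Two small remarks on your write-up:
\begin{itemize}
\item The Chinese Remainder Theorem announced in your first sentence is never actually used, so you can drop that mention.
\item Squarefreeness of $n_0$ is not needed in the middle step. The identity $\gcd(d_ic',n_0)=d_i\gcd(c',n_0/d_i)$ holds for any divisor $d_i$ of any integer, so the count $\varphi(N/d)$ of residues $c$ modulo $N$ with $\gcd(c,N)=d$ is completely general.
\end{itemize}
Squarefreeness matters only in two places: identifying $\gcd(a,n_0)$ with the product of the primes $p_j$ dividing $a$, and obtaining exactly $2^r$ classes.
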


\begin{theorem}[Theorem~3.6,~\cite{Me}]\label{thm:deg}
In $\Gamma(\Z_n)$, the vertex degree of $d_i \neq 1$ is given by 
$$\deg(d_i) = \frac{n}{d_i}\varphi(d_i).$$
\end{theorem}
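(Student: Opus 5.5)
The plan is to identify the neighbourhood of the vertex $d_i$ explicitly and then count it with a Chinese Remainder Theorem argument. Fix a divisor $d_i\neq 1$ of $n_0$. By the adjacency criterion~\eqref{eq:class}, a vertex $b\neq d_i$ is adjacent to $d_i$ if and only if $\gcd(d_i,b,n)=1$. Since $d_i\mid n_0\mid n$, this means $\gcd(d_i,b)=1$. As $d_i$ is squarefree, this is equivalent to saying that no prime $p\mid d_i$ divides $b$. Note that the condition depends only on the residue of $b$ modulo $d_i$, and this is well defined on $\Z_n$ because $d_i\mid n$.

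Next I check that the requirement $b\neq d_i$ costs nothing. Since $d_i\neq 1$, we have $\gcd(d_i,d_i)=d_i\neq 1$, so $d_i$ itself fails the condition. Hence, with no loops present in $\Gamma(\Z_n)$,
\[
\deg(d_i)=\bigl|\{b\in\Z_n : \gcd(b,d_i)=1\}\bigr|.
\]

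Finally, I count this set. The reduction map $\Z_n\to\Z_{d_i}$ is surjective, and each fibre has exactly $n/d_i$ elements. The condition $\gcd(b,d_i)=1$ says exactly that the image of $b$ lies in $U(d_i)$, which has $\varphi(d_i)$ elements. Therefore the count is $\frac{n}{d_i}\varphi(d_i)$. Equivalently, by inclusion–exclusion it equals $n\prod_{p\mid d_i}(1-1/p)$.

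There is no real obstacle here. The only points needing care are that $d_i\mid n$, which makes the reduction modulo $d_i$ well defined, and that $d_i\neq1$, which ensures the vertex is not counted among its own neighbours. Since any two members of a class have identical neighbourhoods, the same degree holds for every vertex in $[d_i]$.
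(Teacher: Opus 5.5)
Your proof is correct. The reduction $\gcd(d_i,b,n)=\gcd(d_i,b)$, the observation that $d_i$ is automatically excluded from its own neighbourhood because $\gcd(d_i,d_i)=d_i\neq 1$, and the fibre count over $\Z_n\to\Z_{d_i}$ are all sound.

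The paper itself gives no proof of this statement; it quotes it as Theorem~3.6 of~\cite{Me}. So the comparison is with the derivation that fits the paper's framework, which works blockwise. The neighbourhood of $d_i$ is the union of the classes $[d_j]$ with $\gcd(d_i,d_j)=1$, i.e.\ those with $d_j\mid n_0/d_i$. Using the class-size formula $|[d_j]|=\frac{n}{n_0}\varphi(n_0/d_j)$ one gets
\[
\deg(d_i)=\sum_{d_j\mid n_0/d_i}\frac{n}{n_0}\,\varphi\!\left(\frac{n_0}{d_j}\right)=\frac{n}{n_0}\,\varphi(d_i)\sum_{e\mid n_0/d_i}\varphi(e)=\frac{n}{d_i}\,\varphi(d_i).
\]
The middle step uses multiplicativity of $\varphi$ on the squarefree $n_0$, and the last uses $\sum_{e\mid m}\varphi(e)=m$.

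That route shows the degree as the row sum of the quotient matrix $T^A$, which is the object the later spectral computations rely on. Your fibre count is shorter and self-contained: it needs neither the class sizes nor Gauss's identity. It also makes clear that the degree is simply $n\prod_{p\mid d_i}(1-1/p)$, the number of elements of $\Z_n$ coprime to $d_i$.
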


We note that the edge set $\widehat{E}$ of $\widehat{\Gamma}(\Z_n)$,  includes all 
pairs $\{a,a\}$ for every $a \in U(n)$, in addition to the edges of 
$\Gamma(\Z_n)$, so $\mathrm{Gen}(\Z_n)$ extends accordingly. The 
equivalence classes and their sizes remain unchanged. Each generator 
$a \in [1]$ acquires a loop, contributing $1$ to its degree, giving 
$\deg(a) = (n-1) + 1 = n = \frac{n}{1}\varphi(1)$. Thus 
Theorem~\ref{thm:deg} extends uniformly to all $d_i \mid n_0$, 
including $d_i = 1$.

\begin{pro}
The number of edges in $\widehat{\Gamma}(\mathbb{Z}_n)$ is given by
$$|\widehat{E}| = \frac{\varphi(n)}{2}\biggl(\frac{n}{n_0}\sigma(n_0) 
- 1\biggr) + \varphi(n),$$
where $\sigma(n_0)$ denotes the sum of divisors of $n_0$.
\end{pro}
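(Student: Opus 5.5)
The plan is to apply the handshake lemma to $\widehat{\Gamma}(\mathbb{Z}_n)$ class by class, using the partition of the vertex set into the $2^r$ equivalence classes $[d]$, $d\mid n_0$. The first step is to handle the loop convention. A loop contributes $1$ to the degree of its vertex and counts as a single edge. If $E^o$ denotes the non-loop edges and $\mathcal{L}$ the loops, then
\[
\sum_{a\in\mathbb{Z}_n}\deg(a)=2|E^o|+|\mathcal{L}|.
\]
Loops occur exactly at the generators, i.e.\ on the class $[1]=U(n)$, so $|\mathcal{L}|=\varphi(n)$. Hence
\[
|\widehat{E}|=|E^o|+|\mathcal{L}|=\tfrac12\Bigl(\sum_a\deg(a)+\varphi(n)\Bigr),
\]
and the statement reduces to showing $\sum_a \deg(a)=\varphi(n)\,\frac{n}{n_0}\sigma(n_0)$. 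Indeed, $\tfrac12\bigl(\varphi(n)\tfrac{n}{n_0}\sigma(n_0)+\varphi(n)\bigr)=\tfrac{\varphi(n)}{2}\bigl(\tfrac{n}{n_0}\sigma(n_0)-1\bigr)+\varphi(n)$.

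Next, compute the degree sum. Members of the same class have identical neighbourhoods, and the extended form of Theorem~\ref{thm:deg} gives $\deg(a)=\frac{n}{d}\varphi(d)$ for every $a\in[d]$, including $d=1$ with its loop. Combining this with Theorem~2.1 of~\cite{Me} for the class sizes gives
\[
\sum_a\deg(a)=\sum_{d\mid n_0}\frac{n}{n_0}\varphi\!\Bigl(\frac{n_0}{d}\Bigr)\cdot\frac{n}{d}\varphi(d).
\]

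The key simplification is that $n_0$ is squarefree, so $d$ and $n_0/d$ are coprime. Multiplicativity then gives $\varphi(n_0/d)\varphi(d)=\varphi(n_0)$ for every $d\mid n_0$. Using also $\frac{n}{n_0}\varphi(n_0)=\varphi(n)$, which holds because $n$ and $n_0$ have the same prime divisors, the sum becomes
\[
\varphi(n)\,n\sum_{d\mid n_0}\frac1d=\varphi(n)\,n\,\frac{\sigma(n_0)}{n_0},
\]
where the last equality uses the divisor-pairing identity $\sum_{d\mid n_0}1/d=\sigma(n_0)/n_0$. This is exactly the required degree sum.

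The only delicate point I expect is the bookkeeping of loops. One must use the degree contribution $1$ in the handshake sum while counting each loop once as an edge, which is why $\varphi(n)$ appears both inside the parenthesis and as a separate term. Everything else is a direct substitution of the stated class sizes and degrees.
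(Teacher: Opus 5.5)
Your argument is correct, but it takes a different route from the paper. The paper's proof is a one-line citation. It takes the term $\frac{\varphi(n)}{2}\bigl(\frac{n}{n_0}\sigma(n_0)-1\bigr)$ directly as $|E(\Gamma(\mathbb{Z}_n))|$ from Corollary~3.2.4 of \cite{Me}, and adds $\varphi(n)$ for the loops at the generators.

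You instead rederive that edge count. You use the class sizes $\frac{n}{n_0}\varphi(n_0/d)$ and the degrees $\frac{n}{d}\varphi(d)$ with the handshake lemma, and collapse the sum using two identities:
\begin{itemize}
\item $\varphi(d)\varphi(n_0/d)=\varphi(n_0)$, which holds because $n_0$ is squarefree;
\item $\sum_{d\mid n_0}1/d=\sigma(n_0)/n_0$.
\end{itemize}
Subtracting the $\varphi(n)$ loop contributions and halving gives exactly $|E^o|=|E(\Gamma(\mathbb{Z}_n))|$. So your computation in effect reproves the cited corollary.

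The paper's version is shorter but cannot be checked without \cite{Me}. Yours depends only on the two more basic cited facts, class sizes and degrees, both of which are easy to verify directly. It also makes the degree-sum identity $\sum_a\deg(a)=\varphi(n)\frac{n}{n_0}\sigma(n_0)$ explicit. Finally, it confirms that the convention ``a loop adds $1$ to the degree and counts as one edge'' is consistent with the extended degree formula at $d=1$. The paper relies on that same convention later, in the modified Kronecker edge-count formula.
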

\begin{proof}
The first term equals $|E|$ of $\Gamma(\mathbb{Z}_n)$ from 
\cite[Corollary~3.2.4]{Me}, and the second term $\varphi(n)$ counts 
the loops added at each generator.
\end{proof}
% \begin{rem}
%    
% \end{rem}

\subsection{Adjacency and Laplacian Matrix}
For a graph \(\mathcal{G}\), let \(A(\G)\) and \(L(\G)\) denote its adjacency and Laplacian matrices, respectively. We now investigate the spectra of the adjacency and Laplacian matrices of \(\widehat{\Gamma}(\mathbb{Z}_n)\), which will be needed later in our study of abelian groups. Throughout, $J$ will denote the all-1 matrix, $\bf 0$ denote the all 0-matrix, and $I$ is the identity matrix.

\subsection*{Adjacency Matrix}
Based on our assumption, the adjacency matrix for the graph $\widehat\Gamma(\Z_n),$ denoted by $\widehat A,$ is a $0-1$ matrix and it is symmetric by the definition. The entries $a_{ij}$ are given by
\[a_{ij}=
\begin{dcases}
1&\quad\gcd(v_{i},v_{j})\in U(n);\\
0 &\quad\text{otherwise.}\\
\end{dcases}
\]
In the block form, $\widehat A=\Big[[d_{ij}]\Big],$ where $[d_{ij}]$ represents the block matrix corresponding to the classes $[d_{i}]$ and $[d_{j}],$ is given by 
        \[[d_{ij}]=
        \begin{dcases}
 J&\quad\gcd(d_{i},d_{j})=1;\\
 {\bf 0}&\quad\text{otherwise},\\
        \end{dcases}
        \]
where $J$ and $\bf 0$ are the matrices of order $\Big|[d_{i}]\Big|\times\Big|[d_{j}]\Big|.$ 
\begin{rem}
With respect to the equitable partition $\{d_1=1,d_2,\dots, d_{2^r}\},$ the quotient matrix of $\widehat A,$ denoted by $T^{\widehat A},$ is given by $T^A-D,$ where $T^A$ is the quotient matrix of the adjacency matrix $A$ of $\Gamma(\Z_n)$ and $D=\mathrm{diag}(-1,0,0,\dots, 0)$ (see Section 4.2,~\cite{Me}).
{$$T^A=\begin{pmatrix}
    \varphi(n)-1&a_1&a_2&\dots&\dots&a_{2^r-1}\\
    \varphi(n)&0&\theta_{23}&\dots&\dots&\theta_{2(2^r-1)}\\
    \varphi(n)&\theta_{32}&0&\theta_{34}&\dots&\theta_{3(2^r-1)}\\
    \vdots&\vdots&\vdots&\ddots&\vdots\\
    \varphi(n)&\theta_{2^r2}&\theta_{2^r3}&\dots&\dots&0
        \end{pmatrix},$$}
        where {\[ \theta_{ij}=\begin{dcases}
        a_{j-1}&\quad \text{ if } \gcd(d_i,d_j)=1;\\
        0 &\quad \text{ if } \gcd(d_i,d_j)\neq 1.
    \end{dcases}\]}
   and {$a_i=\Big|[d_{i+1}]\Big|$} for $1\leq i\leq 2^r-1.$

Further, we note that in \cite[Proposition 4.2.2]{Me}, the matrix $T^A$ is similar to $Q_n=\widetilde Q_n+D,$ and $\widetilde Q_n$ is given by 
$$\widetilde Q_n=\begin{pmatrix}
    \varphi(n)&\rho_{1,2}&\dots &\rho_{1,2^r-1}&\rho_{1,2^r}\\
    \rho_{1,2}&0&\dots&\rho_{2,2^r-1}&\rho_{2,2^r}\\
    \vdots&\vdots&\ddots&\vdots&\vdots\\
    \rho_{1,2^r}&\rho_{2,2^r}&\dots&\rho_{2^r-1,2^r}&0
\end{pmatrix}.$$
For $1\leq \ell\leq 2^r-1,$ $a_\ell=\frac{n}{n_0}\varphi\left(\frac{n_0}{d_{\ell+1}}\right)$ and thus $$\rho_{i,j}=\begin{dcases}
    \sqrt{a_{i-1}a_{j-1}}=\frac{n}{n_0}\sqrt{\varphi\left(\frac{n_0}{d_i}\right)\varphi\left(\frac{n_0}{d_j}\right)} & \text{ if } \gcd(d_i,d_j)=1;\\
    \sqrt{\varphi(n)a_{j-1}}=\frac{n}{n_0}\sqrt{\varphi(n_0)\varphi\left(\frac{n_0}{d_j}\right)}& \text{ if } d_i=1 \text{ and } d_j\neq 1;\\
    \quad 0& \text{otherwise}.
\end{dcases}$$
\end{rem}

From the above discussion, we have the following result.
\begin{pro}
    The quotient matrix $T^{\widehat A}$ is similar to $\widetilde Q_n.$
\end{pro}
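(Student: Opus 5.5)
The approach is to pass the similarity of \cite[Proposition 4.2.2]{Me} from $T^A$ to $T^{\widehat A}$ by observing that it is a diagonal conjugation, so it commutes with adding a diagonal matrix. By the preceding remark, $T^{\widehat A}=T^A-D$ with $D=\mathrm{diag}(-1,0,\dots,0)$. That is, $T^{\widehat A}$ is obtained from $T^A$ by adding $1$ to the $(1,1)$ entry, which turns $\varphi(n)-1$ into $\varphi(n)$. This matches the fact that each generator now also counts its own loop. Written as a matrix, $T^{\widehat A}$ has first row $(\varphi(n),a_1,\dots,a_{2^r-1})$, first column $(\varphi(n),\varphi(n),\dots,\varphi(n))^T$, and entries $\theta_{ij}$ elsewhere.

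Next, I will write the similarity explicitly. Set $a_0:=\varphi(n)=|[1]|$, so that $T^{\widehat A}_{ij}=a_{j-1}\epsilon_{ij}$, where $\epsilon_{ij}=1$ if $\gcd(d_i,d_j)=1$ and $\epsilon_{ij}=0$ otherwise. Here $\epsilon_{11}=1$ because $\gcd(1,1)=1$. This is the standard form of a quotient matrix: the $(i,j)$ entry is the size of class $j$ times a symmetric $0$--$1$ indicator. Take $P=\mathrm{diag}(\sqrt{a_0},\sqrt{a_1},\dots,\sqrt{a_{2^r-1}})$, which is invertible since every class is nonempty. Then
$$(P\,T^{\widehat A}P^{-1})_{ij}=\sqrt{a_{i-1}}\,a_{j-1}\epsilon_{ij}/\sqrt{a_{j-1}}=\sqrt{a_{i-1}a_{j-1}}\,\epsilon_{ij}.$$
I expect this to be exactly the similarity used in \cite{Me} for $T^A$, since diagonal conjugation leaves diagonal entries unchanged and so respects $T^A=T^{\widehat A}+D$.

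Finally, I will check the result entrywise against $\widetilde Q_n$. On the diagonal, the $(1,1)$ entry is $a_0=\varphi(n)$. Every other diagonal entry is $0$, because $\gcd(d_i,d_i)=d_i\neq 1$ for $i\ge 2$. Off the diagonal with $i,j\geq 2$, the entry is $\sqrt{a_{i-1}a_{j-1}}$ when $\gcd(d_i,d_j)=1$ and $0$ otherwise. The entries with $i=1$ or $j=1$ are $\sqrt{\varphi(n)a_{j-1}}$, using $\gcd(1,d_j)=1$. These are precisely the $\rho_{i,j}$ in the three cases of the definition, so the conjugated matrix equals $\widetilde Q_n$.

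The only step needing care is the bookkeeping. The similarity must be given by a diagonal matrix, so that the diagonal shift $D$ passes through unchanged. The index conventions also have to line up: $a_\ell=|[d_{\ell+1}]|$, together with $a_0=\varphi(n)$, must agree with $\rho_{i,j}$. If the transformation in \cite{Me} were not diagonal, I would simply use the $P$ above directly rather than invoking that proposition.
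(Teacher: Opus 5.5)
Your proposal is correct and is essentially the paper's argument. The paper writes $T^{\widehat A}=T^A-D$, imports the similarity $T^A=P^{-1}Q_nP$ with the same diagonal $P=\mathrm{diag}(\sqrt{\varphi(n)},\sqrt{a_1},\dots,\sqrt{a_{2^r-1}})$ from \cite{Me}, and uses that the diagonal matrices $P$ and $D$ commute to get $T^{\widehat A}=P^{-1}(Q_n-D)P=P^{-1}\widetilde Q_nP$. Your entrywise check $(PT^{\widehat A}P^{-1})_{ij}=\sqrt{a_{i-1}a_{j-1}}\,\epsilon_{ij}$ verifies this directly rather than citing the earlier result, which makes the argument self-contained.
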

\begin{proof}
Since $T^{\widehat A}=T^A-D$ and $T^A=P^{-1}Q_nP,$
where\[P=\operatorname{diag}\!\left(\sqrt{\varphi(n)},\sqrt{a_1},\dots,\sqrt{a_{2^r-1}}\right),\] it follows, as \(P\) and \(D\) are diagonal matrices, that \(T^{\widehat A}\) is similar to \(Q_n-D=\widetilde Q_n\).
\end{proof}

The matrix $\widetilde Q_n$ is a well structured matrix of size $2^r\times 2^r$.
\begin{theorem}[Proposition 4.8, Theorem 4.9,~\cite{Me}]
    The matrix $\widetilde Q_{n}$ is given by 
    \begin{equation*}
        \widetilde Q_{n}=\frac{n}{n_0}\widetilde Q_{n_0},
       \end{equation*}
where $\widetilde Q_{n_0}=\sqrt {\varphi(n_0)}\Bigg(U_1\otimes U_2\otimes \dots \otimes U_r\Bigg)$ and for $ 1\leq j\leq r,$ 
$U_i=\begin{pmatrix} \sqrt{\varphi(p_{r-{(i-1)}})}&1\\1&0
\end{pmatrix}.$
\end{theorem}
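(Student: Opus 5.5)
The plan is to verify the identity entrywise, after first writing the entries of $\widetilde Q_n$ in one uniform formula. Index rows and columns by the divisors $d$ of $n_0$. I claim the $(d_i,d_j)$ entry of $\widetilde Q_n$ is
$$\frac{n}{n_0}\sqrt{\varphi\!\left(\tfrac{n_0}{d_i}\right)\varphi\!\left(\tfrac{n_0}{d_j}\right)}\quad\text{if }\gcd(d_i,d_j)=1,$$
and $0$ otherwise. This follows from the case list in the Remark defining $\rho_{i,j}$:
\begin{enumerate}
\item When $d_i=d_j=1$, the formula gives $\frac{n}{n_0}\varphi(n_0)=\varphi(n)$, which is the $(1,1)$ entry.
\item When exactly one of $d_i,d_j$ is $1$, the formula is the second case of $\rho_{i,j}$, since $\varphi(n_0/1)=\varphi(n_0)$.
\item When $d_i=d_j\neq 1$, we have $\gcd(d_i,d_j)\neq1$, matching the zero diagonal.
\end{enumerate}
Because every entry carries the common factor $n/n_0$, and $\widetilde Q_{n_0}$ is given by the same formula with $n=n_0$, this proves $\widetilde Q_n=\frac{n}{n_0}\widetilde Q_{n_0}$.

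For the Kronecker factorization, I identify each divisor $d\mid n_0$ with the $0/1$ vector $(\epsilon_1,\dots,\epsilon_r)$, where $\epsilon_i=1$ iff $p_{r-(i-1)}\mid d$. The rows and columns of $U_1\otimes\cdots\otimes U_r$ are indexed by such vectors in lexicographic order, and the divisors $d_1,\dots,d_{2^r}$ are to be listed in the matching order. If the paper's fixed ordering differs, the two matrices are permutation-similar, which is harmless for the spectral purposes.

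By the definition of the Kronecker product, the $(d,d')$ entry of $U_1\otimes\cdots\otimes U_r$ is the product over primes $p\mid n_0$ of the entry of $\begin{pmatrix}\sqrt{\varphi(p)}&1\\1&0\end{pmatrix}$ at position $(\epsilon_p(d),\epsilon_p(d'))$. This product is $0$ exactly when some prime divides both $d$ and $d'$, that is, when $\gcd(d,d')\neq1$. Otherwise it equals $\prod_{p\nmid dd'}\sqrt{\varphi(p)}$.

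It remains to check, for coprime $d,d'$, that
$$\sqrt{\varphi(n_0/d)\,\varphi(n_0/d')}=\sqrt{\varphi(n_0)}\prod_{p\nmid dd'}\sqrt{\varphi(p)}.$$
Since $n_0$ is squarefree, multiplicativity gives $\varphi(n_0/d)=\prod_{p\nmid d}\varphi(p)$, and similarly for $d'$. Each prime dividing $n_0$ falls into exactly one of three classes: it divides $d$ only, divides $d'$ only, or divides neither (coprimality excludes dividing both). Primes in the first two classes appear once in $\varphi(n_0/d)\varphi(n_0/d')$, while primes dividing neither appear twice. Hence the product equals $\varphi(n_0)\prod_{p\nmid dd'}\varphi(p)$, and taking square roots gives the claim. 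Combined with the first step, this yields $\widetilde Q_{n_0}=\sqrt{\varphi(n_0)}\,(U_1\otimes\cdots\otimes U_r)$.

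The only real obstacle is bookkeeping, namely making the divisor ordering agree with the tensor indexing. The reversed prime order $p_{r-(i-1)}$ in $U_i$ is exactly what makes $U_1$ correspond to the most significant bit. I would state the ordering explicitly, or else note that the identity holds up to a permutation similarity.
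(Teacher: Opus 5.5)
Your proof is correct. The paper gives no proof of this statement; it is quoted from the earlier work \cite{Me}, so there is no in-paper argument to compare against.

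Your route is a direct closed-form check. The uniform entry formula $\frac{n}{n_0}\sqrt{\varphi(n_0/d)\varphi(n_0/d')}$ for coprime $d,d'$, and zero otherwise, gives the scaling $\widetilde Q_n=\frac{n}{n_0}\widetilde Q_{n_0}$ immediately. The Kronecker factorization then reduces to the identity $\varphi(n_0/d)\varphi(n_0/d')=\varphi(n_0)\prod_{p\nmid dd'}\varphi(p)$ for coprime $d,d'$, and your sorting of the primes into three classes establishes it correctly.

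The reversed labelling $p_{r-(i-1)}$ in $U_i$ suggests the original argument is an induction on $r$ that peels off the largest prime. Take $n_0=mp$ with $p=p_r$, and list the divisors of $m$ before their multiples by $p$. Your entry formula then gives
\[
\widetilde Q_{mp}=\begin{pmatrix}\varphi(p)\,\widetilde Q_m&\sqrt{\varphi(p)}\,\widetilde Q_m\\ \sqrt{\varphi(p)}\,\widetilde Q_m&\mathbf{0}\end{pmatrix}=\sqrt{\varphi(p)}\begin{pmatrix}\sqrt{\varphi(p)}&1\\ 1&0\end{pmatrix}\otimes\widetilde Q_m ,
\]
and the induction closes because $\varphi(n_0)=\varphi(p)\varphi(m)$.

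The inductive version builds the divisor ordering into the recursion. Yours treats the ordering as an explicit choice and identifies every entry at once. Your remark that any other ordering changes the matrix only by a permutation similarity is enough for how the statement is used later, since only its spectrum enters the subsequent results.
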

Then the properties of Kronecker product of matrices gives the following result.
\begin{cor}[Corollary 4.10,~\cite{Me}]\label{cor:Etensor}
    The eigenvalues of $\widetilde{Q}_{n_0}$ are of the form
    $$\sqrt{\varphi(n_0)}\prod_{i=1}^{r} u_{i,\ell_i},$$
    parametrized over all choices $(\ell_1,\ldots,\ell_r)\in\{1,2\}^r$, 
    where for each $i$,
    $$u_{i,1} = \frac{\sqrt{\varphi(p_{r+1-i})} + \sqrt{4 + 
    \varphi(p_{r+1-i})}}{2}, \qquad u_{i,2} = \frac{\sqrt{\varphi
    (p_{r+1-i})} - \sqrt{4 + \varphi(p_{r+1-i})}}{2}$$
    are the two eigenvalues of the $i$-th factor matrix $U_i$.
    \end{cor}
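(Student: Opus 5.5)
We first diagonalize each factor $U_i$ and then use the multiplicativity of eigenvalues under the Kronecker product. Each $U_i=\begin{pmatrix}\sqrt{\varphi(p_{r+1-i})}&1\\1&0\end{pmatrix}$ is a real symmetric $2\times 2$ matrix. Its characteristic polynomial is
\[
\lambda^2-\sqrt{\varphi(p_{r+1-i})}\,\lambda-1 .
\]
The discriminant $\varphi(p_{r+1-i})+4$ is positive, so the quadratic formula gives two distinct real roots, namely the stated $u_{i,1}$ and $u_{i,2}$. Since $U_i$ is symmetric with distinct eigenvalues, it is orthogonally diagonalizable: $U_i=O_i\Lambda_iO_i^{T}$ with $\Lambda_i=\operatorname{diag}(u_{i,1},u_{i,2})$.

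Next we apply the mixed-product property $(A\otimes B)(C\otimes D)=AC\otimes BD$ repeatedly (by induction on $r$). This gives
\[
U_1\otimes\cdots\otimes U_r=(O_1\otimes\cdots\otimes O_r)(\Lambda_1\otimes\cdots\otimes\Lambda_r)(O_1\otimes\cdots\otimes O_r)^{T},
\]
and $O_1\otimes\cdots\otimes O_r$ is again orthogonal. The middle factor $\Lambda_1\otimes\cdots\otimes\Lambda_r$ is diagonal, and its $2^r$ diagonal entries are exactly the products $\prod_i u_{i,\ell_i}$ over $(\ell_1,\ldots,\ell_r)\in\{1,2\}^r$. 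This is the same reasoning as Corollary~\ref{cor:kspec}, now at the matrix level.

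Finally, by the preceding theorem, $\widetilde Q_{n_0}=\sqrt{\varphi(n_0)}\,(U_1\otimes\cdots\otimes U_r)$. Scaling a matrix by a constant scales each eigenvalue by that constant, so the eigenvalues of $\widetilde Q_{n_0}$ are $\sqrt{\varphi(n_0)}\prod_i u_{i,\ell_i}$, counted with multiplicity over all $2^r$ index choices. This matches the size $2^r\times 2^r$ of $\widetilde Q_{n_0}$.

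There is no real obstacle. The only point requiring care is keeping track of the reversed prime indexing $p_{r+1-i}$ in the factor $U_i$, which is consistent with the theorem's definition $U_i$ built from $p_{r-(i-1)}$.
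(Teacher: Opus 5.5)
Your proposal is correct and follows the same route as the paper. The paper simply invokes the fact that the eigenvalues of a Kronecker product are the products of the factors' eigenvalues, then rescales by $\sqrt{\varphi(n_0)}$. You make those steps explicit: you compute the characteristic polynomial $\lambda^2-\sqrt{\varphi(p_{r+1-i})}\,\lambda-1$ of each $U_i$, and you justify the product rule via orthogonal diagonalization and the mixed-product property.
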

From the above discussion we conclude the follwing result for $\widehat\Gamma(\Z_n)$.
\begin{theorem}\label{thm:Aloop}
The characteristic polynomial of the matrix $\widehat A$ of $\widehat \Gamma(\Z_n)$ is given by  
$$\phi_{\widehat A}(x)=\frac{n}{n_0}\phi_{\widetilde Q_{n_0}}(x)\,{x^{n-2^r}},$$
where $\phi_{\widetilde Q_{n_0}}(x)$ is the characteristic polynomial of $\widetilde Q_{n_0}$ and $r$ denotes the number of distinct prime divisors of $n.$
\end{theorem}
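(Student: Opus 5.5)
The plan is to write $\widehat A$ in factored form through the class partition $\{[d_1],\dots,[d_{2^r}]\}$, which reduces its spectrum to that of the quotient matrix $T^{\widehat A}$. Order the vertices of $\widehat\Gamma(\Z_n)$ class by class. Let $S$ be the $n\times 2^r$ characteristic matrix of the partition: its $(v,i)$ entry is $1$ if $v\in[d_i]$ and $0$ otherwise. Let $W=(w_{ij})$ be the $2^r\times 2^r$ matrix with $w_{ij}=1$ if $\gcd(d_i,d_j)=1$ and $w_{ij}=0$ otherwise. By the block description of $\widehat A$ (every block is $J$ or $\mathbf 0$, and the loops make the $[1]$-block a full $J$), we get
$$\widehat A = S\,W\,S^{T}.$$
Also $S^TS=\Delta:=\operatorname{diag}(|[d_1]|,\dots,|[d_{2^r}]|)$, and a direct check shows $W\Delta = T^{\widehat A}$: row $i$ of $W\Delta$ records the row sums of the blocks in block-row $i$.

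The key step uses the standard fact that $XY$ and $YX$ have the same nonzero eigenvalues with multiplicities, in the form
$$x^{2^r}\det(xI_n-XY)=x^{n}\det(xI_{2^r}-YX).$$
Take $X=S$ and $Y=WS^T$, so that $XY=\widehat A$ and $YX=WS^TS=W\Delta=T^{\widehat A}$. This gives
$$\phi_{\widehat A}(x)=x^{\,n-2^r}\,\phi_{T^{\widehat A}}(x).$$
The identity handles multiplicities automatically, which is exactly what Theorem~\ref{thm:specA} does not provide. I expect this multiplicity bookkeeping to be the only real obstacle, and the $XY/YX$ identity is how I intend to get past it. As a sanity check, rows of $\widehat A$ within a class coincide, so $\operatorname{rank}\widehat A\le 2^r$, which is consistent with the factor $x^{n-2^r}$.

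To finish, use the preceding proposition, which says $T^{\widehat A}$ is similar to $\widetilde Q_n$, so $\phi_{T^{\widehat A}}=\phi_{\widetilde Q_n}$. Then invoke the theorem $\widetilde Q_n=\frac{n}{n_0}\widetilde Q_{n_0}$. The factor $\frac{n}{n_0}$ in the statement must be read as a scaling of the spectrum: the eigenvalues of $\widetilde Q_n$ are $\frac{n}{n_0}$ times those of $\widetilde Q_{n_0}$, as listed in Corollary~\ref{cor:Etensor}. Precisely,
$$\phi_{\widetilde Q_n}(x)=\left(\tfrac{n}{n_0}\right)^{2^r}\phi_{\widetilde Q_{n_0}}\!\left(\tfrac{n_0}{n}x\right).$$
I will state this reading explicitly, since a literal scalar multiple of the characteristic polynomial would not be monic. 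With it, the spectrum of $\widehat A$ consists of $0$ with multiplicity $n-2^r$ together with the $2^r$ values $\frac{n}{n_0}\sqrt{\varphi(n_0)}\prod_i u_{i,\ell_i}$.
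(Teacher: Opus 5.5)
Your proof is correct, and it takes a different and tighter route than the paper.

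The paper's argument has two steps. First, it notes that $\mathrm{rank}(\widehat A)=\mathrm{rank}(\widetilde Q_n)$, because every class block is $J$ or $\mathbf 0$. Second, it appeals to Theorem~\ref{thm:specA}. As stated, that theorem only says each eigenvalue of the quotient matrix is an eigenvalue of $\widehat A$, as a set. To pin down the characteristic polynomial with multiplicities, the paper therefore implicitly needs two further facts: the stronger result that $\phi_{T^{\widehat A}}$ divides $\phi_{\widehat A}$, and the nonsingularity of $\widetilde Q_n$ (which does hold, since $\det U_i=-1$).

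Your argument avoids both. The factorization $\widehat A=SWS^{T}$, with $S^{T}S=\Delta$ and $W\Delta=T^{\widehat A}$, combined with $x^{m}\det(xI_n-XY)=x^{n}\det(xI_m-YX)$, gives the exact identity $\phi_{\widehat A}(x)=x^{n-2^r}\phi_{T^{\widehat A}}(x)$ in one step. All multiplicities are handled automatically, including any coincidences among the nonzero eigenvalues, and you never need $\widetilde Q_n$ to be invertible. The only tacit requirement, $n\ge 2^r$, is immediate because the $2^r$ classes are nonempty.

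You are also right about the factor $\frac{n}{n_0}$. Read literally, the displayed formula is not monic when $n$ is not squarefree. The correct form is
\[
\phi_{\widehat A}(x)=x^{n-2^r}\left(\tfrac{n}{n_0}\right)^{2^r}\phi_{\widetilde Q_{n_0}}\!\left(\tfrac{n_0}{n}x\right),
\]
that is, the nonzero eigenvalues are $\frac{n}{n_0}$ times those of $\widetilde Q_{n_0}$. This scaling reading is exactly how the paper uses the result later, in Theorem~\ref{thm:Acase2}.
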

\begin{proof}
Note that the quotient matrix $T^{\widehat A}$ is similar to $\widetilde Q_n$ and and the subgraph induced by each equivalence class $[d_i]$ is either empty or complete, it follows that $\mathrm{rank}(\widehat A)=\mathrm{rank}(\widetilde Q_n).$ Hence the result follows from Theorem~\ref{thm:specA}.
\end{proof}
\subsection*{Laplacian Matrix}

The Laplacian matrix of a graph $\G = (\V, \E)$ on $n$ vertices $\{v_1,\dots ,v_n\}$ is 
defined as $L(\G) = D(\G) - A(\G)$, where $D(\G) = \mathrm{diag}
(\deg(v_1), \dots, \deg(v_n))$. It is well known that $L(\G)$ is symmetric 
and positive semidefinite with smallest eigenvalue $0$.

We denote by $L$ and $\widehat{L}$ the Laplacian matrices of 
$\Gamma(\mathbb{Z}_n)$ and $\widehat{\Gamma}(\mathbb{Z}_n),$ 
respectively. In block form, indexed by the equivalence classes 
$[d_1], \dots, [d_{2^r}]$, the matrix $L=\left[[l_{ij}]\right]$ of $\Gamma(\mathbb{Z}_n)$ is given by
\[
[l_{ij}] = \begin{dcases}
-J       & \gcd(d_i, d_j) = 1, \ d_i \neq d_j; \\
nI - J   & d_i = d_j = 1; \\
\deg(d_i)I & d_i = d_j \neq 1; \\
\bf{0}        & \text{otherwise,}
\end{dcases}
\]
where $J$ and $\bf 0$ are matrices of order $|[d_i]| \times |[d_j]|$. 
We refer the reader to~\cite{Me} for the detailed study of the 
Laplacian matrix of $\Gamma(\mathbb{Z}_n)$.

\begin{rem}
Adding loops at generators affects only the block $[l_{11}]$ of $L$, 
corresponding to the class $[1]$. In $\widehat{\Gamma}(\mathbb{Z}_n)$, 
each $a \in [1]$ has degree $n$ (the original $n-1$ neighbours plus 
the loop), so
$$[\widehat{l}_{11}] = nI - J.$$
In $\Gamma(\mathbb{Z}_n)$, the same block is $(n-1)I - (J - I) = nI - J$.
Hence $[\widehat{l}_{11}] = [l_{11}]$, and all other blocks are 
unaffected. Therefore $\widehat{L} = L$, and all results established 
for $L$ carry over to $\widehat{L}$ without change.
\end{rem}

\section{The generating graph of abelian groups}\label{sec:5}
In this section we investigate the structure of the generating graph $\Gamma(G)$ for non-cyclic abelian groups $G$. 
\subsection{Graph Structure}
We begin with the $p$-group case, which will serves as the fundamental building block for the general case.

Let $\Delta(G)$ denote the subgraph of $\Gamma(G)$ induced by its non-isolated vertices, and let $\mathcal{I}(G)$ 
denote the set of isolated vertices of $\Gamma(G)$. We adopt the notation $K_{\ell\times s}$ for the complete $\ell$-partite graph $K_{\underbrace{s,s,\ldots,s}_{\ell\text{ times}}}.$ These notations we will use throughout.

\begin{theorem}\label{thm:pgraph}
    Let $G=\Z_{p^e}\times  \Z_{p^t},$ where $p$ is a prime. Then 
    \begin{enumerate}[label=(\roman*)]
        \item the set $\mathcal{I}(G)=\Phi(G);$
        \item  the graph $\Delta(G)$ is a {$r$-regular} graph, where $r=p\varphi(p^{e+t-1});$ 
        \item the graph $\Delta(G)$ is a complete $(p+1)$ partite graph, isomorphic to $K_{(p+1)\times \varphi(p^{e+t-1})}.$ Moreover, $|E(\Delta(G))|=\frac{1}{2}p(p+1)(\varphi(p^{e+t-1}))^2.$
       \end{enumerate}
\end{theorem}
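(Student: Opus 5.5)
The plan is to derive all three parts from Proposition~\ref{pro:pgen}, together with the structure of the maximal subgroups established above. For (i), we first show that every element of $\Phi(G)$ is isolated. If $x\in\Phi(G)$ and $\langle x,y\rangle=G$, then $\langle y\rangle=G$, because $\Phi(G)$ consists of non-generators. This is impossible since $G$ is non-cyclic ($d(G)=2$). Conversely, if $x\notin\Phi(G)$, Proposition~\ref{pro:pgen} gives $|N_x|=\varphi(|G|)>0$. Moreover, $x\notin N_x$, since $\langle x\rangle\neq G$, so every such neighbour is a genuine distinct vertex. Hence $\mathcal{I}(G)=\Phi(G)$.

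For (ii), the vertex set of $\Delta(G)$ is $G\setminus\Phi(G)$. Each vertex $x$ there has degree $|N_x|=\varphi(p^{e+t})=p\,\varphi(p^{e+t-1})$; the last identity holds because $e+t-1\ge 1$. All of $N_x$ lies outside $\Phi(G)$, so these neighbours are vertices of $\Delta(G)$, and the degree in $\Delta(G)$ equals the degree in $\Gamma(G)$.

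For (iii), we partition $G\setminus\Phi(G)$ into the sets $K_i\setminus\Phi(G)$ for $i=1,\dots,p+1$. The key fact is that these sets are pairwise disjoint and cover $G\setminus\Phi(G)$. Disjointness follows because distinct maximal subgroups intersect exactly in $\Phi(G)$: their images in $G/\Phi(G)\cong\Z_p\times\Z_p$ are distinct lines, which meet only in $0$. Covering follows because every element lies in some maximal subgroup; the cyclic subgroup $\langle x\rangle$ is proper. The characterization of $\mathrm{Gen}(G)$ in Proposition~\ref{pro:pgen} then says exactly this: two elements of $G\setminus\Phi(G)$ are adjacent if and only if they lie in different parts. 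The parts are independent, since two elements of the same $K_i$ generate at most $K_i$. Each part has size $|K_i|-|\Phi(G)|=p^{e+t-1}-p^{e+t-2}=\varphi(p^{e+t-1})$. Hence $\Delta(G)\cong K_{(p+1)\times\varphi(p^{e+t-1})}$.

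Finally, the edge count is obtained by summing degrees: $\frac12 (p+1)\varphi(p^{e+t-1})\cdot p\,\varphi(p^{e+t-1})$. Equivalently, it is half of $|\mathrm{Gen}(G)|$ from Remark~\ref{rem:edge}, since ordered generating pairs count each edge twice and there are no loops. The only step needing care is the disjointness and covering of the parts. The argument there is the lines-in-a-plane correspondence of Corollary~\ref{cor:imp}, and everything else is direct bookkeeping.
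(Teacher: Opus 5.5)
Your proposal is correct and takes essentially the same route as the paper, whose proof simply says that all three parts follow from Proposition~\ref{pro:pgen} and Remark~\ref{rem:edge}. You spell out the details the paper leaves implicit: that $\Phi(G)$ consists of non-generators, that the sets $K_i\setminus\Phi(G)$ partition $G\setminus\Phi(G)$ (via Corollary~\ref{cor:imp}), and that the edge count follows from the degree sum or equivalently $\tfrac12|\mathrm{Gen}(G)|$.
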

\begin{proof}
    These follow from Proposition~\ref{pro:pgen} and Remark~\ref{rem:edge}.
\end{proof}

Now let us investigate the structure for general case.
\begin{pro}\label{pro:iso1}
        Let $G = S_{p_1} \times \cdots \times S_{p_k}$. An element 
        $(a_1, a_2 \ldots, a_k) \in G$ is an isolated vertex of 
        $\Gamma(G)$ if and only if $a_i \in \mathcal{I}(S_{p_i})$ 
        for at least one $1 \leq i \leq k$. Explicitly,
        $$\I(G) = \bigcup_{i=1}^{k} S_{p_1} \times \cdots \times 
        \I(S_{p_i}) \times \cdots \times S_{p_k}.$$
\end{pro}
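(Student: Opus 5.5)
The plan is to reduce everything to Theorem~\ref{thm:agen}, which says that $\langle x,y\rangle=G$ exactly when $\langle a_i,b_i\rangle=S_{p_i}$ for every $i$. Here $G=S_{p_1}\times\cdots\times S_{p_k}$ is the case $k=j$, so each $S_{p_i}\cong\Z_{p_i^{e_i}}\times\Z_{p_i^{t_i}}$ is a non-cyclic $p_i$-group. By Theorem~\ref{thm:pgraph}(i), $\I(S_{p_i})=\Phi(S_{p_i})$. An element $x=(a_1,\dots,a_k)$ is isolated iff there is no $y\neq x$ with $\langle x,y\rangle=G$. Since $G$ is non-cyclic, no $y=x$ can generate $G$ anyway, so the condition $y\neq x$ can be dropped.

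For the ``if'' direction, suppose $a_i\in\I(S_{p_i})=\Phi(S_{p_i})$ for some $i$. Take any $y=(b_1,\dots,b_k)$. Because $\Phi(S_{p_i})$ consists of non-generators, $\langle a_i,b_i\rangle=S_{p_i}$ would force $\langle b_i\rangle=S_{p_i}$. That is impossible, since $S_{p_i}$ is non-cyclic. Hence $\langle a_i,b_i\rangle\neq S_{p_i}$, and by Theorem~\ref{thm:agen} $x$ has no neighbour, so it is isolated. (One could equally argue that $a_i\in\Phi(S_{p_i})$ lies in every maximal subgroup, and some maximal subgroup contains $b_i$.)

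For the ``only if'' direction, suppose $a_i\notin\I(S_{p_i})$ for every $i$. By Proposition~\ref{pro:pgen}, for each $i$ there is some $b_i$ with $\langle a_i,b_i\rangle=S_{p_i}$. Set $y=(b_1,\dots,b_k)$. Theorem~\ref{thm:agen} then gives $\langle x,y\rangle=G$. Moreover $y\neq x$, because otherwise $\langle a_1\rangle=S_{p_1}$ would make $S_{p_1}$ cyclic. So $x$ is adjacent to $y$ and is not isolated.

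Combining both directions, $x\in\I(G)$ iff some coordinate $a_i$ lies in $\I(S_{p_i})$, and this is exactly the stated union of the sets $S_{p_1}\times\cdots\times\I(S_{p_i})\times\cdots\times S_{p_k}$. The argument is essentially formal. The only points needing care are the distinctness $y\neq x$ and the fact that each factor is non-cyclic, which makes the componentwise criterion of Theorem~\ref{thm:agen} behave cleanly. I do not expect a real obstacle.
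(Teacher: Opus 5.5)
Your proof is correct and takes the same route as the paper, whose entire proof is that the statement ``follows directly from Theorem~\ref{thm:agen}''. You simply write out the two directions explicitly, including the check that $y\neq x$, which holds automatically because each $S_{p_i}$ is non-cyclic.
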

\begin{proof}
    The statement follows directly from Theorem~\ref{thm:agen}.
\end{proof}

\begin{theorem}\label{thm:case1}
    Let $G= S_{p_1}\times  \ldots \times S_{p_k},$ where each $S_{p_i}$'s are non-cyclic and $|S_{p_i}|=p_i^{u_i}$ Then  $$\Ga(G)=\otimes_{i=1}^{k}\Ga(S_{p_i}).$$ Moreover,
    $\Delta(G)$ is a $\alpha$-partite graph, where $\alpha=\prod\limits_{i=1}^{k}(p_i+1)$ and each part has size $\prod\limits_{i=1}^{k}\varphi(p_i^{u_i-1}).$
\end{theorem}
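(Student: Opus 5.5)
The plan has two parts: first identify $\Gamma(G)$ with the Kronecker product of the graphs $\Gamma(S_{p_i})$, then read off the multipartite structure of $\Delta(G)$ from Theorem~\ref{thm:pgraph}.

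\textbf{Step 1: the product identification.} Identify the vertex set $G$ with $S_{p_1}\times\cdots\times S_{p_k}$, which is exactly the vertex set of $\otimes_{i=1}^k\Gamma(S_{p_i})$. By Theorem~\ref{thm:agen}, for $x=(a_1,\dots,a_k)$ and $y=(b_1,\dots,b_k)$ we have $\langle x,y\rangle=G$ if and only if $\langle a_i,b_i\rangle=S_{p_i}$ for every $i$.

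We must still check that these conditions are edges of the simple graphs $\Gamma(S_{p_i})$, i.e.\ that $a_i\neq b_i$. Since $S_{p_i}$ is non-cyclic, $\langle a_i,a_i\rangle=\langle a_i\rangle\neq S_{p_i}$. Hence the condition $\langle a_i,b_i\rangle=S_{p_i}$ already forces $a_i\neq b_i$, so each coordinate pair is a genuine edge $a_ib_i\in E(\Gamma(S_{p_i}))$.

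Conversely, if each $a_ib_i$ is an edge, then $x\neq y$, and Theorem~\ref{thm:agen} gives $\langle x,y\rangle=G$. So the edge sets agree, and $\Gamma(G)=\otimes_{i=1}^k\Gamma(S_{p_i})$ with no loop issues. This is the one point needing care: non-cyclicity is exactly what rules out diagonal pairs.

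\textbf{Step 2: the structure of $\Delta(G)$.} By Proposition~\ref{pro:iso1}, the non-isolated vertices of $\Gamma(G)$ are the tuples with every $a_i\notin\I(S_{p_i})=\Phi(S_{p_i})$. Hence $\Delta(G)=\otimes_i\Delta(S_{p_i})$.

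By Theorem~\ref{thm:pgraph}, each $\Delta(S_{p_i})$ is complete $(p_i+1)$-partite. Its parts are $K^{(i)}_1\setminus\Phi,\dots,K^{(i)}_{p_i+1}\setminus\Phi$, each of size $\varphi(p_i^{u_i-1})$, where $u_i$ plays the role of $e+t$. Two vertices of $\Delta(S_{p_i})$ are adjacent if and only if they lie in different parts.

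Now partition the vertex set of $\Delta(G)$ into the product cells $\prod_i (K^{(i)}_{j_i}\setminus\Phi(S_{p_i}))$, one for each index tuple $(j_1,\dots,j_k)$.
\begin{itemize}
\item There are $\alpha=\prod_i(p_i+1)$ such cells.
\item Each cell has size $\prod_i\varphi(p_i^{u_i-1})$.
\item Two vertices in the same cell are in the same part in every coordinate, so they are non-adjacent. Each cell is therefore an independent set, and $\Delta(G)$ is $\alpha$-partite with parts of the stated size.
\end{itemize}

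\textbf{Caveat.} Adjacency requires differing parts in \emph{all} coordinates. Consequently the $\alpha$-partite graph is not complete multipartite once $k\ge2$, and I would remark on this rather than claim completeness. The statement only asserts $\alpha$-partiteness, which the argument above gives.
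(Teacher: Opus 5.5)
Your proposal is correct and follows the same route as the paper, which simply cites Theorem~\ref{thm:agen}, Theorem~\ref{thm:pgraph} and the definition of the Kronecker product. You make explicit two points the paper leaves implicit: non-cyclicity of each $S_{p_i}$ rules out coordinates with $a_i=b_i$, and the parts are the product cells $\prod_i\bigl(K^{(i)}_{j_i}\setminus\Phi(S_{p_i})\bigr)$. Your caveat is also accurate: for $k\ge 2$ these cells are independent sets but $\Delta(G)$ is not complete $\alpha$-partite (in the paper's $(\Z_2\times\Z_2)\times(\Z_3\times\Z_3)$ example it is $12$-regular rather than $22$-regular), so only the weaker $\alpha$-partite claim holds, and that is exactly what the statement asserts.
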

\begin{proof}
    From Theorem~\ref{thm:agen}, Theorem~\ref{thm:pgraph} and by the definition of the Kronecker product of graphs, the statement follows.
\end{proof}
\begin{cor}\label{cor:case1}
    Let $G = S_{p_1} \times \cdots \times S_{p_k}$. Then $\Delta(G)$ is 
    $d$-regular, where
    \[
    d = \prod_{i=1}^{k} r_i,
    \]
    and $r_i$ denotes the degree of $\Delta(S_{p_i})$ for each $1 \leq i \leq k$.
\end{cor}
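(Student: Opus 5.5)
The plan is to reduce everything to the Kronecker product structure already established in Theorem~\ref{thm:case1}, namely $\Gamma(G)=\otimes_{i=1}^{k}\Gamma(S_{p_i})$, and then to read off degrees with Proposition~\ref{pro:per}(2).

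First I will show that $\Delta(G)$ is exactly the product of the non-isolated parts:
\[
\Delta(G)=\otimes_{i=1}^{k}\Delta(S_{p_i}).
\]
By Proposition~\ref{pro:iso1}, a vertex $(a_1,\dots,a_k)$ is non-isolated exactly when every $a_i\notin\I(S_{p_i})$. That is, the vertex set of $\Delta(G)$ is $\prod_i V(\Delta(S_{p_i}))$. By Theorem~\ref{thm:agen}, two such vertices are adjacent in $\Gamma(G)$ if and only if each pair of coordinates generates $S_{p_i}$, which is adjacency in $\Delta(S_{p_i})$.

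One subtlety needs a remark here. In the Kronecker product, two distinct tuples $x\neq y$ may agree in some coordinate $a_i=b_i$. Such a pair is never adjacent, because $\langle a_i\rangle\neq S_{p_i}$ when $S_{p_i}$ is non-cyclic, and likewise there are no loops in $\Gamma(S_{p_i})$. So the product convention matches the graph exactly. Consequently the induced subgraph on these vertices is precisely the Kronecker product of the $\Delta(S_{p_i})$.

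Next I apply Proposition~\ref{pro:per}(2), extended by induction on $k$ from two factors to $k$ factors. This gives $\deg_{\Delta(G)}(a_1,\dots,a_k)=\prod_i \deg_{\Delta(S_{p_i})}(a_i)$. By Theorem~\ref{thm:pgraph}(ii), each $\Delta(S_{p_i})$ is $r_i$-regular with $r_i=p_i\varphi(p_i^{u_i-1})$. Hence every vertex of $\Delta(G)$ has degree $d=\prod_i r_i$, independent of the vertex.

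The only point requiring care is the identification of $\Delta(G)$ with the product of the $\Delta(S_{p_i})$. One must also confirm that every vertex of the product actually has positive degree, so that no further isolated vertices appear. This is immediate, since each $r_i\geq 1$ and so $d>0$. The remaining steps are routine.
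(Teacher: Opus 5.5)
Your proposal is correct and takes the same route as the paper, which simply cites Theorem~\ref{thm:case1} together with the degree formula of Proposition~\ref{pro:per}(2). You also fill in details the paper leaves implicit: the identification $\Delta(G)=\otimes_{i=1}^{k}\Delta(S_{p_i})$ (which the paper records in Remark~\ref{rem:decompose}), and the check that coordinates with $a_i=b_i$ create no spurious adjacencies because each $S_{p_i}$ is non-cyclic.
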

\begin{proof}
The result follows from Theorem~\ref{thm:case1} and the properties 
of the Kronecker product of graphs (see Proposition~\ref{pro:per}).
\end{proof}

\begin{pro}\label{pro:iso2}
   For $k\neq j.$ Let $G=S_{p_1}\times  \ldots \times  S_{p_j}\times  \Z_s,$ where $s=\prod \limits_{i=j+1}^{k}p_i^{e_i}.$  A non-zero element $(a_1,a_2,\ldots,a_j,a_s)\in G$ is an isolated vertex in $\Gamma(G)$ if and only if  $a_i$ is an isolated vertex of $\Gamma(S_{p_i})$ for some $i\in \{1,2,\ldots ,j\}.$ Explicitly, 
   $$\I(G) = \bigcup_{i=1}^{k} S_{p_1} \times \cdots \times 
   \I(S_{p_i}) \times \cdots \times S_{p_j}\times \Z_s.$$
\end{pro}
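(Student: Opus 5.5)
The plan is to reduce everything to Theorem~\ref{thm:bgen}, together with the description of isolated vertices of the $p$-group factors from Theorem~\ref{thm:pgraph}(i): for each $i\le j$ we have $\mathcal{I}(S_{p_i})=\Phi(S_{p_i})$. Write $x=(a_1,\ldots,a_j,a_s)$. The union in the explicit formula should be read over $1\le i\le j$, since only the non-cyclic Sylow factors contribute. Note also that the zero element lies in every $\Phi(S_{p_i})$, so it is consistent with the displayed formula as well.

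\emph{Step 1 (sufficiency of the condition for isolation).} Suppose $a_i\in\I(S_{p_i})=\Phi(S_{p_i})$ for some $i\le j$. Then for every $b_i\in S_{p_i}$ we have $\langle a_i,b_i\rangle=\langle b_i\rangle\neq S_{p_i}$, because $S_{p_i}$ is non-cyclic and elements of the Frattini subgroup are non-generators. Hence, by Theorem~\ref{thm:bgen}, no $y\in G$ satisfies $\langle x,y\rangle=G$, and $x$ is isolated.

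\emph{Step 2 (necessity).} Suppose $a_i\notin\Phi(S_{p_i})$ for all $1\le i\le j$. I will construct a neighbour $y=(b_1,\ldots,b_j,b_s)$ explicitly. For each $i$, the element $a_i$ lies in some maximal subgroup $K$ of $S_{p_i}$. By Proposition~\ref{pro:pgen} there are $p_i+1\ge 3$ maximal subgroups, so I can choose $b_i\in K'\setminus\Phi(S_{p_i})$ for some maximal $K'\neq K$. Then Proposition~\ref{pro:pgen} gives $\langle a_i,b_i\rangle=S_{p_i}$. For the cyclic component I take $b_s=1$, a generator of $\Z_s$, so that $\langle a_s,b_s\rangle=\Z_s$ regardless of $a_s$. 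Theorem~\ref{thm:bgen} then yields $\langle x,y\rangle=G$.

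\emph{Step 3 (distinctness, the only delicate point).} Adjacency in $\Gamma(G)$ requires $y\neq x$, so I must check this. If $y=x$, then in particular $b_1=a_1$, and this element would lie in $K\cap K'=\Phi(S_{p_1})$, contradicting the choice of $b_1$. (Alternatively, $\langle x\rangle$ is cyclic and cannot equal $G$, since $G$ is non-cyclic because $j\ge1$.) Hence $x$ has a neighbour and is not isolated.

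Combining Steps 1–3 gives the equivalence and the explicit union formula. The main care needed is in Step 3 and in the choice of $b_s$: a naive choice of $b_s$ could fail to generate $\Z_s$, whereas choosing $b_s=1$ removes any dependence on $a_s$.
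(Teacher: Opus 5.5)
Your argument is correct and is the paper's argument made explicit: both reduce to Theorem~\ref{thm:bgen} and note that the $\Z_s$-coordinate never obstructs generation. The paper phrases this as $\widehat\Gamma(\Z_s)$ having no isolated vertices, which is exactly your choice $b_s=1$; you are also right that the union should run over $1\le i\le j$ and that $y\neq x$ must be checked. One small slip in Step~1: the equality $\langle a_i,b_i\rangle=\langle b_i\rangle$ is false in general (take $b_i=0$ and $a_i\neq 0$), but the non-generator property you cite already gives what you need, since $\langle a_i,b_i\rangle=S_{p_i}$ would force $\langle b_i\rangle=S_{p_i}$, which is impossible because $S_{p_i}$ is non-cyclic.
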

\begin{proof}
    This follows from Theorem~\ref{thm:bgen}. Also, the graph $\widehat\Ga(\Z_s)$ doesn't have any isolated vertex, thus there is no isolated vertex in $\Ga(G)$ of the form $(a_1,a_2,\dots, a_j,a_s)$ such that for each  $1\leq i\leq j,$ $ a_i\in S_{p_i}\setminus \I(S_{p_i}),$ and $a_s\in\Z_s.$
\end{proof}

\begin{theorem}\label{thm:case2}
    Let $G=S_{p_1}\times  \ldots \times  S_{p_j}\times  \Z_s,$ where $s=\prod \limits_{i=j+1}^{k}p_i^{e_i}.$ Then $$\Ga(G)=\left(\otimes_{i=1}^{j}\Ga(S_{p_i})\right)\otimes\,\widehat\Ga(\Z_s).$$ 
\end{theorem}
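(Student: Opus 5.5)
The argument identifies both graphs on the common vertex set $G = S_{p_1}\times\cdots\times S_{p_j}\times\Z_s$ and compares their edge sets using Theorem~\ref{thm:bgen}. A vertex of $\left(\otimes_{i=1}^{j}\Ga(S_{p_i})\right)\otimes\widehat\Ga(\Z_s)$ is a tuple $(a_1,\dots,a_j,a_s)$, so the vertex sets agree and it remains to show that adjacency coincides. I will use the Kronecker product convention from the Remark in Section~\ref{sec:2}, which allows a loop in the last factor. Two tuples $x=(a_1,\dots,a_j,a_s)$ and $y=(b_1,\dots,b_j,b_s)$ are adjacent in the product exactly when $a_ib_i\in E(\Ga(S_{p_i}))$ for every $1\le i\le j$ and $a_sb_s\in E(\widehat\Ga(\Z_s))$, where the case $a_s=b_s$ is allowed precisely when $a_s$ generates $\Z_s$.

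First I would take $x\neq y$ adjacent in $\Ga(G)$, i.e.\ $\langle x,y\rangle=G$. By Theorem~\ref{thm:bgen}, $\langle a_i,b_i\rangle=S_{p_i}$ for each $i\le j$ and $\langle a_s,b_s\rangle=\Z_s$. Since $S_{p_i}$ is non-cyclic, a single element cannot generate it, so $a_i\neq b_i$ and $a_ib_i$ is a genuine edge of $\Ga(S_{p_i})$. For the cyclic factor there are two cases. If $a_s\neq b_s$, then $a_sb_s$ is an ordinary edge of $\Ga(\Z_s)\subseteq\widehat\Ga(\Z_s)$. If $a_s=b_s$, then $\langle a_s\rangle=\Z_s$, and this is exactly the loop at $a_s$ in $\widehat\Ga(\Z_s)$. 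In either case $xy$ is an edge of the product.

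For the converse I would take an edge $xy$ of the product. Each factor condition gives $\langle a_i,b_i\rangle=S_{p_i}$ for $i\le j$, and either an edge or a loop in $\widehat\Ga(\Z_s)$ gives $\langle a_s,b_s\rangle=\Z_s$. Theorem~\ref{thm:bgen} then yields $\langle x,y\rangle=G$. Moreover $x\neq y$, because $a_1\neq b_1$ (and $j\ge 1$ since $G$ is non-cyclic), so no loop appears in the product and $xy$ is an edge of the simple graph $\Ga(G)$. This is consistent with the Remark's observation that the product is simple when the first factor is simple.

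The only delicate point is the loop bookkeeping. It explains why the looped graph $\widehat\Ga(\Z_s)$, rather than $\Ga(\Z_s)$, is the correct factor: pairs with equal cyclic components $a_s=b_s$ generating $\Z_s$ would otherwise be lost. It also explains why the non-cyclicity of the Sylow factors rules out spurious loops in the product. Everything else follows directly from Theorem~\ref{thm:bgen} and the definition of the Kronecker product.
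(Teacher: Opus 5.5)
Your proof is correct and takes the same approach as the paper, whose proof simply cites Theorem~\ref{thm:bgen} together with the definition of the Kronecker product under the loop convention. You have written out the edge-by-edge verification that the paper leaves implicit: the loop case $a_s=b_s$ with $\langle a_s\rangle=\Z_s$, and the observation that non-cyclicity of each $S_{p_i}$ forces $a_i\neq b_i$, which keeps the product free of loops.
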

\begin{proof}
    This follows from the definition of the Kronecker product of graphs and Theorem~\ref{thm:bgen}.
\end{proof}

\begin{cor}\label{cor:degz}
    Let $G=S_{p_1}\times  \ldots \times  S_{p_j}\times  \Z_s.$ Then $\Delta(G)$ is not regular. Let $r_i's$ be the degrees of the respective regular graphs $\Delta(S_{p_i}),$ where $1\leq i \leq j. $ Then for each $a=(a_1,\dots a_j,a_s)\in V\setminus\I(G),$ the vertex degree is given by
    $$\deg(a)=r_1r_2\ldots r_j\deg(a_s),$$ where $\deg(a_s)$ is the vertex degree of $a_s$  in $\widehat \Ga(\Z_s).$
\end{cor}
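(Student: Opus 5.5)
The plan is to read off both claims from the Kronecker product decomposition in Theorem~\ref{thm:case2}, $\Gamma(G)=\left(\otimes_{i=1}^{j}\Gamma(S_{p_i})\right)\otimes\widehat\Gamma(\Z_s)$, combined with the degree formula in Proposition~\ref{pro:per}(2). By the Remark on looped factors, this formula remains valid when one factor is $\widehat\Gamma(\Z_s)$ with loops contributing $1$ to the degree. Applying it repeatedly, for any vertex $a=(a_1,\dots,a_j,a_s)$ we get
\[
\deg_{\Gamma(G)}(a)=\Bigl(\prod_{i=1}^{j}\deg_{\Gamma(S_{p_i})}(a_i)\Bigr)\deg_{\widehat\Gamma(\Z_s)}(a_s).
\]

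First we establish the degree formula. Take $a\notin\I(G)$. By Proposition~\ref{pro:iso2}, each $a_i$ is then a non-isolated vertex of $\Gamma(S_{p_i})$, so $a_i\in\Delta(S_{p_i})$. By Theorem~\ref{thm:pgraph}(ii), $\deg(a_i)=r_i=p_i\varphi(p_i^{e_i+t_i-1})$. Isolated vertices of $\Gamma(G)$ are deleted in forming $\Delta(G)$, but they have no neighbours, so deleting them does not change the degree of any remaining vertex. Hence $\deg_{\Delta(G)}(a)=r_1\cdots r_j\deg(a_s)$, as claimed.

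Next we prove non-regularity. Since $r_1\cdots r_j$ is a fixed positive constant, it suffices to find two non-isolated vertices of $G$ whose last coordinates have different degrees in $\widehat\Gamma(\Z_s)$. Fix any $a_i\in S_{p_i}\setminus\Phi(S_{p_i})$ for $1\le i\le j$; by Proposition~\ref{pro:iso2}, every choice of $a_s\in\Z_s$ then gives a non-isolated vertex. By Theorem~\ref{thm:deg} and its looped extension, $\deg(d)=\frac{s}{d}\varphi(d)$ for $d\mid s_0$. Here $s>1$ because $j<k$, so $s$ has a prime divisor $q$. Taking $a_s=1$ gives degree $s$, and taking $a_s=q$ gives degree $\frac{s}{q}(q-1)<s$. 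These two vertices therefore have different degrees in $\Delta(G)$.

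The only delicate point is the use of the looped factor. The degree formula for Kronecker products with one looped factor, together with the convention that a loop counts once, must yield exactly the neighbourhood from Theorem~\ref{thm:bgen}. In particular, a vertex $(x,a_s)$ with $a_s$ a generator of $\Z_s$ can be adjacent to $(y,a_s)$ with $x\neq y$; this is exactly the edge the loop produces. We will verify this correspondence briefly, and the rest is routine.
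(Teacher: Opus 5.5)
Your proposal is correct and follows essentially the same route as the paper. Both use the Kronecker decomposition of Theorem~\ref{thm:case2}, the product degree formula of Proposition~\ref{pro:per} (which still holds with the looped factor $\widehat\Gamma(\Z_s)$), and the fact that non-regularity is inherited from $\widehat\Gamma(\Z_s)$. The paper compares the last coordinates $0$ and $1$ (degrees $\varphi(s)$ and $s$) where you compare $q$ and $1$, and you spell out two points the paper leaves implicit: the test vertices are non-isolated, and deleting isolated vertices changes no degree.
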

\begin{proof}
    Note that the regularity of $\Delta(G)$ depends on the vertex degree of the last component $a_s\in \Z_s$ in $\widehat\Ga(\Z_s)$ of each $a\in G.$ However, the graph $\widehat \Ga(\Z_s)$ is not regular since $\deg 0\neq \deg 1.$ Then $\Delta(G)$ is not regular. Then the result follows from the properties of Kronecker product of graphs (see Proposition~\ref{pro:per}) and Theorem~\ref{thm:pgraph}.
\end{proof}
\begin{rem}\label{rem:decompose}
        Let $G = S_{p_1}\times\cdots\times S_{p_k},$ where each $S_{p_i}$ is 
        a non-cyclic abelian $p_i$-group. Since $(a_1,\ldots,a_k)$ is 
        non-isolated in $\Gamma(G)$ if and only if $a_i$ is non-isolated in 
        $\Gamma(S_{p_i})$ for each $i$, we have
        $$\Gamma(G) = \otimes_{i=1}^{k}\Gamma(S_{p_i}), \qquad 
        \Delta(G) = \otimes_{i=1}^{k}\Delta(S_{p_i}).$$
        More generally, for $G = S_{p_1}\times\cdots\times S_{p_j}\times 
        \mathbb{Z}_s$ with $s = \prod\limits_{i=j+1}^{k}p_i^{u_i}$, since 
        $\widehat{\Gamma}(\mathbb{Z}_s)$ has no isolated vertices,
        $$\Gamma(G) = \left(\otimes_{i=1}^{j}\Gamma(S_{p_i})\right)
        \otimes\,\widehat{\Gamma}(\mathbb{Z}_s), \qquad \Delta(G) = 
        \left(\otimes_{i=1}^{j}\Delta(S_{p_i})\right)\otimes\,
        \widehat{\Gamma}(\mathbb{Z}_s).$$
\end{rem}

Next, we illustrate Theorem~\ref{thm:case1} and Theorem~\ref{thm:case2} with two examples: one where all 
Sylow subgroups are non-cyclic, and one where some are cyclic.
\begin{ex}[Case1]
    Let $G = (\mathbb{Z}_2 \times \mathbb{Z}_2) \times (\mathbb{Z}_3 \times 
    \mathbb{Z}_3)$. The Sylow $2$-subgroup is $S_2 = \mathbb{Z}_2 \times 
    \mathbb{Z}_2$ and the Sylow $3$-subgroup is $S_3 = \mathbb{Z}_3 \times 
    \mathbb{Z}_3$. The generating graphs $\Gamma(S_2)$, $\Gamma(S_3)$, and 
    their Kronecker product $\Gamma(G) = \Gamma(S_2) \otimes \Gamma(S_3)$ 
    are illustrated below:
\end{ex}

\begin{figure}[H]
    \centering
\begin{minipage}[t]{0.45\textwidth}
    \centering
\begin{tikzpicture}[every node/.style={circle, draw, fill=blue, inner sep=2.1pt}, scale=1.5]

    % Nodes for K3
    \node (A) at (0,0.5) [label=below left:{(0,1)}] {};
    \node (B) at (1,0.5) [label=below right:{(1,0)}] {};
    \node (C) at (0.5,1) [label=above:{(1,1)}] {};
    \node (D) at (-0.6,1) [label=above:{(0,0)}] {};
    % Edges for K3
    \draw (A) -- (B);
    \draw (B) -- (C);
    \draw (C) -- (A);
     \end{tikzpicture}
     \captionof{figure}{Graph $\Gamma(S_2)$}
\end{minipage}
\hfill
\begin{minipage}[t]{0.45\textwidth}
        \centering
    \begin{tikzpicture}[every node/.style={circle, fill=blue, inner sep=2.1pt}, scale=1]
      % Nodes
      \node (a1) at (-0.5, 2) [label=above:{(0,1)}] {};
      \node (a2) at (0.5, 2)  [label=above:{(0,2)}] {};
    
      \node (b1) at (-0.5, -2) [label=left:{(1,0)}] {};
      \node (b2) at (0.5, -2)  [label=right:{(2,0)}] {};
    
      \node (c1) at (-2, 0.5) [label=left:{(1,1)}] {};
      \node (c2) at (-2, -0.5) [label=left:{(2,2)}] {};
    
      \node (d1) at (2, 0.5) [label=right:{(1,2)}] {};
      \node (d2) at (2, -0.5) [label=right:{(2,1)}] {};
      \node (e) at (-2,2) [label=above:{(0,0)}]{};
    
      % Draw edges between vertices from different parts
      \foreach \u in {a1,a2}
        \foreach \v in {b1,b2,c1,c2,d1,d2}
          \draw (\u) -- (\v);
    
      \foreach \u in {b1,b2}
        \foreach \v in {c1,c2,d1,d2}
          \draw (\u) -- (\v);
    
      \foreach \u in {c1,c2}
        \foreach \v in {d1,d2}
          \draw (\u) -- (\v);
     \end{tikzpicture}
    \captionof{figure}{Graph $\Gamma(S_3)$}
\end{minipage}
\end{figure}
\begin{figure}[H]
    \centering
    \begin{tikzpicture}[
        every node/.style={circle, fill=blue, inner sep=2pt, font=\small},
        x=1.2cm, y=1.2cm
    ]
% === Layer 0 ===
\node (a10) at (0,8) [label=right:{(0,1,0,1)}] {};
\node (a20) at (0.5,7) [label=right:{(0,1,0,2)}] {};
\node (b10) at (1,6) [label=right:{(0,1,1,0)}] {};
\node (b20) at (1.5,5) [label=right:{(0,1,2,0)}] {};
\node (c10) at (2,4) [label=right:{(0,1,1,2)}] {};
\node (c20) at (2.5,3) [label=right:{(0,1,2,1)}] {};
\node (d10) at (3,2) [label=right:{(0,1,1,1)}] {};
\node (d20) at (3.5,1) [label=right:{(0,1,2,2)}] {};

% % === Layer 1 ===
\node (a11) at (-1.5,8) [label=left:{(1,0,0,1)}] {};
\node (a21) at (-2,7) [label=left:{(1,0,0,2)}] {};
\node (b11) at (-2.5,6) [label=left:{(1,0,1,0)}] {};
\node (b21) at (-3,5) [label=left:{(1,0,2,0)}] {};
\node (c11) at (-3.5,4) [label=left:{(1,0,1,2)}] {};
\node (c21) at (-4,3) [label=left:{(1,0,2,1)}] {};
\node (d11) at (-4.5,2) [label=left:{(1,0,1,1)}] {};
\node (d21) at (-5,1) [label=left:{(1,0,2,2)}] {};

% % === Layer 2 ===
\node (a12) at (-4.5,0) [label=left:{(1,1,0,1)}] {};
\node (a22) at (-3.8,0) [label=below:{(1,1,0,2)}] {};
\node (b12) at (-2.5,0) [label=below:{(1,1,1,0)}] {};
\node (b22) at (-1.3,0) [label=below:{(1,1,2,0)}] {};
\node (c12) at (0,0) [label=below:{(1,1,1,2)}] {};
\node (c22) at (1.2,0) [label=below:{(1,1,2,1)}] {};
\node (d12) at (2.4,0) [label=below:{(1,1,1,1)}] {};
\node (d22) at (3.2,0) [label=right:{(1,1,2,2)}] {};
%Isolated vertices
\node (a) at (-6.5,-1.4) [label=below:{(0,0,0,1)}] {};
\node (b) at (-5.3,-1.4) [label=below:{(0,0,0,2)}] {};
\node (c) at (-4.1,-1.4) [label=below:{(0,0,1,0)}] {};
\node (d) at (-2.8,-1.4) [label=below:{(0,0,2,0)}] {};
\node (e) at (-1.5,-1.4) [label=below:{(0,0,1,2)}] {};
\node (f) at (-0.2,-1.4) [label=below:{(0,0,2,1)}] {};
\node (g) at (1.0,-1.4) [label=below:{(0,0,1,1)}] {};
\node (h) at (2.2,-1.4) [label=below:{(0,0,2,2)}] {};
\node (i) at (3.4,-1.4) [label=below:{(0,1,0,0)}] {};
\node (j) at (4.6,-1.4) [label=below:{(1,0,0,0)}] {};
\node (k) at (5.8,-1.4) [label=below:{(1,1,0,0)}] {};
\node (l) at (7.0,-1.4) [label=below:{(0,0,0,0)}] {};

%%Edges***************************
\foreach \u in {a10,a20}
\foreach \v in {b11,b21,c11,c21,d11,d21,b12,b22,c12,c22,d12,d22}
  \draw (\u) -- (\v);

\foreach \u in {b10,b20}
\foreach \v in {a11,a21,c11,c21,d11,d21,a12,a22,c12,c22,d12,d22}
  \draw (\u) -- (\v);

\foreach \u in {c10,c20}
\foreach \v in {a11,a21,b11,b21,d11,d21,a12,a22,b12,b22,d12,d22}
  \draw (\u) -- (\v);
  \foreach \u in {d10,d20}
  \foreach \v in {a11,a21,b11,b21,c11,c21,a12,a22,b12,b22,c12,c22}
    \draw (\u) -- (\v);

\foreach \u in {a11,a21}
\foreach \v in {b12,b22,c12,c22,d12,d22}
\draw (\u) -- (\v);

\foreach \u in {b11,b21}
\foreach \v in {a12,a22,c12,c22,d12,d22}
\draw (\u) -- (\v);  

\foreach \u in {c11,c21}
\foreach \v in {a12,a22,b12,b22,d12,d22}
\draw (\u) -- (\v);  

\foreach \u in {d11,d21}
\foreach \v in {a12,a22,b12,b22,c12,c22}
\draw (\u) -- (\v);  
\end{tikzpicture}
\captionof{figure}{$\Gamma(G) = \Gamma(S_2) \otimes \Gamma(S_3)$}
\end{figure}
Note that $\Gamma(G)$ is a 12-regular graph and the isolated vertices are exactly the elements of $(\Phi(S_2) \times S_3 )\cup (S_2 \times \Phi(S_3))$.

\begin{ex}[Case2]
    Now we consider $G = (\mathbb{Z}_4 \times \mathbb{Z}_2) \times \mathbb{Z}_3$, 
    an example where the Sylow subgroups are not all of the same type: the 
    Sylow $2$-subgroup $S_2 = \mathbb{Z}_4 \times \mathbb{Z}_2$ is non-cyclic, 
    while the Sylow $3$-subgroup $S_3 = \mathbb{Z}_3$ is cyclic.
    The generating 
    graphs $\Gamma(S_2)$, $\Gamma(S_3)$, and their Kronecker product 
    $\Gamma(G) = \Gamma(S_2) \otimes \widehat\Gamma(S_3)$ are illustrated below:
\end{ex}
\begin{figure}[ht!]
   \begin{minipage}[t]{0.45\textwidth}
        \centering
    \begin{tikzpicture}[every node/.style={circle, fill=blue, inner sep=1.5pt}]
      % Nodes
      \node (a1) at (-0.5, 2) [label=above:{(0,1)}] {};
      \node (a2) at (0.5, 2)  [label=above:{(2,1)}] {};
    
      \node (d1) at (-2, 1.5) [label=left:{(0,0)}] {};
      \node (d2) at (2, 1.5)  [label=right:{(2,0)}] {};
    
      \node (c1) at (-2, 0.5) [label=left:{(1,0)}] {};
      \node (c2) at (-2, -0.5) [label=left:{(3,0)}] {};
    
      \node (b1) at (2, 0.5) [label=right:{(1,1)}] {};
      \node (b2) at (2, -0.5) [label=right:{(3,1)}] {};

      % Draw edges between vertices from different parts
      \foreach \u in {a1,a2}
        \foreach \v in {c1,c2,b1,b2}
          \draw (\u) -- (\v);
    
      \foreach \u in {c1,c2}
        \foreach \v in {b1,b2}
          \draw (\u) -- (\v);
    
    \end{tikzpicture}
\captionof{figure}{Graph $\Ga(S_2)$}
\end{minipage}
\hfill
    \begin{minipage}[t]{0.45\textwidth}
    \centering
\begin{tikzpicture}[every node/.style={circle, draw, fill=blue, inner sep=2.1pt}, scale=2.8]

    % Nodes for K3
    \node (A) at (0,0.5) [label=below left:{1}] {};
    \node (B) at (1,0.5) [label=below right:{2}] {};
    \node (C) at (0.5,1) [label=above:{0}] {};
    % Edges for K3
    \draw (A) -- (B);
    \draw (B) -- (C);
    \draw (C) -- (A);
    \draw (A) to[loop left] ();
    \draw (B) to[loop right] ();
     \end{tikzpicture}
     \captionof{figure}{Graph $\widehat \Ga(S_3)$}
\end{minipage}
\end{figure}

\begin{figure}[h!]
    \centering
    \begin{tikzpicture}[
        every node/.style={circle, fill=blue, inner sep=1.5pt, font=\small},
        x=1cm, y=1cm
    ]
% === Layer 0 ===
\node (a10) at (0.5,8) [label=above:{(0,1,1)}] {};
\node (a20) at (1.5,7.5) [label=right:{(2,1,1)}] {};
\node (b10) at (2.8,6.5) [label=right:{(1,0,1)}] {};
\node (b20) at (3.7,5) [label=right:{(3,0,1)}] {};
\node (c10) at (4,4) [label=right:{(1,1,1)}] {};
\node (c20) at (3.8,2.8) [label=right:{(3,1,1)}] {};

% % === Layer 1 ===
\node (a11) at (-1,8) [label=above:{(0,1,2)}] {};
\node (a21) at (-2.5,7.5) [label=left:{(2,1,2)}] {};
\node (c11) at (-3.8,6.5) [label=left:{(1,0,2)}] {};
\node (c21) at (-4.7,5) [label=left:{(3,0,2)}] {};
\node (b11) at (-4.8,4) [label=left:{(1,1,2)}] {};
\node (b21) at (-4.5,2.8) [label=left:{(3,1,2)}] {};

% % === Layer 2 ===
\node (a12) at (-3.8,1.5) [label=left:{(0,1,0)}] {};
\node (a22) at (-2.5,0.5) [label=left:{(2,1,0)}] {};
\node (b12) at (-1.3,0) [label=below:{(1,0,0)}] {};
\node (b22) at (0.3,0) [label=below:{(3,0,0)}] {};
\node (c12) at (1.5,0.5) [label=right:{(1,1,0)}] {};
\node (c22) at (2.8,1.5) [label=right:{(3,1,0)}] {};
%Isolated vertices
\node (a) at (5.5,6) [label=right:{(0,0,0)}] {};
\node (b) at (5.5,5) [label=right:{(0,0,1)}] {};
\node (c) at (5.5,4) [label=right:{(0,0,2)}] {};
\node (d) at (5.5,3) [label=right:{(2,0,0)}] {};
\node (e) at (5.5,2) [label=right:{(2,0,1)}] {};
\node (f) at (5.5,1) [label=right:{(2,0,2)}] {};
%%Edges***************************
\foreach \u in {a10,a20}
\foreach \v in {b11,b21,c11,c21,b12,b22,c12,c22}
  \draw (\u) -- (\v);

\foreach \u in {b10,b20}
\foreach \v in {a11,a21,c11,c21,a12,a22,c12,c22}
  \draw (\u) -- (\v);

\foreach \u in {c10,c20}
\foreach \v in {a11,a21,b11,b21,a12,a22,b12,b22}
  \draw (\u) -- (\v);

\foreach \u in {a11,a21}
\foreach \v in {b12,b22,c12,c22}
\draw (\u) -- (\v);

\foreach \u in {b11,b21}
\foreach \v in {a12,a22,c12,c22}
\draw (\u) -- (\v);  

\foreach \u in {c11,c21}
\foreach \v in {a12,a22,b12,b22}
\draw (\u) -- (\v);   
%%because of loops..

\foreach \u in {a10,a20}
\foreach \v in {b10,b20,c10,c20}
\draw (\u) -- (\v); 
\foreach \u in {b10,b20}
\foreach \v in {c10,c20}
\draw (\u) -- (\v); 
\foreach \u in {a11,a21}
\foreach \v in {b11,b21,c11,c21}
\draw (\u) -- (\v); 
\foreach \u in {b11,b21}
\foreach \v in {c11,c21}
\draw (\u) -- (\v); 
\end{tikzpicture}
\captionof{figure}{$\Gamma(G) = \Gamma(S_2) \otimes \widehat\Gamma(S_3)$}
\end{figure}

\newpage
Note that $\Gamma(G)$ is non-regular: vertices in $S_2 \setminus \Phi(S_2)$ 
paired with the non-identity elements of $S_3$ have degree $12$, those paired 
with the identity have degree $8$, and the isolated vertices (arising solely 
from $\Phi(S_2) \times S_3$) have degree $0$. 

We record the following consequences of the preceding analysis on the behaviour of generating pairs of the graph.
\subsection{Structural Characterization}
We now derive some structural characterizations of the graph 
$\Gamma(G)$ for finite abelian groups $G$ with $d(G) = 2$. Since any 
such group decomposes as a direct product of its Sylow subgroups with 
pairwise coprime orders, and we saw that the structure of $\Gamma(G)$ is also completely 
determined by the generating graphs of these Sylow subgroups via the 
Kronecker product. Particularly in this section we establish necessary and sufficient 
conditions for $\Gamma(G)$ to be regular, determine when the isolated 
vertices form a subgroup, and characterize when two such graphs are 
isomorphic.
\begin{theorem}[Regularity]
    Let \(G\) be a finite abelian group with $d(G)=2$.  Then the graph \(\Delta(G)\) is regular if and only if all Sylow subgroups of $G$ are non-cyclic.
    \end{theorem}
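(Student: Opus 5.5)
The plan is to use the two structural decompositions from Section~\ref{sec:5}. Since $d(G)=2$, every Sylow subgroup is either cyclic or of the form $\Z_{p^e}\times\Z_{p^t}$. Moreover, at least one Sylow subgroup is non-cyclic. Grouping the cyclic Sylow subgroups together, we are in exactly one of two cases. In case (a) ($k=j$) every Sylow subgroup is non-cyclic. In case (b) ($j<k$) we have $G\cong S_{p_1}\times\cdots\times S_{p_j}\times\Z_s$ with $s>1$, where $\Z_s$ collects the cyclic Sylow subgroups. The theorem therefore reduces to showing that $\Delta(G)$ is regular in case (a) and not regular in case (b).

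For the sufficiency direction, assume all Sylow subgroups are non-cyclic. By Remark~\ref{rem:decompose}, $\Delta(G)=\otimes_{i=1}^k\Delta(S_{p_i})$, and by Theorem~\ref{thm:pgraph}(ii) each factor $\Delta(S_{p_i})$ is $r_i$-regular with $r_i=p_i\varphi(p_i^{u_i-1})>0$. Proposition~\ref{pro:per}(2) then shows that every vertex of the product has degree $\prod r_i$. This is exactly Corollary~\ref{cor:case1}, so this direction is immediate.

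For necessity, suppose some Sylow subgroup is cyclic, so that $s>1$. Remark~\ref{rem:decompose} gives $\Delta(G)=\left(\otimes_{i=1}^j\Delta(S_{p_i})\right)\otimes\widehat\Gamma(\Z_s)$. By Corollary~\ref{cor:degz}, a vertex $(a_1,\dots,a_j,a_s)$ of $\Delta(G)$ has degree $r_1\cdots r_j\deg_{\widehat\Gamma(\Z_s)}(a_s)$, where $r_1\cdots r_j>0$. It therefore suffices to find two elements of $\Z_s$ with different degrees in $\widehat\Gamma(\Z_s)$ that both occur as last coordinates of vertices of $\Delta(G)$. Every $a_s\in\Z_s$ occurs, because $\widehat\Gamma(\Z_s)$ has no isolated vertices: $1$ is adjacent to everything. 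Take $a_s=1$ and $a_s=0$. By the extended Theorem~\ref{thm:deg}, $\deg(1)=s$. The element $0$ lies in the class $[n_0]$, where $n_0$ is the radical of $s$, so $\deg(0)=\frac{s}{n_0}\varphi(n_0)$. Directly, the neighbours of $0$ are the units of $\Z_s$, so $\deg(0)=\varphi(s)$. Since $s>1$, we have $\varphi(s)<s$. Hence the vertices $(a_1,\dots,a_j,1)$ and $(a_1,\dots,a_j,0)$, with each $a_i\notin\Phi(S_{p_i})$, are both non-isolated and have different degrees, so $\Delta(G)$ is not regular.

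The main point needing care is the convention in the looped graph. The claim $\deg(1)=s$ relies on counting the loop at $1$ as contributing $1$ to the degree and the product degree formula $\deg((u,v))=\deg(u)\deg(v)$. This should be checked against the actual group: $(a,1)$ generates $G$ together with $(b,1)$ for $b\neq a$ in $N_a$, and such a vertex is genuinely distinct from $(a,1)$. With that check, the degree count corresponds to honest neighbours in $\Gamma(G)$, and the argument is complete.
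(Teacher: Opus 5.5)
Your proof is correct and follows the paper's argument. Sufficiency comes from Theorem~\ref{thm:pgraph} together with Corollary~\ref{cor:case1}, and necessity from comparing $(a_1,\dots,a_j,0)$ with $(a_1,\dots,a_j,1)$ via Corollary~\ref{cor:degz}. You also make explicit two points the paper leaves implicit: that $\deg(0)=\varphi(s)<s=\deg(1)$ in $\widehat\Gamma(\Z_s)$, and that one must take $a_i\notin\Phi(S_{p_i})$ so that both chosen vertices actually lie in $\Delta(G)$.
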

    \begin{proof}
    Suppose $G\cong S_{p_1}\times \dots \times S_{p_k},$ where each Sylow component $S_{p_i}$ is non-cyclic. Then by Theorem~\ref{thm:pgraph} (for $p$ groups) and Corollary \ref{cor:case1} (for k=j), $\Delta(G)$ is a regular graph. Now to prove the necessary part, we suppose that $\Delta(G)$ is a regular graph for an abelian group $G.$ Assume that $G$ has a cyclic Sylow $p$-subgroup. Then $G\cong S_{p_1}\times S_{p_2}\times \dots S_{p_j}\times \Z_s$ such that $p_i\nmid s$ for all $i.$ Let $x=(a_1,\dots a_j,0)$ and $y=(a_1,\dots,a_j,1)\in G.$ Since $\deg_{\widehat \Ga(\Z_s)}(0)\neq \deg_{\widehat\Ga(\Z_s)} (1),$ thus by Corollary~\ref{cor:degz}, we get $\deg_{\Delta(G)}(x)\neq \deg_{\Delta(G)}(y)$ which contradicts that $\Delta(G)$ is regular. Thus $\Delta(G)$ is regular if and only if every Sylow subgroup of $G$ 
    is non-cyclic.
    \end{proof}
    \begin{theorem}[Isolated Vertices]\label{thm:subgroup}
        Let $G$ be a finite non-cyclic abelian group with $d(G) = 2$. Then 
        $\I(G)$ is a subgroup of $G$ if and only if $G$ is a $p$-group, in  particular $\I(G) = \Phi(G)$.
    \end{theorem}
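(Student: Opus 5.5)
The plan is to handle the two directions separately, using the explicit descriptions of $\I(G)$ already obtained. For the ``if'' direction, let $G$ be a $p$-group with $d(G)=2$, so $G\cong \Z_{p^e}\times\Z_{p^t}$. Theorem~\ref{thm:pgraph}(i) gives $\I(G)=\Phi(G)$ directly, and $\Phi(G)$, as an intersection of maximal subgroups, is a subgroup. This also gives the ``in particular'' clause.

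For the ``only if'' direction I will argue by contrapositive: assume $G$ is not a $p$-group and show that $\I(G)$ is not closed under addition. The proof splits according to the two decompositions of Section~\ref{sec:3}.

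\emph{Case $k=j\ge 2$.} Here $G=S_{p_1}\times\cdots\times S_{p_k}$ with each $S_{p_i}$ non-cyclic. By Proposition~\ref{pro:iso1} and Theorem~\ref{thm:pgraph}(i), a vertex is isolated exactly when some coordinate lies in $\Phi(S_{p_i})$. I will pick $u_1\in S_{p_1}\setminus\Phi(S_{p_1})$ and $u_2\in S_{p_2}\setminus\Phi(S_{p_2})$, which is possible since each $S_{p_i}$ is non-cyclic and so $\Phi(S_{p_i})\neq S_{p_i}$. Put
\[
x=(u_1,0,u_3,\dots,u_k),\qquad y=(0,u_2,u_3',\dots),
\]
where the remaining coordinates are chosen so that $x+y$ has every coordinate outside the corresponding Frattini subgroup. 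The simplest choice is $x=(u_1,0,0,\dots,0)$ and $y=(0,u_2,u_3,\dots,u_k)$ with each $u_i\notin\Phi(S_{p_i})$. Then $x\in\I(G)$ because its second coordinate is $0\in\Phi(S_{p_2})$, and $y\in\I(G)$ because its first coordinate is $0$. Their sum $(u_1,u_2,\dots,u_k)$ has all coordinates non-Frattini, so it is non-isolated. Hence $\I(G)$ is not a subgroup.

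\emph{Case $j<k$.} Here $G=S_{p_1}\times\cdots\times S_{p_j}\times\Z_s$ with $s>1$. By Proposition~\ref{pro:iso2}, the last coordinate never causes isolation, so this case needs a different witness. If $j\ge2$, the argument above works verbatim with the $\Z_s$ coordinate set to $1$. If $j=1$, the isolated set is $\Phi(S_{p_1})\times\Z_s$, which actually \emph{is} a subgroup. This is the step I expect to be the main obstacle.

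To resolve it, I will recheck the claim against the paper's convention. Proposition~\ref{pro:iso2} says a ``non-zero element'' is isolated iff some $a_i\in\I(S_{p_i})$. The elements $(a_1,0)$ with $a_1\notin\Phi$ are non-isolated because $\widehat\Ga(\Z_s)$ has no isolated vertices; in particular $0\in\Z_s$ is adjacent to the generator $1$. So for $j=1$ we do get $\I(G)=\Phi(S_{p_1})\times\Z_s$, a subgroup, which is a potential counterexample. For instance, in the paper's own Example (Case2) the isolated vertices $\{0,2\}\times\{0\}\times\Z_3$ form a subgroup.

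I would therefore first try to find an additional convention that excludes $j=1$. If none exists, the honest plan is to prove the statement as ``$\I(G)$ is a subgroup iff $G$ has at most one non-cyclic Sylow subgroup'', with $\I(G)=\Phi(S_p)\times\Z_s$ in the case $j=1$. This would flag that the stated equivalence needs $j\ne1$, or needs to be read with $\I(G)=\Phi(G)$ replaced accordingly.
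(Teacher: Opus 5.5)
Your proposal is correct, and your suspicion about the case $j=1$ is justified. The statement is false as written, and nothing in the paper rules this case out. Suppose exactly one Sylow subgroup $S_p$ is non-cyclic, so $G=S_p\times\Z_s$ with $s>1$. Then $(a,c)$ is isolated if and only if $a\in\Phi(S_p)$; when $a\notin\Phi(S_p)$, pair it with $(b,1)$ where $\langle a,b\rangle=S_p$. Hence $\I(G)=\Phi(S_p)\times\Z_s$, which is a subgroup even though $G$ is not a $p$-group.

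The smallest instance is $G=\Z_6\times\Z_2\cong(\Z_2\times\Z_2)\times\Z_3$, where $\I(G)=\{0\}\times\Z_3$. The paper's own Example (Case 2) is another one: there $\I(G)=\{0,2\}\times\{0\}\times\Z_3$, a subgroup of order $6$. In this situation $\I(G)\neq\Phi(G)=\Phi(S_p)\times\Phi(\Z_s)$, so the ``in particular'' clause does not extend either.

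The paper's proof misses this because it tacitly works in the setting of Proposition~\ref{pro:iso1}, where every Sylow subgroup is non-cyclic. Even there its witnesses fail. It takes $x=(0,\ldots,a_i,\ldots,0)$ and $y=(0,\ldots,b_j,\ldots,0)$ with $a_i\in\I(S_{p_i})$ and $b_j\in\I(S_{p_j})$. But $x+y$ still has $i$-th coordinate $a_i\in\I(S_{p_i})$, so $x+y\in\I(G)$ by the very description being used. For $(\Z_2\times\Z_2)\times(\Z_3\times\Z_3)$ the choices are even forced to be $x=y=0$.

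Your pair $x=(u_1,0,\ldots,0)$, $y=(0,u_2,\ldots,u_k)$ with $u_i\notin\Phi(S_{p_i})$ is the correct witness, since its sum has every coordinate outside the Frattini subgroups. It carries over to the case $j\ge 2$ with an arbitrary $\Z_s$-coordinate, because that coordinate never causes isolation.

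So what you have established is the correct characterization. For non-cyclic $G$ with $d(G)=2$, $\I(G)$ is a subgroup if and only if exactly one Sylow subgroup of $G$ is non-cyclic. Your ``at most one'' is the same condition, since a non-cyclic $G$ has at least one non-cyclic Sylow subgroup. In that case $\I(G)=\Phi(S_p)\times\Z_s$, and this equals $\Phi(G)$ precisely when $s=1$, that is, when $G$ is a $p$-group.
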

    \begin{proof}
      Suppose $G$ is not a $p$-group. Then $G$ has at least 
        two distinct prime divisors, and by the primary decomposition 
        $G = S_{p_1} \times \cdots \times S_{p_k}$ with $k \geq 2$. An 
        element $(a_1,\ldots,a_k) \in G$ is isolated if and only if 
        $a_i \in \I(S_{p_i})$ for at least one $i$. Hence 
        $$\I(G) = \bigcup_{i=1}^{k} S_{p_1} \times \cdots \times 
        \I(S_{p_i}) \times \cdots \times S_{p_k}.$$
        Taking $x = (0,\ldots, a_i, \ldots, 0)$ and 
        $y = (0,\ldots, b_j,\ldots, 0)$ with $a_i \in \I(S_{p_i})$, 
        $b_j \in \I(S_{p_j})$ and $i \neq j$, we have $x, y \in \I(G)$ 
        but $x + y \notin \I(G)$. So $\I(G)$ is not closed, hence not 
        a subgroup.
    
    Conversely, suppose $G$ is a non-cyclic abelian $p$-group. An element $x \in G\setminus\{e\}$ is isolated if 
        and only if $x \in \Phi(G)$ (see Theorem~\ref{thm:pgraph}).  Hence $\I(G) = \Phi(G)$, which is a subgroup.
    \end{proof}   

\textbf{Note:} For a group $G,$ we define a set $\I_G:=\{x\in G\,:\,\langle x,y\rangle\neq G \text{ for all } y\in G\}.$ Note that the set $\I_G$ of $G$ corresponds to the set $\I(G)$ in $\Gamma(G).$
\begin{lemma}
        Let $A$ and $B$ be finite 2-generated groups with $\gcd(|A|, |B|) = 1$. 
        Then $G=A \times B$ is 2-generated and
        $$|G| - |{\I}_G| = \bigl(|A| - |{\I}_A|\bigr)
        \bigl(|B| - |{\I}_B|\bigr).$$     
\end{lemma}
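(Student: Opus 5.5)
The plan is to reduce everything to one structural fact: for $\gcd(|A|,|B|)=1$ and $a,a'\in A$, $b,b'\in B$,
\[
\langle (a,b),(a',b')\rangle = G \iff \langle a,a'\rangle = A \ \text{and}\ \langle b,b'\rangle = B .
\]
This plays the role that Theorem~\ref{thm:agen} played in the abelian setting. Here, however, $A$ and $B$ need not be abelian, so the argument has to go through subgroups of a direct product of coprime-order groups rather than through Sylow decompositions. Once this equivalence is in hand, both assertions of the lemma follow quickly.

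\textbf{Step 1 (the equivalence).} Let $H=\langle (a,b),(a',b')\rangle$, and let $\pi_A,\pi_B$ be the coordinate projections.

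For the forward direction, if $H=G$ then $\pi_A(H)=\langle a,a'\rangle = A$ and $\pi_B(H)=\langle b,b'\rangle=B$.

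For the converse, I will show that every subgroup $H\le A\times B$ equals $\pi_A(H)\times\pi_B(H)$. Take $(u,v)\in H$, and put $m=|A|$, $n=|B|$. Choose integers $\lambda,\mu$ with $\lambda n+\mu m=1$. Then
\[
(u,v)^{\lambda n}=(u^{\lambda n},1)=(u^{1-\mu m},1)=(u,1).
\]
Hence $(u,1)\in H$, and symmetrically $(1,v)\in H$. So $\pi_A(H)\times 1\subseteq H$ and $1\times \pi_B(H)\subseteq H$, which gives $H=\pi_A(H)\times\pi_B(H)$. Applying this to our $H$ yields $H=\langle a,a'\rangle\times\langle b,b'\rangle$, which proves the converse. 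No commutativity is used beyond the fact that $A\times 1$ and $1\times B$ commute.

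\textbf{Step 2 ($G$ is 2-generated).} Pick generating pairs $A=\langle a,a'\rangle$ and $B=\langle b,b'\rangle$. By Step 1, $G=\langle (a,b),(a',b')\rangle$.

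\textbf{Step 3 (identifying $\I_G$).} Directly from the definition of $\I_G$ (where $y$ ranges over all of $G$, possibly $y=x$), Step 1 gives
\[
(a,b)\notin \I_G \iff \exists\, a'\in A,\ b'\in B:\ \langle a,a'\rangle=A,\ \langle b,b'\rangle=B .
\]
Because the choices of $a'$ and $b'$ are independent, this holds iff $a\notin\I_A$ and $b\notin\I_B$. Therefore
\[
G\setminus \I_G=(A\setminus\I_A)\times(B\setminus \I_B),
\]
and taking cardinalities gives the formula.

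The only genuine obstacle is Step 1 in the nonabelian case, namely the Goursat-type observation that coprimality forces every subgroup of $A\times B$ to be a direct product. The B\'ezout-power trick above handles it. I should also check that the convention allowing $y=x$ in $\I_G$ causes no mismatch with isolated vertices of $\Gamma(G)$. It does not affect Step 3, which is stated purely in terms of $\I_G$.
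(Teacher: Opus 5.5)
Your proposal is correct and follows the same route as the paper. Both arguments go through the coordinatewise characterization of generating pairs, then identify $G\setminus\I_G=(A\setminus\I_A)\times(B\setminus\I_B)$, then count. The one thing you add is an actual proof of that characterization: your B\'ezout-power argument shows every subgroup of $A\times B$ is the product of its projections. The paper only asserts this with ``Note that'', and a justification is needed because $A$ and $B$ are not assumed abelian.
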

\begin{proof}
Note that $\langle (a,b),(a',b')\rangle = G$ if and only if $\langle a,a'\rangle = A$ 
and $\langle b,b'\rangle = B$. Thus $(a,b) \notin {\I}_G$ if and 
only if $a \notin {\I}_A$ and $b \notin {\I}_B$, and counting 
such pairs gives the formula.
\end{proof}

    \begin{theorem}[Graph Isomorphism]\label{thm:isom}
        Let $G$ and $H$ be non-isomorphic finite abelian groups of the same order 
        $n = p_1^{u_1} \cdots p_k^{u_k}$, each with minimum generating set of 
        size $2$. Then $\Gamma(G) \cong \Gamma(H)$ if and only if, for each prime 
        $p_i \mid n$, the Sylow $p_i$-subgroups $S_{p_i}(G)$ and $S_{p_i}(H)$ 
        are either both cyclic or both non-cyclic.
        \end{theorem}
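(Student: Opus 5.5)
The plan is to prove sufficiency by building the isomorphism one Sylow factor at a time. For necessity, I will use a single graph invariant, the number of non-isolated vertices, and show that it detects which Sylow subgroups are non-cyclic.

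\textbf{Sufficiency.} Suppose that for every $p_i$ the Sylow subgroups $S_{p_i}(G)$ and $S_{p_i}(H)$ are both cyclic or both non-cyclic. Both have order $p_i^{u_i}$.
\begin{enumerate}
\item If both are cyclic, they are isomorphic to $\Z_{p_i^{u_i}}$. So the cyclic parts of $G$ and $H$ are the same group $\Z_s$, with $s$ the product of the prime powers of the cyclic Sylow subgroups.
\item If both are non-cyclic, say $\Z_{p^e}\times\Z_{p^t}$ and $\Z_{p^{e'}}\times\Z_{p^{t'}}$ with $e+t=e'+t'=u$, then Theorem~\ref{thm:pgraph} describes each generating graph as $K_{(p+1)\times\varphi(p^{u-1})}$ together with $|\Phi|=p^{u-2}$ isolated vertices. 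This description depends only on $p$ and $u$, so the two generating graphs are isomorphic.
\end{enumerate}
By Theorem~\ref{thm:case1} or Theorem~\ref{thm:case2} (and Remark~\ref{rem:decompose}), $\Gamma(G)$ and $\Gamma(H)$ are Kronecker products of these pairwise isomorphic factors, together with the same $\widehat\Gamma(\Z_s)$ when a cyclic part is present. A Kronecker product of isomorphic factors is isomorphic via the product of the factor isomorphisms, which gives $\Gamma(G)\cong\Gamma(H)$.

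\textbf{Necessity.} Assume $\Gamma(G)\cong\Gamma(H)$, and let $A$ (resp.\ $B$) be the set of primes whose Sylow subgroup in $G$ (resp.\ $H$) is non-cyclic. The number $|G|-|\I_G|$ of non-isolated vertices is a graph invariant, so $|G|-|\I_G|=|H|-|\I_H|$.
\begin{enumerate}
\item Applying the Lemma repeatedly to the coprime Sylow factors gives $|G|-|\I_G|=\prod_p\bigl(|S_p|-|\I_{S_p}|\bigr)$.
\item For a cyclic factor $\Z_{p^u}$ every element is non-isolated, because it is adjacent to a generator (or to $0$ if it is itself a generator), so the factor contributes $p^u$.
\item For a non-cyclic factor, Theorem~\ref{thm:pgraph} gives the contribution $p^u-p^{u-2}=p^{u-2}(p^2-1)$.
\end{enumerate}
Dividing by $n$, we get $\prod_{p\in A}(1-p^{-2})=\prod_{q\in B}(1-q^{-2})$. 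Cancelling the common primes in $A\cap B$ and clearing denominators yields
\[
\prod_{p\in A\setminus B}(p^2-1)\prod_{q\in B\setminus A}q^2=\prod_{q\in B\setminus A}(q^2-1)\prod_{p\in A\setminus B}p^2 .
\]

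\textbf{Main obstacle.} The key step is showing that this identity forces $A\setminus B=B\setminus A=\emptyset$. Suppose the symmetric difference is nonempty, let $r$ be its largest prime, and say $r\in A\setminus B$. Then $r^2$ divides the right-hand side.

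On the left-hand side, the factors $q^2$ with $q\in B\setminus A$ contribute no factor of $r$, since $q\neq r$. A factor $p^2-1=(p-1)(p+1)$ with $p<r$ is divisible by $r$ only if $r=p+1$, which forces $p=2$ and $r=3$, and even then $3\,\|\,2^2-1$. Hence the $r$-adic valuation of the left-hand side is at most $1$. This contradicts the fact that $r^2$ divides the right-hand side. The case $r\in B\setminus A$ is symmetric.

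Therefore $A=B$, which says exactly that for each prime the two Sylow subgroups are both cyclic or both non-cyclic.
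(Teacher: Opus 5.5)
Your proof is correct and follows the paper's route: sufficiency via the Kronecker factorization over Sylow components (the paper phrases this as $\Delta(G)\cong\Delta(H)$ together with $|\I_G|=|\I_H|$), and necessity via the number of isolated vertices, computed with the coprime-product lemma. Your largest-prime valuation argument supplies a step the paper only asserts, namely that a type mismatch forces $\prod_i c_i(G)\neq\prod_i c_i(H)$; this is not automatic when there are mismatches at several primes in opposite directions. One small point: you should say explicitly that the left-hand factor $r^2-1$ is itself prime to $r$, since your case analysis only covers $p<r$.
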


\begin{proof}
Since the Sylow subgroups have coprime orders, over all Sylow subgroups, by setting 
$c_i = |S_{p_i}| - |{\I}_{S_{p_i}}|,$ from the above lemma, we get
$$|{\I}_G| = |G| - \prod_{i=1}^{k} c_i.$$

From Theorem~\ref{thm:pgraph}, the values of $c_i$ are
$$c_i = \begin{cases} p_i^{u_i}
& \text{if } S_{p_i} \text{ is cyclic,}\\[4pt] 
p_i^{u_i-2}(p_i^2-1) & \text{if } S_{p_i} \text{ is non-cyclic.}
\end{cases}$$

Note that $c_i$ depends only on $p_i$, $e_i$ and whether $S_{p_i}$ is 
cyclic or not, not on the internal structure of a non-cyclic Sylow 
subgroup.

To prove the converse, we suppose $S_{p_i}(G)$ and $S_{p_i}(H)$ have the 
same cyclic (non-cyclic) type at every prime $p_i$. Then $c_i(G) = c_i(H)$ 
for all $i$, so $|\I_G| = |\I_H|$. Moreover, by Theorem~\ref{thm:pgraph}, 
$\Delta(S_{p_i}(G)) \cong \Delta(S_{p_i}(H))$ at each prime (both are 
$K_{(p_i+1)\times \varphi(p_i^{u_i-1})}$ in the non-cyclic case, and 
identical in the cyclic case since the structure depends only on $p_i$ and 
$u_i$). Since adjacency in $\Gamma(G)$ factors over Sylow components, 
together these componentwise isomorphisms gives a graph isomorphism from $\Delta(G)$ to $\Delta(H).$ Also, $|\I_G|=|\I_H|,$ hence
$\Gamma(G) \cong \Gamma(H)$.

\medskip
 To prove the forward direction, suppose on the contrary that $S_{p_j}(G)$ is cyclic and $S_{p_j}(H)$ 
is non-cyclic for some prime $p_j.$ Then
$$c_j(G) = p_j^{u_j}; \qquad 
c_j(H) = p_j^{u_j-2}(p_j^2-1) = p_j^{u_j-2}(p_j-1)(p_j+1).$$ Thus $c_j(G)\neq c_j(H).$ Moreover, a mismatch at any single prime forces the following relation:

$$\prod_{i=1}^k c_i(G) \neq \prod_{i=1}^k c_i(H),$$
hence $|\I_G| \neq |\I_H|$. Since the number of isolated vertices is a 
graph invariant, $\Gamma(G) \not\cong \Gamma(H).$

\end{proof}

\section{Spectrum of the graph}\label{sec:6}
In this section we investigate the adjacency and Laplacian spectra of the generating graph of abelian groups.

\subsection{Adjacency Spectrum}
 In this section, we compute the adjacency matrix spectrum of $\Gamma(G)$ for abelian groups $G$. We first determine the spectra for abelian $p$-groups. We denote the adjacency matrix of a graph $\G$ by $A(\G)$. For notational convinence, we take $A$ for the graph $\Gamma(G).$
 \begin{theorem}\label{thm:padjc}
            Let $G=\Z_{p^e}\times  \Z_{p^t},$ where $p$ is a prime. Then the spectrum of $A$ is given by
            $$\mathrm{Spec}(A)=\begin{pmatrix}-\alpha&0&p\alpha\\ p&p^{e+t}-(p+1)&1\end{pmatrix},$$  where $\alpha=\varphi(p^{e+t-1}).$ 
        \end{theorem}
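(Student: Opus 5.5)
By Theorem~\ref{thm:pgraph}, $\Gamma(G)$ is the disjoint union of the complete multipartite graph $K_{(p+1)\times\alpha}$ on $G\setminus\Phi(G)$ with $|\Phi(G)|=p^{e+t-2}$ isolated vertices, where $\alpha=\varphi(p^{e+t-1})$. So the plan is to order the vertices as $\Phi(G)$ followed by the parts $K_i\setminus\Phi(G)$, $1\le i\le p+1$, from Proposition~\ref{pro:pgen}. In this ordering
\[
A=\mathbf{0}_{p^{e+t-2}}\oplus\bigl((J_{p+1}-I_{p+1})\otimes J_\alpha\bigr).
\]
The zero block contributes the eigenvalue $0$ with multiplicity $p^{e+t-2}$. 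It then remains to find the spectrum of $B=(J_{p+1}-I_{p+1})\otimes J_\alpha$.

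For $B$ I would use the Kronecker eigenvalue rule, i.e.\ Corollary~\ref{cor:kspec} applied at the matrix level. The matrix $J_{p+1}-I_{p+1}$ is the adjacency matrix of $K_{p+1}$, with eigenvalues $p$ (once) and $-1$ (with multiplicity $p$). The matrix $J_\alpha$ has eigenvalues $\alpha$ (once) and $0$ (with multiplicity $\alpha-1$). Taking products, $B$ has eigenvalue $p\alpha$ once, $-\alpha$ with multiplicity $p$, and $0$ with multiplicity $(p+1)(\alpha-1)$.

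Alternatively, one can argue directly with the equitable partition into the $p+1$ parts. The quotient matrix is $\alpha(J_{p+1}-I_{p+1})$, which by Theorem~\ref{thm:specA} yields the eigenvalues $p\alpha$ and $-\alpha$ (the latter $p$ times). Since $\mathrm{rank}(A)=p+1$, every remaining eigenvalue is $0$.

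Finally I would add up the zero multiplicities. The total is $p^{e+t-2}+(p+1)(\alpha-1)$, and since $(p+1)\alpha=(p+1)p^{e+t-2}(p-1)=p^{e+t}-p^{e+t-2}$, this simplifies to $p^{e+t}-(p+1)$. Checking that the multiplicities sum to $1+p+p^{e+t}-(p+1)=p^{e+t}=|G|$ confirms the stated spectrum.

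There is no real obstacle here. The only care needed is in the block description, namely using Proposition~\ref{pro:pgen} to see that $\Phi(G)$ consists entirely of isolated vertices and that each part has exactly $\alpha$ elements, together with the arithmetic of the zero multiplicity.
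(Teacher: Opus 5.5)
Your proof is correct, and your main route differs from the paper's. The paper argues only through the equitable partition of $\Delta(G)$ into its $p+1$ parts, which is essentially your alternative paragraph. It computes the quotient matrix $\alpha(J-I)$ and invokes Theorem~\ref{thm:specA} to obtain $p\alpha$ and $-\alpha$. It then asserts that their multiplicities are $1$ and $p$ and that every remaining eigenvalue of $A$ is $0$. Your primary argument writes $A=\mathbf{0}\oplus\bigl((J_{p+1}-I_{p+1})\otimes J_\alpha\bigr)$ and multiplies spectra. This gives every eigenvalue with its multiplicity in one step, including the $(p+1)(\alpha-1)$ zeros inside $\Delta(G)$, and needs no multiplicity-preserving form of the quotient-matrix theorem. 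It also anticipates the Kronecker-product viewpoint the paper later uses for general $G$. One caution about your alternative route: Theorem~\ref{thm:specA} as stated carries no multiplicity information. So $\mathrm{rank}(A)=p+1$ only gives $m(p\alpha)+m(-\alpha)=p+1$. To get the split $1+p$ you need one more line. For example, $\mathrm{tr}(A)=0$ forces $m(-\alpha)=p\,m(p\alpha)$. Alternatively, use that the $p$-dimensional $(-\alpha)$-eigenspace of the quotient matrix lifts injectively into that of $A$. The paper's proof omits both this step and the rank count, so your version is the more complete one.
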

        \begin{proof}
            To determine the nonzero eigenvalues of $A$, it suffices to analyze 
            $\Delta(G)$, since each isolated vertex contributes the eigenvalue $0$. 
    We have 
            \[
            \Delta(G) \cong K_{(p+1)\times \varphi(p^{e+t-1})}.
            \] (see Theorem~\ref{thm:pgraph}).
            The $(p+1)$ parts give an equitable partition of $V(\Delta(G))$, and 
            the corresponding quotient matrix of $A(\Delta(G))$ is of size
            $(p+1)\times(p+1)$, given by 
            $$
            \widetilde{A} = \alpha(J - I), \quad \alpha = \varphi(p^{e+t-1}),
            $$
            where $J$ and $I$ are matrices of size
     $(p+1)\times (p+1)$. The eigenvalues of $\widetilde{A}$ are $\alpha p$ with 
            multiplicity $1$ and $-\alpha$ with multiplicity $p$. By 
            Theorem~\ref{thm:specA}, these are eigenvalues of $A(\Delta(G))$, 
            hence of $A$. The remaining eigenvalues of $A$ are all zero, with 
            multiplicity $|V(\Gamma(G))| - (p+1)$. This completes the proof.
        \end{proof}
            To determine the spectrum of $A$ in the general case, we use the 
            spectral data of $A(\Gamma(S_{p_i}))$ for each prime $p_i$ dividing $|G|$. 
            Let $X = \{1, 2, \ldots, k\}$ index the primes $p_1, p_2, \ldots, p_k$ 
            dividing $|G|$, and denote by $E^{+}$ and $E^{-}$ the subsets of $X$ 
            corresponding to positive and negative eigenvalues of $A(\Gamma(S_{p_i}))'s$, 
            respectively and partitioned the set $X$. Now we will consider the cases when all sylow subgroups are non-cyclic and when at least one is cyclic.

\begin{theorem}\label{thm:Acase}
    Let $G = S_{p_1} \times \cdots \times S_{p_k}$ where each $S_{p_i}$ 
    is a non-cyclic abelian $p_i$-group of order $p_i^{u_i}$. The 
    non-zero adjacency eigenvalues of $\Gamma(G)$ are
    $$\lambda_{E^-} = (-1)^{|E^-|}\varphi\!\left(\frac{|G|}{p_1\cdots 
    p_k}\right)\prod_{j \in E^+} p_j$$
    parametrized over all subsets $E^- \subseteq \{1,\ldots,k\}$ with 
    $E^+ = \{1,\ldots,k\} \setminus E^-$, each with multiplicity 
    $\prod_{i \in E^-} p_i$ or 1 if $E^-$ is empty. The eigenvalue $0$ has multiplicity 
    $|G| - \sigma(p_1 \cdots p_k)$, where $\sigma$ denotes the sum of 
    divisors function.
    \end{theorem}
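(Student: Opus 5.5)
The plan is to compute the spectrum of $A$ entirely from the Kronecker decomposition $\Gamma(G)=\otimes_{i=1}^{k}\Gamma(S_{p_i})$ of Theorem~\ref{thm:case1}. Under the identification $G\cong S_{p_1}\times\cdots\times S_{p_k}$ of vertex sets, Theorem~\ref{thm:kadd} gives, after a suitable ordering of vertices,
\[
A=A(\Gamma(S_{p_1}))\otimes\cdots\otimes A(\Gamma(S_{p_k})).
\]
Each factor is real symmetric, hence orthogonally diagonalizable. If $A(\Gamma(S_{p_i}))=P_iD_iP_i^{T}$, then $A=(\otimes P_i)(\otimes D_i)(\otimes P_i)^{T}$. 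So the multiset of eigenvalues of $A$ is exactly the multiset of all products $\prod_i\lambda_i$, where $\lambda_i$ runs over the eigenvalues of $A(\Gamma(S_{p_i}))$ counted with multiplicity. This is the multiset-level form of Corollary~\ref{cor:kspec}.

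Next, insert the $p$-group spectra from Theorem~\ref{thm:padjc}. Write $\alpha_i=\varphi(p_i^{u_i-1})$. Then $A(\Gamma(S_{p_i}))$ has eigenvalue $p_i\alpha_i$ with multiplicity $1$, eigenvalue $-\alpha_i$ with multiplicity $p_i$, and eigenvalue $0$ with the remaining multiplicity. A product $\prod_i\lambda_i$ is nonzero exactly when every $\lambda_i$ is nonzero. Such a choice is determined by the set $E^-$ of indices at which we take $-\alpha_i$; the complement $E^+$ consists of the indices at which we take $p_i\alpha_i$. The resulting product is
\[
(-1)^{|E^-|}\Bigl(\prod_{i=1}^{k}\alpha_i\Bigr)\prod_{j\in E^+}p_j ,
\]
and it arises with multiplicity $\prod_{i\in E^-}p_i$, which is $1$ when $E^-=\emptyset$. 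Since the $p_i^{u_i-1}$ are pairwise coprime, multiplicativity of $\varphi$ gives
\[
\prod_i\alpha_i=\varphi\Bigl(\prod_i p_i^{u_i-1}\Bigr)=\varphi\Bigl(\tfrac{|G|}{p_1\cdots p_k}\Bigr),
\]
which is the stated formula for $\lambda_{E^-}$.

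For the multiplicity of $0$, count the nonzero eigenvalues with multiplicity:
\[
\sum_{E^-\subseteq\{1,\ldots,k\}}\prod_{i\in E^-}p_i=\prod_{i=1}^{k}(1+p_i)=\sigma(p_1\cdots p_k),
\]
using that $p_1\cdots p_k$ is squarefree. The eigenvalue $0$ therefore has multiplicity $|G|-\sigma(p_1\cdots p_k)$.

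The main obstacle is bookkeeping rather than ideas. First, Corollary~\ref{cor:kspec} is stated only at the level of eigenvalue lists, so the multiplicity claim has to be justified through the simultaneous diagonalization above. Second, different subsets $E^-$ may in principle produce the same numerical value. In that case the statement should be read as a parametrization of the multiset, with multiplicities adding over coinciding values. I would state this convention explicitly rather than try to show that all the $\lambda_{E^-}$ are distinct.
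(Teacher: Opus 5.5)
Your proposal is correct and follows the same route as the paper's proof: the Kronecker decomposition of Theorem~\ref{thm:case1}, the $p$-group spectra of Theorem~\ref{thm:padjc}, products of factor eigenvalues indexed by $E^-$, and the count $\sum_{E^-}\prod_{i\in E^-}p_i=\prod_i(1+p_i)=\sigma(p_1\cdots p_k)$; your orthogonal-diagonalization step makes the multiset version of Corollary~\ref{cor:kspec} explicit, which the paper leaves implicit. Your final caveat is unnecessary, because $|\lambda_{E^-}|$ determines the squarefree number $\prod_{j\in E^+}p_j$ and hence $E^+$, so the $2^k$ values are pairwise distinct and the stated multiplicities are exact.
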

\begin{proof}
    From Theorem \ref{thm:case1}, we have $$\Ga(G)\cong \otimes_{i=1}^k \Ga(S_{p_i}).$$ 
To determine non-zero eigenvalues of the matrix $A,$ it is suffice to find all the eigenvalues of $A(\Ga(S_{p_i}))$ for each $i.$ From the previous theorem, the spectrum of $\Ga(S_{p_i})$ is given by
\begin{equation*}
    \mathrm{Spec}(A(\Ga(S_{p_i})))=
    \begin{pmatrix}-\varphi(p_i^{u_i-1})&0&p_i\varphi(p_i^{u_i-1})\\ p_i&p_i^{u_i}-(p_i+1)&1\end{pmatrix}.
\end{equation*}
Since corresponding to each $S_{p_i},$ we have three distinct eigenvalues. We denote $\mu_{i,1}<\mu_{i,2}<\mu_{i,3}$ as the eigenvalues of $A(\Ga(S_{p_i}))$ with multiplicity $m_1,m_2$ and $m_3,$ respectively. 
Since $\Gamma(G)$  is the Kronecker product of graphs $\Ga(S_{p_i}),$ the eigenvalues of $\Ga(G)$ are of the form
 $\prod\limits_{i,\,a_j} \mu_{i,\,a_j},$ where $i\in\{1,2,\dots, k\}$ and $ a_j\in \{1,2,3\}$ (using Corollary~\ref{cor:kspec}). For further simplification, let $P=E^-\bigcup E^+$ be a partition of the set $X=\{1,2,\dots,k\},$ with the assumption that one of the parts can be empty. Then any non-zero eigenvalue can be expressed in the following simplified form:
$$\prod\limits_{i,\,a_j} \mu_{i,\,a_j}=\prod_{i\in E^-}\mu_{i,a_1}\prod_{j\in E^+} \mu_{j,a_3}=(-1)^{\left|E^-\right|} \varphi\left(\dfrac{|G|}{p_1p_2\dots p_k}\right)\prod\limits_{j\in E^+}p_j$$
and each with multiplicity  $\prod\limits_{i\in E^-}p_i.$ 
Also note that the number of non-zero eigenvalues of $A$ are exactly equal to $\sigma(n_0),$ which basically depends on what indices are occured in the subset $E^-.$ However, the rest of the eigenvalues are zero. This completes the proof.
\end{proof}

\begin{theorem}\label{thm:Acase2}
    Let $G = S_{p_1}\times\cdots\times S_{p_j}\times\mathbb{Z}_s$, where 
    $S_{p_i}$ is a non-cyclic abelian $p_i$-group of order $p_i^{u_i}$ 
    for $1\leq i\leq j$, and $s = q_1^{t_1}\cdots q_k^{t_k}$ with 
    distinct primes $q_1 < \cdots < q_k$. Set $\bar{s} = \prod_{i=1}^{j}
    p_i^{u_i}$, $\bar{s}_0 = p_1\cdots p_j$, and $s_0 = q_1\cdots q_k$. 
    Let $X = \{1,\ldots,j\}$ and let $(E^+, E^-)$ be a partition of $X$. 
    The non-zero adjacency eigenvalues of $\Gamma(G)$, counted with 
    multiplicity, are of the form
    $$(-1)^{|E^-|}\frac{s}{s_0}\sqrt{\varphi(s_0)}
    \varphi\!\left(\frac{\bar{s}}{\bar{s}_0}\right)\prod_{m\in E^+}
    p_m\prod_{\beta=1}^{k}u_{\beta,\ell_\beta},$$
    parametrized over all partitions $(E^+,E^-)$ of $X$ and all choices 
    $(\ell_1,\ldots,\ell_k)\in\{1,2\}^k$, where
    $$u_{\beta,\ell_\beta} = \frac{\sqrt{\varphi(q_{k+1-\beta})} \pm 
    \sqrt{4+\varphi(q_{k+1-\beta})}}{2}, \qquad \ell_\beta = 1,2.$$
    Each such eigenvalue has multiplicity $\prod_{i\in E^-}p_i$ or 1 if $E^-$ is empty. The 
    eigenvalue $0$ has multiplicity $|G| - 2^k\sigma(\bar{s}_0)$.
    \end{theorem}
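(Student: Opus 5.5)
The plan is to use the Kronecker decomposition of Theorem~\ref{thm:case2},
$$\Gamma(G)=\Bigl(\otimes_{i=1}^{j}\Gamma(S_{p_i})\Bigr)\otimes\widehat\Gamma(\Z_s),$$
together with Theorem~\ref{thm:kadd}. These give $A(\Gamma(G))=A(\Gamma(S_{p_1}))\otimes\cdots\otimes A(\Gamma(S_{p_j}))\otimes\widehat A$. The loop convention in the Remark of Section~\ref{sec:2} guarantees that the Kronecker spectral statements still apply when the last factor carries loops. By Corollary~\ref{cor:kspec}, the spectrum of $A(\Gamma(G))$ is therefore the multiset of all products $\mu_1\cdots\mu_j\,\nu$. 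Here $\mu_i$ runs over $\mathrm{Spec}(A(\Gamma(S_{p_i})))$ and $\nu$ runs over $\mathrm{Spec}(\widehat A)$, each counted with multiplicity. A product is non-zero exactly when every factor is non-zero, so it suffices to collect the non-zero spectra of the factors.

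For the $p$-group factors, I would reuse the computation in the proof of Theorem~\ref{thm:Acase}, which relies on Theorem~\ref{thm:padjc}. Choosing $-\varphi(p_i^{u_i-1})$ for $i\in E^-$ (multiplicity $p_i$) and $p_i\varphi(p_i^{u_i-1})$ for $i\in E^+$ (multiplicity $1$) gives the product
$$(-1)^{|E^-|}\varphi(\bar s/\bar s_0)\prod_{m\in E^+}p_m,$$
with multiplicity $\prod_{i\in E^-}p_i$. This step uses the multiplicativity of $\varphi$ on the coprime values $p_i^{u_i-1}$, and the identity $\prod_i p_i^{u_i-1}=\bar s/\bar s_0$.

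For the cyclic factor, Theorem~\ref{thm:Aloop} shows that the non-zero eigenvalues of $\widehat A$ are exactly the eigenvalues of $\widetilde Q_s=\frac{s}{s_0}\widetilde Q_{s_0}$. By Corollary~\ref{cor:Etensor}, these eigenvalues are $\frac{s}{s_0}\sqrt{\varphi(s_0)}\prod_{\beta}u_{\beta,\ell_\beta}$ for $(\ell_1,\ldots,\ell_k)\in\{1,2\}^k$. All of them are non-zero, since each $u_{\beta,\ell}\neq 0$ because $u_{\beta,1}u_{\beta,2}=-1$. Each has multiplicity one as a formal parametrization, for $2^k$ values in total, and the remaining $s-2^k$ eigenvalues of $\widehat A$ are $0$.

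Multiplying the two parts gives the stated formula and multiplicities. For the zero count, the number of non-zero eigenvalues with multiplicity is
$$\sum_{E^-\subseteq X}\prod_{i\in E^-}p_i\cdot 2^k=\prod_{i=1}^{j}(1+p_i)\cdot 2^k=2^k\sigma(\bar s_0),$$
because $\bar s_0$ is squarefree. The multiplicity of $0$ is therefore $|G|-2^k\sigma(\bar s_0)$.

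The main obstacle I expect is justifying the use of Theorem~\ref{thm:Aloop}. Its stated characteristic polynomial has a stray factor $\frac{n}{n_0}$, so I would instead argue directly. The matrix $\widehat A$ has blocks that are constant ($J$ or $\mathbf 0$) on the $2^r$ equitable classes, so its rank is at most $2^r$. Theorem~\ref{thm:specA} then shows that the eigenvalues of the quotient $T^{\widehat A}$, which is similar to $\widetilde Q_s$, are eigenvalues of $\widehat A$. $T^{\widehat A}$ is nonsingular, since $\det\widetilde Q_{s_0}$ is a product of non-zero $u$'s. So these $2^k$ eigenvalues exhaust the non-zero part of the spectrum, with the correct multiplicities coming from the spectrum of $T^{\widehat A}$, and all other eigenvalues vanish.
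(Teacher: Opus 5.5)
Your proposal is correct and follows the same route as the paper's proof: the Kronecker factorization of Theorem~\ref{thm:case2}, the factor spectra from Theorem~\ref{thm:padjc} (as in Theorem~\ref{thm:Acase}) and from Theorem~\ref{thm:Aloop} with Corollary~\ref{cor:Etensor}, products via Corollary~\ref{cor:kspec}, and the count $2^k\sigma(\bar s_0)$. Your direct argument for the non-zero spectrum of $\widehat A$ (rank at most $2^k$ plus a nonsingular quotient) is a sensible tightening of the paper's appeal to Theorem~\ref{thm:Aloop}, whose displayed polynomial does carry a misplaced scalar factor; to transfer multiplicities you need the standard stronger form of Theorem~\ref{thm:specA}, namely that the characteristic polynomial of the quotient divides that of $\widehat A$, since set containment alone does not control multiplicities.
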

\begin{proof}
Recall that $$\Ga(G)=\left(\otimes_{i=1}^{j}\Ga(S_{p_i})\right)\otimes\, \widehat\Ga(\Z_s).$$ 
Let $P= E^- \bigcup E^+$ be a partition the subset $X=\{1,2,\dots, j\}$ which corresponds to negative and positive eigenvalues of $A(\Ga(S_{p_i})),$ where $i\in X.$ From Theorem~\ref{thm:Aloop} and Corollary~\ref{cor:Etensor}, the non-zero eigenvalues of $\widehat A$ of $\widehat\Ga(\Z_s)$ are given by 
$\dfrac{s}{s_0}\sqrt{\varphi(s_0)} \prod\limits_{\beta=j+1}^{k} u_{\beta,\ell_\beta}$ 
parametrized over all choices $(\ell_1,\ldots,\ell_k)\in\{1,2\}^k$, 
where for each $\beta$,
$$u_{\beta,1} = \frac{\sqrt{\varphi(q_{k+1-i})} + \sqrt{4 + \varphi(q_{k+1-\beta})}}{2}, \qquad u_{i,2} = \frac{\sqrt{\varphi
(q_{k+1-\beta})} - \sqrt{4 + \varphi(q_{k+1-\beta})}}{2}$$
are the two eigenvalues of the $i$-th factor matrix $U_i$. Combining Theorem~\ref{thm:Acase} and Corollary~\ref{cor:kspec} we get the required expression. Since there are exacty $2^k\sigma(\bar s_0)$ non-zero eigenvalues of the adjacency matrix, hence the rest are zero. 
\end{proof}
\subsection{Laplacian spectrum}
In this section we determine the spectrum of the Laplacian matrix of abelian groups. We first list the Laplacian eigenvalues of $\Gamma(G),$ of  $p$-groups.

\begin{theorem}
    Let $G$ be a $p$-group of the form $\Z_{p^e}\times \Z_{p^t.}$ Then the Laplacian spectrum is given by
$$\mathrm{Spec}(L)=\begin{pmatrix}0&(p+1)\alpha&p\alpha\\
 1+p^{e+t-2}&p&(p+1)(\alpha-1)\end{pmatrix},$$ 
 where $\alpha=\phi(p^{e+t-1}).$ 
\end{theorem}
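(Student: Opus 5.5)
The plan is to split $\Gamma(G)$ into its isolated vertices and $\Delta(G)$, and to read off the Laplacian spectrum of $\Delta(G)$ from its adjacency spectrum using regularity. Order the vertices so that those of $\Phi(G)=\I(G)$ (Theorem~\ref{thm:pgraph}(i)) come last. The Laplacian then becomes block diagonal,
\[
L=\begin{pmatrix} L(\Delta(G)) & \mathbf{0}\\ \mathbf{0} & \mathbf{0}\end{pmatrix},
\]
where the zero block has order $|\Phi(G)|=p^{e+t-2}$. This block contributes the eigenvalue $0$ with multiplicity $p^{e+t-2}$. It therefore suffices to compute the spectrum of $L(\Delta(G))$.

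By Theorem~\ref{thm:pgraph}(ii)--(iii), $\Delta(G)\cong K_{(p+1)\times\alpha}$ is $p\alpha$-regular, where $\alpha=\varphi(p^{e+t-1})$. Hence
\[
L(\Delta(G))=p\alpha I-A(\Delta(G)),
\]
so each eigenvalue $\lambda$ of $A(\Delta(G))$ gives the Laplacian eigenvalue $p\alpha-\lambda$ with the same multiplicity.

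To obtain the full adjacency spectrum of $\Delta(G)$, and not only the quotient eigenvalues, order the vertices by parts and write
\[
A(\Delta(G))=(J_{p+1}-I_{p+1})\otimes J_\alpha .
\]
By the eigenvalue rule for Kronecker products (as in Corollary~\ref{cor:kspec}), its eigenvalues are the products of those of the two factors. The matrix $J_{p+1}-I_{p+1}$ has eigenvalues $p$ (once) and $-1$ ($p$ times). The matrix $J_\alpha$ has eigenvalues $\alpha$ (once) and $0$ ($\alpha-1$ times). Thus $A(\Delta(G))$ has eigenvalues $p\alpha$ with multiplicity $1$, $-\alpha$ with multiplicity $p$, and $0$ with multiplicity $(p+1)(\alpha-1)$. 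This agrees with Theorem~\ref{thm:padjc}.

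Applying $\lambda\mapsto p\alpha-\lambda$ gives the spectrum of $L(\Delta(G))$:
\begin{itemize}
\item $p\alpha \mapsto 0$, multiplicity $1$;
\item $-\alpha \mapsto (p+1)\alpha$, multiplicity $p$;
\item $0 \mapsto p\alpha$, multiplicity $(p+1)(\alpha-1)$.
\end{itemize}
Adding the $p^{e+t-2}$ zeros from the isolated block gives total multiplicity $1+p^{e+t-2}$ for the eigenvalue $0$, which is the stated spectrum.

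As a final check, the multiplicities sum to
\[
1+p^{e+t-2}+p+(p+1)(\alpha-1)=(p+1)\alpha+p^{e+t-2}=p^{e+t-2}(p^2-1)+p^{e+t-2}=p^{e+t},
\]
which is $|G|$. There is no real obstacle here. The only point needing care is to justify the multiplicity of the eigenvalue $0$ of $A(\Delta(G))$ through the Kronecker factorization, since the equitable-partition argument of Theorem~\ref{thm:specA} alone yields only the quotient eigenvalues.
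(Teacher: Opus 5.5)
Your proof is correct and follows the paper's route. Like the paper, you discard the $p^{e+t-2}$ isolated vertices of $\Phi(G)$, use the $p\alpha$-regularity of $\Delta(G)\cong K_{(p+1)\times\alpha}$ to write $L(\Delta(G))=p\alpha I-A(\Delta(G))$, and shift the adjacency eigenvalues. The paper simply cites Theorem~\ref{thm:padjc} for the adjacency spectrum, whereas your factorization $A(\Delta(G))=(J_{p+1}-I_{p+1})\otimes J_\alpha$ also pins down the multiplicity $(p+1)(\alpha-1)$ of the eigenvalue $0$ and makes the total $1+p^{e+t-2}$ Laplacian zeros explicit.
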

\begin{proof}
    To find the non-zero eigenvalues of $L,$ it is sufficient to find the eigenvalues of $L(\Delta(G)).$
    Recall that $$\Delta(G)\cong K_{
    (p+1)\times \varphi(p^{e+t-1})}.$$  Since $$L(\Delta(G))=p\alpha I-A(\Delta(G)).$$ Note that the size of $L(\Delta(G))$ is $(p+1)\alpha\times (p+1)\alpha.$ Clearly, the eigenvalues are $p\alpha-\lambda_i,$ where $\lambda_i$'s are the eigenvalues of $A(\Delta(G))$ (see Theorem~\ref{thm:padjc}). This completes the proof.
\end{proof}
Now let us investigate the general cases.\par
\begin{theorem}
    Let $G=S_{p_1}\times S_{p_2}\times\dots \times S_{p_k},$ where each Sylow $p_i$-subgroup is non-cyclic and $|S_{p_i}|=p_i^{u_i}$.  Then the spectrum of the Laplacian matrix of $\Gamma(G),$ denoted as $\mathrm{Spec}(L),$ is given by
    \begin{equation*}
        \begin{pmatrix}d-\left((-1)^{\left|E^-\right|} \varphi\left(\frac{|G|}{p_1p_2\dots p_k}\right)\prod\limits_{j\in E^+}p_j\right)&d&0\\ \prod\limits_{i\in E^-}p_i&|G|-|\I(G)|-\sigma(p_1p_2\dots p_k)&|\I(G)|+1\end{pmatrix},
    \end{equation*}
    where $d$ is the degree of the graph $\Delta(G).$
    
    \end{theorem}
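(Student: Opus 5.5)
The plan is to reduce everything to the regular graph $\Delta(G)$ and then read off the Laplacian spectrum from the adjacency spectrum. Each isolated vertex of $\Gamma(G)$ gives a zero row and column in $L$. After ordering the vertices so that $\I(G)$ comes last, $L$ becomes block diagonal, $L=L(\Delta(G))\oplus \mathbf{0}_{|\I(G)|}$. So $\mathrm{Spec}(L)$ is $\mathrm{Spec}(L(\Delta(G)))$ together with the eigenvalue $0$ of multiplicity $|\I(G)|$.

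By Corollary~\ref{cor:case1}, $\Delta(G)$ is $d$-regular with $d=\prod_i r_i$, where $r_i=p_i\varphi(p_i^{u_i-1})$. Hence $L(\Delta(G))=dI-A(\Delta(G))$. Every eigenvalue $\lambda$ of $A(\Delta(G))$ therefore gives the eigenvalue $d-\lambda$ of $L(\Delta(G))$, with the same multiplicity.

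Next I would identify the spectrum of $A(\Delta(G))$. By Remark~\ref{rem:decompose}, $\Delta(G)=\otimes_{i=1}^k\Delta(S_{p_i})$. Each factor is $K_{(p_i+1)\times\varphi(p_i^{u_i-1})}$. From the proof of Theorem~\ref{thm:padjc}, each factor has eigenvalues $p_i\varphi(p_i^{u_i-1})$ (once), $-\varphi(p_i^{u_i-1})$ ($p_i$ times), and $0$ with multiplicity $(p_i+1)\varphi(p_i^{u_i-1})-(p_i+1)$. Corollary~\ref{cor:kspec} then gives the spectrum of the product as all products of factor eigenvalues. The nonzero products are exactly the $\lambda_{E^-}$ of Theorem~\ref{thm:Acase}, each with multiplicity $\prod_{i\in E^-}p_i$; this uses $\prod_i\varphi(p_i^{u_i-1})=\varphi(|G|/(p_1\cdots p_k))$. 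They contribute the eigenvalues $d-\lambda_{E^-}$ in the first column. The number of these nonzero eigenvalues, counted with multiplicity, is $\sum_{E^-}\prod_{i\in E^-}p_i=\prod_i(1+p_i)=\sigma(p_1\cdots p_k)$. All remaining eigenvalues of $A(\Delta(G))$ are $0$, and they give the eigenvalue $d$ of $L$ with multiplicity $|G|-|\I(G)|-\sigma(p_1\cdots p_k)$, which is the middle column.

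The main obstacle is the eigenvalue $0$ of $L$, because two sources overlap. Taking $E^-=\emptyset$ gives $\lambda_\emptyset=\prod_i p_i\varphi(p_i^{u_i-1})=d$, so $d-\lambda_\emptyset=0$. This value appears inside the first column, so the entry "$|\I(G)|+1$" in the last column should be read as counting these zeros together. For this I would check that $\Delta(G)$ is connected, so that $0$ has multiplicity exactly $1$ in $L(\Delta(G))$. Each factor $\Delta(S_{p_i})$ is a complete multipartite graph with at least $3$ parts, so it is connected and contains a triangle. Theorem~\ref{thm:kcon}, applied inductively, then shows the product is connected. Alternatively, $d$ is a simple eigenvalue of $A(\Delta(G))$, since $\lambda_{E^-}=d$ forces $E^-=\emptyset$. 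Either way, $0$ has total multiplicity $|\I(G)|+1$.

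I would note in the write-up that the $E^-=\emptyset$ entry of the first column is this same zero, so that the column sums come out to $|G|$.
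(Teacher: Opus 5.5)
Your proposal is correct and follows the paper's proof. You split off the isolated vertices as a zero block, use $d$-regularity to write $L(\Delta(G))=dI-A(\Delta(G))$, translate the Kronecker-product adjacency spectrum of Theorem~\ref{thm:Acase}, and let the $E^-=\emptyset$ term supply the extra zero. Your additional checks make explicit two points the paper leaves implicit: that $d$ is a simple eigenvalue of $A(\Delta(G))$ (equivalently, that $\Delta(G)$ is connected), and that the $E^-=\emptyset$ entry of the first column is the same zero already counted in the last column.
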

    \begin{proof}
      By definition $L=D(G)-A.$ We can write it in the block form as follows: 
     $$L=\begin{bmatrix}L(\Delta(G))&\bf 0\\\bf 0&\bf 0\end{bmatrix},$$
     where $0$ is the zero matrix corresponding to the isolated vertices and $\Delta(G)$ is the largest connected component. 
     Since the graph $\Delta(G)$ is $d$-regular, where $d=r_1r_2\cdots r_k,$ $r_i=p_i\alpha_i,$ and $\alpha_i=\phi(p_i^{u_i-1}),$  $L(\Delta(G))=dI-A(\Delta(G)).$ Let $\lambda_i$  be an eigenvalue of $A(\Delta(G)).$ Then, $d-\lambda_i$ are the corresponding eigenvalues of $L(\Delta(G)).$ So, Theorem~\ref{thm:Acase} gives the required expression of all the eigenvalues of $L$. Note that if we consider $E^{-}$ empty, then the corresponding eigenvalue of $L(\Delta(G))$ is zero with multiplicity 1. Thus, we will have exactly $|\I(G)|+1$ zero eigenvalues of $L.$
        \end{proof}

 \begin{rem} 
  Let $G$ has at least one cyclic Sylow $p$-subgroup. 
    We can write $\Ga(G)=H\,\otimes\, \widehat\Ga(\Z_s),$
    where $H=\otimes_{i=1}^{j}\Ga(S_{p_i}).$ Note that $\Delta(H)$ is $d=r_1r_2\cdots r_j$ regular graph. It is clear that, $\widehat \Ga(\Z_s)$ is not a regular graph because $\deg(0)\neq \deg(1).$  We denote $S(L(\widehat\Ga(\Z_s)))=\{u_1,u_2,\dots ,u_s\},$ as the multiset of eigenvalues of $L(\widehat\Ga(\Z_s)).$
    Then we get a subset $\{du_1,du_2,\dots , du_s\}$ of the set of eigenvalues of $L$ (see Theorem~\ref{thm:klap}). 
 \end{rem}

 Let $S(A(\Delta(H)))=\{\lambda_1,\lambda_2,\dots,\lambda_{|V(\Delta(H))|}\}$ denote the multiset of the eigenvalues of the adjacency matrix of $\Delta(H). $  The following result gives the expression for the complete spectrum of $L(\Delta(G))$ in terms of the eigenvalues of the adjacency matrix of $\Delta(G).$ 
    \begin{theorem}\label{thm:lap}
Let $G$ be a group and has at least one cyclic Sylow $p$-subgroup. Then
            \[ S\left(L(\Delta(G))\right)
               = \bigcup_{i=1}^{|V(\Delta(H))|} 
                  S\left(d \widehat{D} - \lambda_i \widehat{A}\right), \]
            where $d=r_1r_2\cdots r_j,$ 
            $\widehat{D}$ is the vertex degree matrix of $\widehat{\Gamma}(\mathbb{Z}_s)$ and  
            $\widehat{A}$ is the adjacency matrix of $\widehat{\Gamma}(\mathbb{Z}_s)$.\index{Spectrum!for $\Z_n\times \Z_m$}
     \end{theorem}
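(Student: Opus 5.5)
We write $\Delta(G)=\Delta(H)\otimes\widehat\Gamma(\mathbb{Z}_s)$ (Remark~\ref{rem:decompose}), with $\Delta(H)=\otimes_{i=1}^{j}\Delta(S_{p_i})$. Here $\Delta(H)$ is $d$-regular with $d=r_1\cdots r_j$ (Corollary~\ref{cor:case1}), and $\widehat\Gamma(\mathbb{Z}_s)$ is a looped graph on $s$ vertices with no isolated vertices. Let $A_H=A(\Delta(H))$ and $N=|V(\Delta(H))|$. By Theorem~\ref{thm:kadd}, which carries over to the looped convention, the adjacency matrix of $\Delta(G)$ is $A_H\otimes\widehat A$. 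By Proposition~\ref{pro:per}(2), which also holds under the loop convention, $\deg((u,v))=d\cdot\deg_{\widehat\Gamma(\mathbb{Z}_s)}(v)$. Hence the degree matrix of $\Delta(G)$ is $I_N\otimes d\widehat D$, and
\[
L(\Delta(G)) \;=\; I_N\otimes d\widehat D \;-\; A_H\otimes \widehat A .
\]
We must be careful with the degree of a vertex $v$ carrying a loop. In this paper's convention a loop contributes $1$ to the degree, so $\widehat D$ is exactly the row-sum matrix of $\widehat A$. The diagonal entry $1$ of $\widehat A$ then enters the product degree correctly, since $(u,v)\sim(u',v)$ exactly when $uu'\in E(\Delta(H))$.

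Next we block-diagonalize. Since $A_H$ is real symmetric, there is an orthogonal matrix $P$ with $P^{T}A_HP=\mathrm{diag}(\lambda_1,\dots,\lambda_N)$. Conjugating by the orthogonal matrix $P\otimes I_s$ and using the mixed-product rule $(P\otimes I)(X\otimes Y)(P^T\otimes I)=(PXP^T)\otimes Y$ gives
\[
(P\otimes I_s)^{T}L(\Delta(G))(P\otimes I_s)= I_N\otimes d\widehat D-\mathrm{diag}(\lambda_1,\dots,\lambda_N)\otimes\widehat A=\bigoplus_{i=1}^{N}\bigl(d\widehat D-\lambda_i\widehat A\bigr).
\]
The spectrum of a block-diagonal matrix is the multiset union of the spectra of its blocks. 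This gives $S(L(\Delta(G)))=\bigcup_i S(d\widehat D-\lambda_i\widehat A)$, as claimed.

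The only real obstacle is bookkeeping rather than mathematics. We must ensure that the matrices are indexed compatibly, with vertex $(u,v)$ ordered lexicographically so that the Kronecker structure matches. We must also confirm that the degree matrix genuinely factors as $I\otimes d\widehat D$. This uses the regularity of $\Delta(H)$ in an essential way: for non-regular $H$ one would get $D_H\otimes\widehat D$, which does not commute with $A_H\otimes \widehat A$ in general, and the argument fails. As a sanity check, taking $\lambda_i=d$ (the Perron eigenvalue) gives the block $d(\widehat D-\widehat A)=d\widehat L$. Its spectrum $\{du_1,\dots,du_s\}$ recovers the preceding remark and Theorem~\ref{thm:klap}.
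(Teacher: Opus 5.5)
Your proposal is correct and is essentially the paper's own argument. Both write $L(\Delta(G)) = dI\otimes\widehat D - A(\Delta(H))\otimes\widehat A$ using the $d$-regularity of $\Delta(H)$, conjugate by the orthogonal matrix $P\otimes I$ that diagonalizes $A(\Delta(H))$, and read off the spectrum from the block-diagonal matrix with blocks $d\widehat D-\lambda_i\widehat A$; your explicit check that the loop convention makes $\widehat D$ the row-sum matrix of $\widehat A$ (so the degree matrix genuinely factors as $I\otimes d\widehat D$) is a detail the paper leaves implicit.
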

    \begin{proof}
        We consider the connected component $\Delta(G)$ of $\Gamma(G)$. By 
     Remark~\ref{rem:decompose}, $\Delta(G) = \Delta(H)\otimes\widehat{
        \Gamma}(\mathbb{Z}_s)$, and so
        $$L(\Delta(G)) = D(\Delta(H))\otimes\widehat{D} - A(\Delta(H))
        \otimes\widehat{A},$$
        where $\widehat{A}$ and $\widehat{D}$ denote the adjacency and degree 
        matrices of $\widehat{\Gamma}(\mathbb{Z}_s)$ respectively. Since 
        $A(\Delta(H))$ is symmetric, there exists an orthogonal matrix $P$ 
        such that $PA(\Delta(H))P^T = \Lambda_H$, where $\Lambda_H = 
        \mathrm{diag}(\lambda_1,\ldots,\lambda_{|V(\Delta(H))|})$. Setting 
        $M = P\otimes I$, which is orthogonal since $P$ is, we have $M^{-1} 
        = M^T$, so $L(\Delta(G))$ and $M^TL(\Delta(G))M$ are similar and 
        share the same eigenvalues. Computing:
        \begin{align}\label{eq:abbound}
        M^TL(\Delta(G))M = M^T(D(\Delta(H))\otimes\widehat{D})M - 
        M^T(A(\Delta(H))\otimes\widehat{A})M.
        \end{align}
    
        Applying the mixed product property of the Kronecker product, 
        $(X\otimes Y)(U\otimes V) = XU\otimes YV$, to the right hand side 
        of Equation~\eqref{eq:abbound} with $M = P\otimes I$, we get
    \begin{align*}
        M^T(D(\Delta(H))\otimes \widehat D)M&=(P\otimes I)^T(D(\Delta(H))\otimes \widehat D)(P\otimes I)\\
        &=P^TD(\Delta(H))P\otimes \widehat D\\
        &=D(\Delta(H))\otimes \widehat D=d I\otimes \widehat D,
    \end{align*}
    \begin{align*}
      \text{and}\quad  M^T(A(\Delta(H))\otimes \widehat A)M&=(P\otimes I)^T(A(\Delta(H))\otimes \widehat A)(P\otimes I)\\
        &=P^TA(\Delta(H))P\otimes \widehat A=\Lambda_H\otimes \widehat A\\
        &=\begin{bmatrix}
            \la_1\widehat A&\bf 0&\dots &\bf 0\\
            \bf 0&\la_2\widehat A&\dots &\bf 0\\
            \vdots&\vdots&\ddots&\vdots\\
        \bf 0&\bf 0&\dots&\la_{|V(\Delta(H))|}\widehat A
        \end{bmatrix}.
    \end{align*}
    On combining the expressions, we get
    $$M^TL(\Delta(G)) M=\begin{bmatrix}
        d\widehat D-\la_1\widehat A&\bf 0&\dots &\bf 0\\
    \bf 0&d\widehat D-\la_2\widehat A&\dots &\bf 0\\
    \vdots&\vdots&\ddots&\vdots\\
    \bf 0&\bf 0&\dots&d\widehat D-\la_{|V(\Delta(H))|}\widehat A \end{bmatrix}.$$
    Thus $S(L(\Delta(G)))=S(M^TL(\Delta(G))M)=\bigcup\limits_{i=1}^{|V(\Delta(H))|}S(d \widehat D-\la_i\widehat A).$
    \end{proof}

\subsection*{Availability of data and materials} Data sharing does not apply to this article as no data sets
were generated or analysed during the current study.
\subsection* {Competing interests} The author declare that they have no competing interests.

\addcontentsline{toc}{section}{Bibliography}
\bibliographystyle{plain}
\bibliography{Abelian}

 \end{document}